\documentclass[a4paper,10pt,reqno]{amsart}
\usepackage[amsart]{optional}

\usepackage{amssymb, amsmath, amsthm, amsfonts, comment}
\usepackage{bbm}
\usepackage[colorlinks]{hyperref}
\usepackage{tikz-cd}
\usetikzlibrary{matrix,arrows}
\usepackage{float}
\usepackage{setspace} 

\newcommand\itemindentcorrection{0pt}

\opt{amsart}{
\setcounter{tocdepth}{2}
\let\oldtocsection=\tocsection
\let\oldtocsubsection=\tocsubsection
\let\oldtocsubsubsection=\tocsubsubsection
\renewcommand{\tocsection}[3]{\hspace{0em}\oldtocsection{#1}{#2}{#3}}
\renewcommand{\tocsubsection}[3]{ \hspace{1em} \oldtocsubsection{#1}{\small{#2}}{\small{#3}} }
\renewcommand{\tocsubsubsection}[3]{\hspace{2em}\oldtocsubsubsection{#1}{\small{#2}}{\small{#3}}}
}

\newcommand{\quotes}[1]{\textquoteleft{#1}'}
\newcommand{\set}[1]{\left\{{#1}\right\}}

\newcommand{\setconds}[2]{\ensuremath{\left\{\; #1\ :\ #2 \;\right\}}}
\newcommand{\setcondsbig}[2]{\ensuremath{\big\{\; #1\ :\ #2 \;\big\}}}

\newcommand{\quotstack}[2]{\ensuremath{\left[\,#1\,\middle/\,#2\,\right]}}
\newcommand{\quotstackinline}[2]{\ensuremath{[\,#1\,/\,#2\,]}}
\newcommand{\quotstackbig}[2]{\ensuremath{\big[\,#1\,\big/\,#2\,\big]}}
\newcommand{\quotstackBig}[2]{\ensuremath{\Big[\,#1\,\Big/\,#2\,\Big]}}
\newlength\myWindowLength
\newcommand{\WindowGens}[3]{\; #1\ :\ l\in \left[0,#2\right),\; m\in \left[0,#3\right)\,}
\newcommand{\WindowBox}[3]{\settowidth{\myWindowLength}{$\WindowGens{#1}{#2}{#3}$}
\begin{minipage}[c][15pt][c]{\myWindowLength}{$\WindowGens{#1}{#2}{#3}$}\end{minipage}}
\newcommand{\WindowSet}[3]{\left\{ \WindowBox{#1}{#2}{#3} \right\}}
\newcommand{\Window}[3]{\left\langle \WindowBox{#1}{#2}{#3} \right\rangle}

\newcommand\xyhook{\ar@{^{(}->}}
\newcommand\Z{\mathbb Z}
\newcommand\C{\mathbb C}
\newcommand\A{\mathbb A}

\newcommand\id{\operatorname{id}}

\newcommand\into{\hookrightarrow}
\newcommand\To{\longrightarrow}

\DeclareMathOperator \Hom {Hom}
\renewcommand\hom{\mathcal{H}om }
\DeclareMathOperator\Ext{Ext}
\newcommand\RDerived{\mathrm{R}}
\DeclareMathOperator \RHom {RHom}
\renewcommand\Im{\operatorname{Im}}
\renewcommand\P{\mathbb P}
\newcommand{\Gr}{\mathbb{G}\mathrm{r}}
\newcommand{\Pf}{\mathbb{P}\mathrm{f}}

\newcommand\Perf{\operatorname{Perf}}
\newcommand\rk{\operatorname{rank}}

\newcommand{\cE   }{\mathcal{E}}
\newcommand{\cF}{\mathcal{F}}
\newcommand{\cN}{\mathcal{N}}
\newcommand{\cO}{\mathcal{O}}

\newcommand{\cG}{\mathcal{G}}

\newcommand\mfX{\mathfrak X}

\newcommand{\Tot}{\operatorname{Tot}}
\newcommand{\Sym}{\operatorname{Sym}}
\newcommand{\Wedge}{\mbox{\large{$\wedge$}}}

\newcommand{\Crit}{\operatorname{Crit}}
\newcommand{\otherl}{l'}
\newcommand{\otherm}{m'}
\newcommand{\SymOtherl}{\Sym^{\otherl}\!}
\newcommand{\stackProjection}[1]{q_{#1}}
\DeclareMathOperator \SL {SL}
\DeclareMathOperator \GL {GL}

\newcommand \Caldararu {C\u{a}ld\u{a}\-raru}

\newlength\tempWidth
\newcommand\InSpaceOf[2]{
   \settowidth{\tempWidth}{$#1$}\phantom{#1}\hspace{-\tempWidth}{#2}}

\newcommand\Br{\mathcal{BB}r}
\newcommand\pr{\phi}
\newcommand\D{\mathrm{D^b}}
\newcommand\QCoh{\mathrm{QCoh}}
\newcommand\QCohdg{\mathrm{QCoh_{dg}}}
\newcommand\QCohdgnv{\mathrm{QCoh_{dg}^{nv}}}
\newcommand\maxiso{M}
\newcommand\maxisoGenl{M}
\newcommand\opensetinP{U}
\newcommand\opensetinY{Y'}
\newcommand\Schur{\mathbb{S}}
\newcommand\stackFibre{\mathfrak{F}}
\newcommand\twoform[1]{\omega_{#1}}

\numberwithin{equation}{section}

\theoremstyle{plain}
\newtheorem{thm}{Theorem}[section]
\newtheorem{prop}[thm]{Proposition}
\newtheorem{lem}[thm]{Lemma}
\newtheorem{cor}[thm]{Corollary}

\theoremstyle{remark}
\newtheorem{rem}[thm]{Remark}
\newtheorem*{acks}{Acknowledgements}

\theoremstyle{definition}
\newtheorem{defn}[thm]{Definition}
\newtheorem{assm}[thm]{Assumption}
\newtheorem{term}[thm]{Terminology}
\newtheorem{eg}[thm]{Example}

\newcommand{\pgap}{\vspace \baselineskip}




\setlength{\marginparwidth}{0.75in}
\newcommand{\marginparstretch}{0.6}
\let\oldmarginpar\marginpar
\renewcommand\marginpar[1]{\-\oldmarginpar[\framebox{\setstretch{\marginparstretch}\begin{minipage}{\marginparwidth}{\raggedleft\scriptsize #1}\end{minipage}}]{\framebox{\setstretch{\marginparstretch}\begin{minipage}{\marginparwidth}{\raggedright\scriptsize #1}\end{minipage}}}}


\begin{document}

\title{The Pfaffian-Grassmannian equivalence revisited}

\opt{amsart}{
\author{Nicolas Addington}
\address{Nicolas Addington, Department of Mathematics, Box 90320, Duke University, Durham, NC, 27708-0320, U.S.A.}
\email{adding@math.duke.edu}
\author{Will Donovan}
\address{Will Donovan, Kavli Institute for the Physics and Mathematics of the Universe (WPI), Todai Institutes for Advanced Study, The University of Tokyo, 5-1-5 Kashiwanoha, Kashiwa, Chiba, 277-8583, Japan}
\email{will.donovan@ipmu.jp}
\author{Ed Segal}
\address{Ed Segal, Department of Mathematics, Imperial College London, London, SW7 2AZ, U.K.}
\email{edward.segal04@imperial.ac.uk}
}
\opt{alggeom}{
\author{Nicolas Addington}
\email{adding@math.duke.edu}
\address{Department of Mathematics, Box 90320, Duke University, Durham, NC, 27708-0320, U.S.A.}
\author{Will Donovan}
\email{will.donovan@ipmu.jp}
\address{Kavli Institute for the Physics and Mathematics of the Universe (WPI), Todai Institutes for Advanced Study, The University of Tokyo, 5-1-5 Kashiwanoha, Kashiwa, Chiba, 277-8583, Japan}
\author{Ed Segal}
\email{edward.segal04@imperial.ac.uk}
\address{Department of Mathematics, Imperial College London, London, SW7 2AZ, U.K.}
}

\opt{alggeom}{
\thanks{We thank Matt Ballard for helpful discussions, Ronen Plesser for helpful comments, and the anonymous referee for a thorough reading and many useful suggestions.  N.A. and W.D. are grateful for the hospitality of the Hausdorff Research Institute for Mathematics, Bonn.  W.D.~is grateful for the support of Iain Gordon and EPSRC grant no.~EP/G007632/1, and of the World Premier International Research Center Initiative (WPI Initiative), MEXT, Japan. W.D.~also thanks Paul Aspinwall for supporting a visit to Duke University under NSF grant no.~DMS-0905923.  E.S.~is supported by an Imperial College Junior Research Fellowship.}
}

\begin{abstract}
We give a new proof of the `Pfaffian-Grassmannian' derived equivalence between certain pairs of non-birational Calabi--Yau threefolds. Our proof follows the physical constructions of Hori and Tong, and we factor the equivalence into three steps by passing through some intermediate categories of (global) matrix factorizations. The first step is global Kn\"orrer periodicity, the second comes from a birational map between Landau--Ginzburg B-models, and for the third we develop some new techniques.
\end{abstract}
\subjclass[2010]{Primary 14F05, 14J32, 18E30, 81T30; Secondary 14M15\opt{alggeom}{.}}

\maketitle
\opt{amsart}{\tableofcontents}


\section{Introduction}

The \quotes{Pfaffian-Grassmannian equivalence} refers to a relationship between two particular Calabi--Yau three-folds: $Y_1$, which is a linear section of the Grassmannian $\Gr(2,7)$, and $Y_2$, which is the dual linear section of the Pfaffian locus in $\P(\wedge^2 \C^7)$. The relationship was first conjectured by R\o dland \cite{Rodland}, who by studying their Picard--Fuchs equations observed that $Y_1$ and $Y_2$ appeared to have the same mirror. This means that the usual Conformal Field Theories with these target spaces should occur as different limit points in the K\"ahler moduli space of a single field theory. By itself this is a fairly common phenomenon; the special feature of this example is that $Y_1$ and $Y_2$ are (provably) not birational to one another. This was the first example with this property, and such examples remain extremely rare.

  If we pass to the B-twist of this theory, this picture implies that the B-models defined on $Y_1$ and $Y_2$ are isomorphic, and in particular that their categories of B-branes are equivalent. The category of B-branes on a variety is the derived category of coherent sheaves, so this suggests that we should have a  derived equivalence
\begin{equation}\label{eqn.equivalence1}\D(Y_1) \cong \D(Y_2). \end{equation}
This is a precise mathematical prediction, and it was proven by Borisov and \Caldararu\ \cite{BorCal}, and independently by Kuznetsov \cite{KuznetsovHPDlines} using his broader program of Homological Projective Duality.  

Around the same time as these proofs of \eqref{eqn.equivalence1} appeared, Hori and Tong \cite{HT} wrote an important physics paper that gave an argument for R\o dland's full conjecture,  by constructing the necessary field theory containing $Y_1$ and $Y_2$ in its K\"ahler moduli space. The theory is a Gauged Linear Sigma Model (GLSM), which is a standard idea, but the gauge group is non-abelian, and furthermore the argument that $Y_2$ occurs as a limit relies on some very original analysis of non-perturbative effects.

In this paper we give a new mathematical proof of the derived equivalence \eqref{eqn.equivalence1}, inspired by the ideas of Hori and Tong. In particular we find that this derived equivalence is at heart a birational phenomenon, but the birationality is between two Landau--Ginzburg models
$$\begin{tikzcd} (X_1, W) \arrow[dashed]{r} & \arrow[dashed]{l} (X_2, W). \end{tikzcd}$$
Here $X_1$ and $X_2$ are larger spaces containing $Y_1$ and $Y_2$, and $W$ is a holomorphic function defined on both of them. The space $X_1$ is a variety and $Y_1$ is the critical locus of $W$ in $X_1$, so this we can analyze by standard techniques. However, on the other side we encounter two rather novel phenomena:
\begin{itemize}
\item The space $X_2$ is not a variety; it's an Artin stack.  It seems that the category of B-branes on an Artin stack is \emph{not} the same as the derived category, indeed the correct definition of this category is not known in general.\footnote{See Section~\ref{sect:without} for more discussion of this point.}
\item The subspace $Y_2 \subset X_2$ is not the critical locus of $W$. 
\end{itemize}
We develop new mathematical ideas to handle these phenomena, which very roughly parallel the new physics in \cite{HT}. 

The importance of abelian GLSMs is now fairly widely understood in the mathematics literature, since they are closely connected to toric varieties and complete intersections therein. However, we are only just beginning to understand the world of non-abelian GLSMs. We hope that the perspective and techniques of this paper will encourage others to explore it further.
 
For the remainder of this introduction we explain the constructions that we're going to use, and give an outline of the ideas involved in the proof.

\subsection{Calabi--Yau three-folds}\label{sect:geometric}

Let $V$ be a 7-dimensional complex vector space, and fix a surjective linear map
\[ A\colon \wedge^2 V \to V. \]
From these data we will build two different Calabi--Yau 3-folds:

\begin{list}{$Y_{\arabic{enumi}}$: }{\usecounter{enumi} \setlength\itemsep{5pt} }
\item We consider the Grassmannian
$$\Gr(2, V) \subset \P(\wedge^2 V)$$
in its Pl\"ucker embedding.  Intersecting it with the 7 hyperplanes given by the kernel of $A$, we obtain the first Calabi--Yau 3-fold $Y_1$.

\item We consider the projective space $\P(\wedge^2 V^\vee)$ of 2-forms on $V$. Thinking of a 2-form as an antisymmetric matrix we see that its rank must always be even, so generically the rank is $6$. The Pfaffian locus
$$\Pf(V) \subset \P(\wedge^2 V^\vee)$$
is where the rank drops to 4 or less.  Intersecting this with the linear $\P^6$ given by the image of $A^\vee$, we obtain the second Calabi--Yau 3-fold $Y_2$.
\end{list}

\begin{assm} \label{assm:generic}
We choose $A$ generically enough that the codimension-7 space $\P(\ker A) \subset \P(\wedge^2 V)$ is transverse to $\Gr(2,V)$, so $Y_1$ is smooth.  The smoothness of $Y_2$ is slightly more delicate, since $\Pf(V)$ has singularities along the locus where the rank drops to 2, i.e.\ along $\Gr(2,V^\vee) \subset \Pf(V)$.  But in fact if $\P(\ker A)$ is transverse to $\Gr(2,V)$ then $\P^6 = \P(\Im A^\vee)$ avoids this singular locus and is transverse to the smooth locus of $\Pf(V)$, so $Y_2$ is smooth.  This follows from the fact that $\Pf(V)$ is the (classical) projective dual of $\Gr(2,V)$; details are given in \cite[\S\S 1--2 and especially Cor.~2.3]{BorCal}.\footnote{We are referring to Borisov--\Caldararu\ only for this geometric fact, which does not depend on their proof that $\D(Y_1) \cong \D(Y_2)$.  Our proof of the latter is independent of theirs.}
\end{assm}

\subsection{Relation with the Hori--Tong GLSM}\label{sect:geometricGLSM}

Now we can explain our interpretation of Hori and Tong's construction. Let $S$ be a 2-dimensional complex vector space, and consider the linear Artin stack
$$\mfX = \quotstack{\Hom(S,V)\oplus \Hom( V, \wedge^2 S)}{\GL(S)}.$$
Notice that $\GL(S)$ acts trivially on the determinant of the vector space underlying $\mfX$, so $\mfX$ is a Calabi--Yau stack. 

For Hori and Tong, these data specify a GLSM, which is a certain kind of 2-dimensional supersymmetric gauge theory. It's conformal because of the Calabi--Yau condition. The Lagrangian for this field theory contains a certain parameter $\tau$ (the complexified FI parameter) which is essentially the K\"ahler modulus. The two limits $|\tau|\gg 1$ and $|\tau|\ll 1$ roughly correspond to the two possible GIT quotients of $\mfX$.

\begin{enumerate}
\item In the first limit $|\tau|\gg 1$, we choose a stability condition consisting of a positive character of $\GL(S)$. The unstable locus is where $x$ does not have full rank, and the GIT quotient is the variety
$$X_1 = \Gr(2, V)\times_{\GL(S)} \Hom( V, \wedge^2 S).  $$
This is the total space of the vector bundle $\cO(-1)^{\oplus 7}$ over $\Gr(2, V)$.\footnote{Here and throughout we use the convention that $\cO(-1):=\det(S)=\wedge^2 S$.}  In this limit, the GLSM is  expected to reduce to a sigma model with target $X_1$.

\item Now we look at the other stability condition $|\tau|\ll 1$, where we choose a negative character of $\GL(S)$. At this point we have to be careful about our definition of the GIT quotient. Conventionally, one deletes the unstable locus, then takes the scheme-theoretic quotient of the remaining semi-stable locus. For our purposes this is too destructive, and we will instead take the \emph{stack-theoretic} quotient of the semi-stable locus.\footnote{In fact this is now quite a standard thing to do, particularly if the resulting quotient stack is only an orbifold.} For this stability condition the only unstable points are the locus $p=0$, so we consider the complement
$$X_2 := \set{p\neq 0} \subset \mfX.$$

This space $X_2$ is an Artin stack; we can think of it as a bundle over
$$\P\Hom(V,\wedge^2 S) \cong \P^6$$
 whose fibres are the stacks
$$\quotstackbig{\Hom(S,V)}{\SL(S)}.$$
The classical GIT quotient is the scheme underlying $X_2$: this is singular, and we'll make no use of it. It appears that the Artin stack $X_2$ is the correct space to consider in the $|\tau| \ll 1$ limit. In physics terminology, the gauge group has been broken only to a continuous subgroup. Notice that since the stack $\mfX$ is Calabi--Yau, so too are the open substacks $X_1$ and $X_2$.
\end{enumerate}

The GLSM has another ingredient, known as the `superpotential'. This is the (invariant) function $W$ on $\mfX$ defined by
\begin{equation}\label{eq:W} W(x,p) =  p\circ A\circ \wedge^2 x. \end{equation}
Here $x\in \Hom(S,V) $ and $p\in \Hom(V, \wedge^2 S)$, and $A$ is our fixed linear map from above. We can restrict $W$ to either $X_1$ or $X_2$: the three pairs $(\mfX, W)$, $(X_1, W)$ and $(X_2, W)$ then all define \emph{Landau--Ginzburg B-models} (see \S \ref{sect:categories}).  

The most important thing about a Landau--Ginzburg model is the critical locus of the superpotential $W$. We now indicate how an analysis of this locus for the Landau--Ginzburg B-models $(X_1, W)$ and $(X_2, W)$ will allow us to recover the Calabi--Yau three-folds $Y_1$ and $Y_2$ from the previous Section~\ref{sect:geometric}.

\begin{enumerate}
\item In the case of the pair $(X_1, W)$, we claim that the critical locus of $W$ is exactly our Grassmannian Calabi--Yau $Y_1$. To see this, pick a basis for $V$, so $A$ defines 7 sections $a_1,\ldots,a_7$ of $\cO(1)$ on $\Gr(2,V)$, which we can pull up to $X_1$.  On $X_1$ we also have 7 tautological sections 
$p_1,\ldots,p_7$ of the pullback of $\cO(-1)$, and the superpotential is 
$$W = \sum_{i=1}^7  a_i p_i.$$
Because $A$ is generic, the critical locus of this function is the set \opt{amsart}{\[\set{a_i=p_i=0, \,\forall i},\]}\opt{alggeom}{$\set{a_i=p_i=0, \,\forall i},$} which by definition is $Y_1\subset \Gr(2,V)$.

\item Now consider the pair $(X_2, W)$. If we fix a point $[p] \in \P\Hom(V, \wedge^2 S)$, then $W$ restricts to give a quadratic form $W_p$ on the fibre $\Hom(S,V)$.  The rank of this quadratic form is twice that of the (antisymmetric) form $p\circ A$.  So the Pfaffian Calabi--Yau $Y_2$ is the locus of points $p$ where the quadratic superpotential $W_p$ on the fibre drops in rank. As we shall see in Section~\ref{sect:pfaffian}, this is contained in (but not equal to) the critical locus of $W$.
\end{enumerate}

\subsection{Outline of proof}\label{sect:outline}
 Associated to any Landau--Ginzburg B-model $(Y,W)$ there is a category, which we denote $\D(Y,W)$, whose objects are \quotes{twisted complexes} or \quotes{global matrix factorizations}. In the special case when $W\equiv 0$, this category is the usual derived category of coherent sheaves $\D(Y)$. We will prove the derived equivalence \eqref{eqn.equivalence1} as a composition of three equivalences, as follows:
\begin{equation}\label{threeequivs}
\begin{tikzcd}[column sep=0em,row sep=3.5em]
\D(X_1, W) \arrow{rr}{\Psi_2}[below]{\sim} & & \Br(X_2,W) & \subset & \D(X_2, W)  \\
\D(Y_1)\ar{u}{\Psi_1}[below,rotate=90]{\sim} & \hspace{3em} & \D(Y_2) \ar{u}[swap]{\Psi_3}[above,rotate=90]{\sim}
\end{tikzcd}\end{equation}
Let's say a few words about each step.

\begin{list}{$\Psi_{\arabic{enumi}}$: }{\usecounter{enumi} \setlength\itemsep{5pt} }

\item This step is well-known to experts; it is a generalization of Kn\"orrer periodicity which has been proved several times over in recent years. We explain this step in Section~\ref{sect:knorrer}.

\item Let's forget about $W$ momentarily, and also forget that $X_2$ is an Artin stack. Since they are related by variation of GIT, $X_1$ and $X_2$ are birational Calabi--Yau spaces.  Kawamata and Bondal--Orlov have conjectured that any two birational Calabi--Yau's are derived equivalent, and this is known to be true in many cases.  Putting $W$ back in, a more general conjecture is that birational Calabi--Yau Landau--Ginzburg models have equivalent categories of global matrix factorizations.\footnote{In fact this should follow fairly easily from the $W=0$ case.} 

 However, our $X_2$ is actually an Artin stack. This complicates things, and in fact $\D(X_2)$ is much bigger than $\D(X_1)$. However, as we shall see, we can construct a fully faithful embedding from $\D(X_1)$ into $\D(X_2)$. We denote its image by $\Br(X_2)$, and we postulate that this is the correct category of B-branes for the stack $X_2$.  

When we put $W$ back in we have a corresponding equivalence from $\D(X_1, W)$ to a certain subcategory $\Br(X_2, W)\subset \D(X_2, W)$. We will explain this step in Section~\ref{sect:windows}.
 
\item For Hori and Tong, this is the stage that requires the most novel arguments, and the same is true for us. We use a variation on the Kn\"orrer periodicity argument (as in step~1) to construct an embedding of $\D(Y_2)$ into $\D(X_2,W)$, and show that the  image is the subcategory $\Br(X_2,W)$.  We explain this step in Section~\ref{sect:pfaffian}.
\end{list}

\begin{rem} It would be nice to compare our derived equivalence to the ones found by Borisov--\Caldararu\ and Kuznetsov; unfortunately we do not know how to do this.
\end{rem}

\begin{rem} It may be helpful to compare what we do here to the proof of the `Calabi--Yau/Landau--Ginzburg correspondence' for B-branes presented in \cite{Segal} and \cite{Shipman}. The goal of that project was similarly to re-prove a known equivalence (due to Orlov \cite{OrlovCohSing}) using methods that were more faithful to the original physical arguments. 

Orlov's result is the equivalence
$$\D(Y) \cong \D\big(\quotstack{\C^n}{\Z_n}, f\,\big)$$
where $f$ is a degree $n$ polynomial in $n$ variables, and $Y\subset \P^{n-1}$ is the corresponding Calabi--Yau hypersurface. In the new proof the equivalence is factored into two steps, by considering an abelian gauged linear sigma model
$$  \quotstack{\C^{n+1}}{\C^*} $$
with the superpotential $W=fp$, where $\C^*$ acts with weights $(1,1,\ldots,1,-n)$ and $p$ is the last coordinate. There are two GIT quotients:  the first one is the total space of the canonical bundle $K_{\P^{n-1}}$, and the first step is to prove an equivalence
$$\D(Y) \cong \D\big(K_{\P^{n-1}}, W\big).$$
This follows from a `global Kn\"orrer periodicity' theorem, and we will use exactly the same theorem to deduce our equivalence $\Psi_1$.

The second GIT quotient is the orbifold $\quotstackinline{\C^n}{\Z_n}$, and the second step is to prove an equivalence
$$D^b(K_{\P^{n-1}}, W) \cong D^b\big(\quotstack{\C^n}{\Z_n}, f\,\big).$$
We will extend the methods of this proof to prove our equivalence $\Psi_2$.

Note that there is no analogue of our third step $\Psi_3$ in this construction.
\end{rem}

\begin{rem}
Another previous body of work which is relevant is the study of the derived categories of intersections of quadrics, particularly as retold in \cite{ASS}. There one considers an abelian gauged linear sigma model
$$ \quotstack{\C^{3n}}{\C^*} $$
where the $\C^*$ acts with weight $1$ on the first $2n$ coordinates $x_1, \ldots, x_{2n}$, and with weight $-2$ on the last $n$ coordinates $p_1, \ldots, p_n$. We equip this with a superpotential
$$ W = \sum_{i=1}^n f_i p_i $$
where each $f_i$ is quadratic in the $x$ variables.  The first GIT quotient $X_1$ is the total space of $\cO(-2)^{\oplus n}$ over $\P^{2n-1}$, and global Kn\"orrer periodicity gives an equivalence
$$\D(X_1, W) \cong \D(Y_1)$$
where $Y_1\subset \P^{2n-1}$ is the Calabi--Yau formed by intersecting all the quadrics. The second GIT quotient $X_2$ is the total space of the (orbi-)vector bundle $\cO(-1)^{\oplus 2n}$ over the weighted projective space $\P^{n-1}_{2:2:\ldots:2}$, and one obtains an equivalence
$$\D(X_1, W)\cong \D(X_2, W)$$
by the same methods as before. So we've passed through two steps, which are essentially the same as those in the previous remark.

For the third step, we view $(X_2, W)$ as a family of LG B-models over $\P^{n-1}$, each of which is of the form $\left(\quotstackinline{\C^{2n}}{\Z_2}, W_p\right)$ for some quadratic form $W_p$.\footnote{This point of view is an analogue of the physicists' Born--Oppenheimer approximation.} Where $W_p$ is non-degenerate, Kn\"orrer periodicity tells us that the category of matrix factorizations on the fibre is equivalent to the derived category of 2 points, so generically $(X_2, W)$ looks like a double cover of $\P^{n-1}$. More careful analysis at the degenerate points reveals that $\D(X_2, W)$ is actually a non-commutative resolution of a ramified double cover of $\P^{n-1}$. 

Our equivalence $\Psi_3$ is partially based on the techniques of this third step.
\end{rem}

\begin{rem}
It is reasonable to ask what happens if we vary the dimensions of $S$ and $V$, giving them dimensions $r$ and $d$ respectively, say, and correspondingly adapt the definitions of $\mfX$, $X_1$, $X_2$ and $W$. This affects the three steps as follows:

\newlength\templabelwidth
\settowidth{\templabelwidth}{$\Psi_1$: }
\begin{list}{$\Psi_{\arabic{enumi}}$: }{\usecounter{enumi} \setlength\itemsep{5pt} \setlength\itemindent{-\templabelwidth} \addtolength\itemindent{-\itemindentcorrection}}
\item The definition of the first Calabi--Yau $Y_1$ also adapts immediately, and the equivalence $\Psi_1$ continues to hold, as it is a consequence of a much more general theorem. Of course $d$ must be big enough compared to $r$ for $Y_1$ to be non-empty.
\item If we keep $r=2$ and $d$ odd then the correct definition of $\Br(X_2, W)$ is clear and the equivalence $\Psi_2$ generalizes immediately. If we move beyond these cases then there are obvious guesses as to how to proceed mathematically (particularly when $r=2$ and $d$ is even), but we encounter an apparent discrepancy with the physical results; see Remark~\ref{rem:windowsinotherdims}.

\item This step is the most delicate, and the only other case that we can handle completely is $r=2, \,d=5$, which recovers the derived equivalence between an elliptic curve and its dual. In the case $r=2,\, d=6$, we can recover most of Kuznetsov's result on Pfaffian cubic 4-folds \cite{KuznetsovHPDlines}, and for $r=2,\, d>7$ our construction suggests a possible homological projective dual for $\Gr(2,d)$.  See Remark~\ref{rem:pfaffianforotherdimensions} for more details.

\end{list}
\end{rem}

\begin{rem}
More recently Hori has provided a second physical derivation of the Pfaffian-Grassmannian equivalence, using a dual model \cite{Hori}; see also \cite{HoriKnapp}. It would be very interesting to find a mathematical interpretation of this duality.
\end{rem}

\opt{amsart}{
\begin{acks}
We thank Matt Ballard for helpful discussions, Ronen Plesser for helpful comments, and the anonymous referee for a thorough reading and many useful suggestions.  N.A. and W.D. are grateful for the hospitality of the Hausdorff Research Institute for Mathematics, Bonn.  W.D.~is grateful for the support of Iain Gordon and EPSRC grant no.~EP/G007632/1, and of the World Premier International Research Center Initiative (WPI Initiative), MEXT, Japan. W.D.~also thanks Paul Aspinwall for supporting a visit to Duke University under NSF grant no.~DMS-0905923.  E.S.~is supported by an Imperial College Junior Research Fellowship.
\end{acks}
}

\section{Categories of matrix factorizations}\label{sect:categories}

In this section we recall some general background on `global' matrix factorizations.

\subsection{Landau--Ginzburg B-models and curved dg-sheaves}

We make the following definition.

\begin{defn} A \emph{Landau--Ginzburg (or LG) B-model} consists of:
\begin{itemize}
\item A smooth $n$-dimensional scheme (or stack) $X$ over $\C$.
\item A choice of function $W\in \Gamma_X(\mathcal{O}_X)$ (the \quotes{superpotential}).
\item An action of $\C^*$ on $X$ (the  \quotes{R-charge}).
\end{itemize}
We denote the above copy of $\C^*$ by $\C^*_R$. We require that:
\begin{enumerate}
\item $W$ has weight (\quotes{R-charge}) equal to 2.
\item $-1\in \C^*_R$ acts trivially.
\end{enumerate}
\end{defn}

We let $(X,W)$ denote a Landau--Ginzburg B-model, suppressing the R-charge data from the notation. In affine patches, $\cO_X$ is a graded ring (graded by R-charge, and concentrated in even degree), and $W$ is a degree 2 element. Such a thing is sometimes called  a `curved algebra'; it is a very special case of a curved $A_\infty$-algebra.

\begin{eg}\label{eg:schemeisLG}
Any (smooth) scheme $X$ defines a LG B-model, by setting $W\equiv 0$  and letting $\C^*_R$ act trivially. This is an important special case.
\end{eg}

\begin{eg}\label{eg.KPbasiceg}
Let $X=\C^2_{x,p}$ and $W=xp$. We let $\C^*_R$ act with weight zero on $x$ and weight 2 on $p$. This is a LG B-model, and it's the basic example to which Kn\"orrer periodicity applies (see Section~\ref{sect:KPoverapoint}).
\end{eg}

\begin{eg} \label{eg:specifyingRcharge}
The example we care about in this paper is the linear Artin stack
$$\mfX = \quotstackbig{\Hom(S,V)\oplus \Hom( V, \wedge^2 S)}{\GL(S)}$$
 introduced in Section~\ref{sect:geometricGLSM}. We've already specified the superpotential $W$ \eqref{eq:W}, but we need to also specify the R-charge, which we do letting $\C^*_R$ act on $\Hom(V, \wedge^2 S)$ with weight 2, and on $\Hom(S,V)$ with weight 0. These data define a Landau--Ginzburg B-model.

We also care about the open substacks $X_1, X_2\subset \mfX$. These have superpotentials given by the restriction of $W$, and each one is $\C^*_R$-invariant, so they define LG B-models. 
\end{eg}

We now give the appropriate notion of a sheaf on an LG B-model.

\begin{defn} A \emph{curved dg-sheaf} on $(X,W)$ is a sheaf $\cE$ of $\cO_X$-modules, equivariant with respect to $\C^*_R$, equipped with an endomorphism $d_\cE \colon \cE \to \cE $ of R-charge~1 such that
$$(d_\cE)^2 = W \cdot \id_\cE.$$
\end{defn}

Note that, in affine patches, $(\cE, d_\cE)$ is simply a graded module equipped with a `curved differential'.

\begin{term}We will call $(\cE, d_\cE)$ \textit{coherent} (resp.\ \textit{quasi-coherent}) if the underlying sheaf $\cE$ is coherent (resp.\ quasi-coherent). If $\cE$ is actually a finite-rank vector bundle, we will call $(\cE, d_\cE)$ a \textit{matrix factorization}.
\end{term}

We are primarily interested in matrix factorizations and coherent curved dg-sheaves.

\begin{rem}Notice that because $-1\in\C^*_R$ acts trivially on $X$, any curved dg-sheaf splits into `even' and `odd' eigensheaves
$$\cE = \cE_{\text{even}} \oplus \cE_{\text{odd}} $$
and the differential $d_\cE$ exchanges the two. There is a weaker definition of LG B-model where we neglect the R-charge and keep only this (trivial) $\Z/2$ action; this results in a $\Z/2$-graded category, whereas with R-charge we can construct a $\Z$-graded category.\end{rem}

There is a $\C^*_R$-equivariant line bundle on $X$ associated to any character of $\C^*_R$, and we denote these line bundles by $\cO[k]$. For any curved dg-sheaf $\cE$, we can shift the equivariant structure by tensoring with $\cO[k]$, and we denote the result by  $\cE[k]$.

\begin{rem}
Suppose that $W=0$ and $\C^*_R$ acts trivially, as in Example~\ref{eg:schemeisLG}. Then a curved dg-sheaf is precisely a complex of $\cO_X$-modules, and a matrix factorization is a 
bounded complex of vector bundles. In this case the shift functor $[1]$ is the usual homological shift.
\end{rem}

The following is a useful source of examples of curved dg-sheaves.

\begin{eg}\label{eg:skyscrapers}
Suppose $Z\subset X$ is a ($\C^*_R$-invariant) subvariety lying inside the zero locus of $W$. Consider the skyscraper sheaf $\cE=\cO_Z$, equipped with the zero endomorphism $d_{\cE}=0$. This defines a curved dg-sheaf, concentrated in even degree.
\end{eg}

\subsection{Categories of curved dg-sheaves}\label{sect:catsofcurveddgsheaves}

Now we discuss the morphisms between curved dg-sheaves. Let $(\cE, d_\cE)$ and $(\cF, d_\cF)$ be curved dg-sheaves, and let
$$\hom_X(\cE, \cF)$$ 
denote the usual sheaf of $\cO_X$-module homomorphisms between the underlying sheaves $\cE$ and $\cF$. This sheaf is $\C^*_R$-equivariant, and carries a 
differential given by the commutator of $d_\cE$ and $d_\cF$, so it is a curved dg-sheaf on the LG B-model $(X, 0)$. Its global sections
$$\Gamma_X\hom_X(\cE, \cF) $$
form a complex of vector spaces, graded by R-charge. Consequently, we can try to build a dg-category whose objects are matrix factorizations, or coherent curved dg-sheaves. Of course it would be naive just to use the chain complexes above for morphisms; we have to do some more work to define the dg-category correctly. There are essentially two approaches:
\begin{enumerate}
\item Take as objects all matrix factorizations, and as morphisms the complexes
$$\RDerived\Gamma_X\hom_X(\cE, \cF) $$
where $\RDerived\Gamma_X$ is a suitable monoidal functor that computes derived global sections. We may for example use Dolbeault resolutions, or \u{C}ech resolutions with respect to some fixed $\C^*_R$-invariant affine cover of $X$, if one exists. We denote the resulting dg-category by $\Perf(X,W)$.
\end{enumerate}

This was the approach adopted in \cite{Segal}. It is fairly concrete, but it has the major disadvantage
that we can only use matrix factorizations as objects -- in the ordinary derived category $\D(X)$ it would be very frustrating if we could only use locally-free resolutions of coherent sheaves and never the sheaves themselves. Consequently it is helpful to have a second, more technical approach. This was developed by Orlov \cite{OrlovNonaffine} and Positselski \cite{Positselski}.

\begin{enumerate}\addtocounter{enumi}{1}

\item Let $\QCohdgnv(X,W)$ denote the dg-category of quasi-coherent curved dg-sheaves, with morphisms defined `naively' as above. It is easy to check that this category contains mapping cones, so if we have a bounded chain-complex of curved dg-sheaves
$$\cE_\bullet = \ldots\to \cE_0 \to \cE_1 \to \cE_2\to \ldots $$
we can form the totalization Tot$(\cE_\bullet)$, and this is a curved dg-sheaf. We define a curved dg-sheaf to be \textit{acyclic} if it is (homotopy equivalent to) the totalization of an exact sequence.  Then we define $\QCohdg(X,W)$ to be the quotient (as a dg-category) of $\QCohdgnv(X,W)$ by the full subcategory of acyclic objects. Finally, we define $\Perf(X,W)$ to be the full subcategory of $\QCohdg(X,W)$ consisting of objects which are homotopy-equivalent to matrix factorizations.
\end{enumerate}

Fortunately these two approaches define quasi-equivalent dg-categories; this was proven by Shipman \cite[Prop.~2.9]{Shipman} for the case that $X$ is a scheme, but the argument works for quotient stacks without modification (see also \cite[Prop.~2.11]{LP} for a similar statement without R-charge). Equally, the choice of functor $\RDerived\Gamma_X$ in the first construction is not important.  From the second construction, it is clear that $\Perf(X,W)$ is pre-triangulated, i.e.\ it contains mapping cones. The shift functor acts by shifting R-charge equivariance, i.e.\ tensoring with $\cO[1]$. 

We denote the homotopy category of $\Perf(X,W)$ by $\D(X,W)$; this is a triangulated category. We'll adopt the convention that the set of morphisms between two objects in this category is the graded vector space
$$\Hom^\bullet_{D^b(X,W)}(\cE, \cF)$$
given by all homology groups of the chain-complex $\Hom^\bullet_{\Perf(X,W)}(\cE, \cF)$, not just the zeroeth homology. For the case $W=0$, this means we are using $\Hom^\bullet_{D^b(X)}(\cE, \cF)$ to denote the graded vector space of all Ext groups between $\cE$ and $\cF$.

\begin{rem} Denoting the homotopy category of $\Perf(X,W)$ by $\D(X,W)$ is only appropriate when $X$ is smooth; in the singular case the latter notation should mean something different. In particular, in the special case that $W\equiv 0$ and the R-charge is trivial, $\Perf(X,W)$ is precisely the dg-category of perfect complexes on $X$, whose homotopy category coincides with $\D(X)$ if and only if $X$ is smooth. \end{rem}

\begin{rem}  In the rest of the paper we will consider various functors between categories of matrix factorizations, and we will write everything at the level of the homotopy categories. However it will be clear from our constructions that everything is actually well-defined at the level of dg-categories.\end{rem}

\subsection{Basic properties}\label{rem:basicproperties}
We list some other basic properties of $\D(X,W)$ for later reference.

\begin{enumerate}
\item  \label{itm:cf_mf} If $X$ is a scheme which admits a $\C^*_R$-equivariant ample line bundle, then every coherent curved dg-sheaf is equivalent to a matrix factorization, and hence defines an object in $\D(X, W)$ \cite[Lemma 2.12]{Shipman}. Presumably this fact is still true when $X$ is one of the stacks considered in this paper, but we shall not attempt to prove it, since whenever we encounter a coherent curved dg-sheaf we will be able to see explicitly that it is equivalent to a matrix factorization.

\item \label{itm:him} Let $\cE$ and $\cF$ be two curved dg-sheaves in $\Perf(X,W)$. We have discussed the `global derived morphisms'
$$\Hom^\bullet_{\Perf(X, W)}(\cE, \cF)$$
which is a chain-complex of vector spaces, but we will also need the sheaf of `local derived morphisms'. If $U\subset X$ is a ($\C^*_R$-invariant) affine open set, then  $\Hom^\bullet_{\Perf(U,X)}(\cE, \cF)$ is a dg-module over the graded algebra $\cO_U$, i.e.\ a curved dg-sheaf on $(U, 0)$.  Gluing these together over $X$ gives us a curved dg-sheaf on $(X,0)$, which we denote by
$$\RDerived\hom_X(\cE, \cF).$$
We have
$$\Hom^\bullet_{\Perf(X,W)}(\cE, \cF) = \RDerived\Gamma_X \RDerived\hom_X(\cE, \cF).$$
 In practice this sheaf is quite easy to compute: we do it by replacing $\cE$ with an equivalent matrix factorization $E$, and then
$$\RDerived\hom_X(\cE, \cF) = \hom_X(E,\cF). $$

\item \label{itm:support}  If $E$ and $F$ are matrix factorizations on an affine scheme then it is a basic observation that $\hom_X(E,F)$ is acyclic away from the critical locus of $W$, because multiplication by any partial derivative $\partial_i W$ is zero up to homotopy. Consequently, for any two curved dg-sheaves $\cE$ and $\cF$ the derived morphism sheaf $\RDerived\hom_X(\cE, \cF)$ is acyclic away from the critical locus, so its homology sheaves are supported (set-theoretically) at the critical locus.  So the whole category $\D(X,W)$ is in some sense supported on the critical locus of $W$; cf.\ \cite{OrlovFormal}.

\item \label{itm:dsing} Let $Z$ be the zero locus of $W$ and
$$\zeta\colon Z \into X $$
the inclusion. Extending Example~\ref{eg:skyscrapers}, any curved dg-sheaf on $(Z,0)$ pushes forward to give a curved dg-sheaf on $(X, W)$, so we have a functor
$$\zeta_*\colon \D(Z,0) \to \D(X,W). $$
(Note that $Z$ is typically singular so we must use a modified definition of $\D(Z,0)$ here.)

If we neglect R-charge, it is well-known (e.g.\ \cite{OrlovNonaffine}) that this functor is essentially surjective, and its kernel is the category of perfect complexes on $Z$. This gives an equivalent definition of $\D(X,W)$ as the `derived category of singularities'
$$D_{sg}(W) = \D(Z)/\Perf(Z). $$
Presumably this is still true if we include R-charge, but we shall not bother to check the full statement here. We just note the easy fact that $\zeta_*\cO_Z$ is equivalent to the matrix factorization
$$\begin{tikzcd} \cO[1] \arrow[transform canvas={yshift=.4ex}]{r}{W} & \cO \arrow[transform canvas={yshift=-0.4ex}]{l}{1} \end{tikzcd} $$
and this is contractible. It follows quickly that if $P^\bullet$ is any $\C^*_R$-equivariant perfect complex on $X$ then $\zeta_*\zeta^*P^\bullet$ is contractible in $\D(X,W)$.
\end{enumerate}

\section{Kn\"orrer periodicity and the Grassmannian side}\label{sect:knorrer}

One of the most important classical facts about matrix factorizations is Kn\"orrer periodicity \cite{Knorrer}. We will briefly discuss this phenomenon, and various modern formulations of it that have appeared in recent years \cite{OrlovKP, Isik, Shipman, Preygel}, and conclude by showing our first equivalence $\Psi_1$ in \ref{cor.psi1}.

\subsection{Kn\"orrer periodicity over a point} \label{sect:KPoverapoint}

Consider a LG B-model $X =  \C^2$ with the superpotential $W = x_1 x_2$, and let $Y$ be the subscheme of $X$ consisting of just the origin (we neglect R-charge for the moment). In its simplest form, Kn\"orrer periodicity states that we have an equivalence
$$\D(Y) \cong \D(X, W ). $$
\begin{rem}
Since $Y$ is the critical locus of $W$, this is a situation where we may take \S \ref{rem:basicproperties}(\ref{itm:support}) very literally.
\end{rem}
Finding such an equivalence is the same thing as finding a curved dg-sheaf $\cE$ on $(X,W)$ which generates the whole category, and satisfies
$$\Hom^\bullet_{\D(X, W)}(\cE, \cE) = \C.$$
Recall that this space of morphisms is a graded vector space, so implicit here is the statement that there are no morphisms in non-zero degree. Thus the object $\cE$ behaves, homologically,  like an isolated point.

 There are many possible choices for such an $\cE$; one is the skyscraper sheaf along the $x_2$-axis
$$\cE = \cO_{\{x_1=0\}}$$
with $d_{\cE}=0$ (this is an instance of Example~\ref{eg:skyscrapers}). Then we get an equivalence from $\D(Y)$ to $\D(X,W)$ by mapping $\cO_Y$ to $\cE$.

\begin{rem}  This choice of $\cE$ breaks the symmetry between $x_1$ and $x_2$. This is an important feature: there is a second choice where we let $\cE$ be the skyscraper sheaf on the $x_1$-axis, and this produces a different equivalence, differing from the first one by a shift. A related fact is that if we want to add R-charge to this construction then we can do it by letting $\C^*_R$ act with weight 2 on $x_1$ and weight 0 on $x_2$, or vice versa, but this also breaks the symmetry. \end{rem}

This basic version of Kn\"orrer periodicity can be generalized in various directions. Firstly, we may replace $X=\C^2$ with  $X=\C^{2n}$, and $W$ with a non-degenerate quadratic function, so the critical locus of $W$ is still the origin.  We replace the isotropic line $\{x_1=0\} \subset \C^2$ with a choice of maximally isotropic subspace $\maxisoGenl \subset \C^{2n}$. Then one can check that $\cE=\cO_\maxisoGenl$ is point-like, and generates $\D(X,W)$, so as above it gives us an equivalence between the derived category of a point and $\D(X,W)$.

\subsection{In families: first version}
\label{sect:familiesfirstversion}

Now we can try to formulate this construction in families. Most obviously we could choose $X$ to be the total space of an even-rank vector bundle 
$$\pi\colon X \to Y$$
and $W$ to be a fibrewise non-degenerate quadratic form on $X$. Suppose we can find a subbundle $\maxisoGenl\subset X$ which gives a maximally isotropic subspace in each fibre.  Then for each point $y\in Y$ we have a curved dg-sheaf $\cE_y = \cO_{\maxisoGenl_y}$ on the fibre over $y$, and these fit together into a family $\cE = \cO_\maxisoGenl$ on the whole space. We want to consider the functor whose Fourier--Mukai kernel is $\cE$, i.e.\ it sends each skyscraper sheaf $\cO_y\in \D(Y)$ to the corresponding $\cE_y\in \D(X,W)$, and sends the whole structure sheaf $\cO_Y$ to $\cE$. In other words, we consider the diagram
$$ Y \stackrel{\pi}{\longleftarrow}  \maxisoGenl \stackrel{\iota}{\longrightarrow} X $$
and the induced functors
$$\D(Y) \stackrel{\pi^*}{\longrightarrow} \D(\maxisoGenl) \stackrel{\iota_*}{\longrightarrow} \D(X, W). $$
It is proven in \cite[Thm.~9.1.7(ii)]{Preygel} that, given such a $\maxisoGenl$, the composition $\pi^*\iota_*$ gives us an equivalence between $\D(Y)$ and $\D(X,W)$.\footnote{The existence of such an $\maxisoGenl$ is quite a strong condition; see \cite[\S 4.3]{ASS} for some discussion of this point.} 

\begin{rem}\label{rem:embeddings}
In particular, $\pi^*\iota_*$ is fully faithful. We pause to discuss this point in a little more detail, since the reasoning used will be important  in Section~\ref{sect:pfaffian}.

The functor $\pi^*\iota_*$ is linear over the sheaf of functions on $Y$, so fully-faithfulness can be checked locally on $Y$. Moreover if we restrict to an affine neighbourhood in $Y$ then the derived category is generated by the structure sheaf, so locally we only need to check fully-faithfulness on the structure sheaf. Therefore it's enough to check that the endomorphisms of
$$ \pi^*\iota_* \cO_Y =\cE \in \D(X, W)$$
agree with the endomorphisms of $\cO_Y \in \D(Y)$, as a sheaf over $Y$, i.e.\ that
$$\pi_*\RDerived\hom_X(\cE, \cE) \cong \cO_Y. $$
This statement is equivalent to the fully-faithfulness of $\pi^*\iota_*$; in particular it obviously implies that
$$\Hom^\bullet_{\D(X,W)}(\cE_y, \cE_y) \cong \Hom^\bullet_{\D(Y)}(\cO_y, \cO_y)$$
for all points $y\in Y$. Informally at least the converse implication also holds: if we have a family of orthogonal objects $\cE_y$, and each one is `point-like' in this sense, then the resulting kernel $\cE$ must give a fully faithful functor. 
\end{rem}

\subsection{In families: second version}

There is a more general family version of Kn\"orrer periodicity, based on the observation that we don't actually need a projection $\pi\colon X \to Y$, only a projection $\pi\colon \maxisoGenl \to Y$.  Specifically, we consider the total space of a vector bundle 
$$\pi\colon X \to B$$
over some base $B$, and let 
$$Y\subset B$$
be the zero locus of some transverse section $f\in \Gamma_B(X^\vee)$. We can equip $X$ with the superpotential 
$$ W = fp$$
where $p$ denotes the tautological section of $\pi^*X$. Since $f$ is transverse, $Y$ is smooth and is exactly the critical locus of $W$. The normal bundle $\cN_{Y/X}$ to $Y$ carries a non-degenerate quadratic form given by the Hessian of $W$, and furthermore this bundle has a canonical maximally isotropic subbundle given by $\maxisoGenl=X|_Y$. So we should be able to get an equivalence between $\D(Y)$ and $\D(X,W)$ using the diagram
$$ Y \stackrel{\pi}{\longleftarrow}  X|_Y \stackrel{\iota}{\longrightarrow} X. $$
Note that there is a more-or-less canonical way to add R-charge to this construction, by letting $\C^*_R$ act trivially on $B$ and with weight 2 on the fibres of $X$.

\begin{thm}[{\cite[Thm.~3.4]{Shipman}}]\label{thm.Knorrer}  Consider a LG B-model $(X,W)$ of the form described above, with $\C^*_R$ acting fibrewise with weight 2. Assume that the base $B$ is a smooth quasi-projective variety. Then the composition
$$\D(Y) \stackrel{\pi^*}{\longrightarrow} \D(X|_Y) \stackrel{\iota_*}{\longrightarrow} \D(X, W) $$
is an equivalence.
\end{thm}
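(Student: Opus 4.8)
The plan is to realize the composite $\iota_*\pi^*$ as a Fourier--Mukai-type functor with an explicit kernel, prove full faithfulness by a local computation of its sheaf of derived endomorphisms, and deduce essential surjectivity by reducing, Zariski-locally on $B$, to the classical Kn\"orrer periodicity of \S\ref{sect:KPoverapoint}.  I expect this last reduction to be the main obstacle.

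\emph{Setting up the functor.}  Since $f$ vanishes on $Y$, the superpotential $W=fp$ vanishes on $X|_Y=\pi^{-1}(Y)$, so $\cE:=\iota_*\pi^*\cO_Y=\cO_{X|_Y}$ with the zero differential is a genuine curved dg-sheaf on $(X,W)$; more generally $\iota_*\pi^*$ lands in $\D(X,W)$, and it is linear over the functions on $Y$.  On $X$ there are the tautological section $p$ of $\pi^*X$ and the pulled-back section $\pi^*f$ of $\pi^*X^\vee$, whose pairing is $W$, so
$$K=\bigl(\wedge^{\bullet}\pi^*X,\ d_K\bigr),\qquad d_K=\bigl(\text{wedging with }p\bigr)+\bigl(\text{contraction against }\pi^*f\bigr),$$
satisfies $d_K^2=W\cdot\id$ by the usual identity relating wedging and contraction, and with the evident $\C^*_R$-equivariant structure it is a matrix factorization on $(X,W)$.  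Dropping the wedging term leaves the Koszul resolution of $\cO_{X|_Y}$ determined by the regular section $\pi^*f$, so one checks as in the classical Kn\"orrer setup that the augmentation $K\to\cO_{X|_Y}$ is an equivalence in $\D(X,W)$; thus $K$ represents $\cE$.

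\emph{Full faithfulness.}  By the reasoning of Remark~\ref{rem:embeddings}, linearity over $\cO_Y$ reduces full faithfulness to a statement that is local on $Y$, hence on $B$, where it suffices to check that $\pi_*\RDerived\hom_X(\cE,\cE)\cong\cO_Y$.  Following \S\ref{rem:basicproperties}(\ref{itm:him}) I would compute $\RDerived\hom_X(\cE,\cE)=\hom_X(K,\cO_{X|_Y})$.  Its underlying graded sheaf is $\wedge^{\bullet}(\pi^*X^\vee)|_{X|_Y}=\wedge^{\bullet}\cN_{X|_Y/X}$, a sheaf on $X|_Y$; the contraction part of $d_K$ induces wedging with $\pi^*f|_{X|_Y}=0$ and so drops out, while the wedging part of $d_K$ induces contraction against the tautological section of the bundle $X|_Y\to Y$.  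Hence $\RDerived\hom_X(\cE,\cE)$ is the Koszul complex of that tautological section, whose cohomology is the structure sheaf of the zero section $Y\subset X|_Y$ — this zero section being $\Crit(W)$, in accordance with \S\ref{rem:basicproperties}(\ref{itm:support}) — and $\pi$ restricts to an isomorphism on it, giving $\pi_*\RDerived\hom_X(\cE,\cE)\cong\cO_Y$ as required.

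\emph{Essential surjectivity.}  It remains to show that the fully faithful, triangulated functor $\iota_*\pi^*$ is essentially surjective, and this is the delicate point.  By \S\ref{rem:basicproperties}(\ref{itm:support}) every object of $\D(X,W)$ is supported on $\Crit(W)=Y$, and matrix-factorization categories satisfy Zariski descent over $B$ (as for $\D_{qc}$ when $W=0$), so it suffices to work over the opens of an affine cover of a neighbourhood of $Y$.  By transversality of $f$, every point of $Y$ has a neighbourhood $U\subset B$ over which $X$ is trivial and $f_1,\dots,f_r$ extend to part of a coordinate system, so that $(X|_U,W)$ becomes $\bigl(\C^{2r},\sum_i f_ip_i\bigr)\times\C^{\dim B-r}$, the second factor with zero superpotential; via the Thom--Sebastiani decomposition of this product, $\iota_*\pi^*$ is then identified over $U$ with the classical Kn\"orrer equivalence of \S\ref{sect:KPoverapoint} tensored with the identity on the trivial factor.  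Since $\iota_*\pi^*$ and its right adjoint $\pi_*\iota^!$ commute with restriction to such opens, the counit $\iota_*\pi^*\,\pi_*\iota^!\to\id$ is an isomorphism locally on $Y$, hence globally, and with full faithfulness this yields the equivalence.  The obstacle is this patching: one must construct the functor and its adjoint, establish the base-change compatibilities carefully enough at the dg level, and invoke descent for matrix-factorization categories, in order to pass from the local equivalences to a global one — which is the bulk of the work, and is essentially the content of Shipman's proof (compare also Isik \cite{Isik} in the $\Z/2$-graded setting).
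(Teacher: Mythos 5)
First, a point of comparison: the paper does not prove Theorem~\ref{thm.Knorrer} at all --- it is quoted from Shipman \cite[Thm.~3.4]{Shipman}, with \cite{OrlovKP}, \cite{Isik} (and \cite{Preygel} for the first family version) cited as alternative proofs --- so your proposal is to be measured against those references rather than against an argument in the text. Your kernel $K=(\wedge^\bullet\pi^*X,\,p\wedge(-)+\iota_{\pi^*f})$ and the full-faithfulness computation are correct and are exactly the reasoning the paper itself endorses in Remark~\ref{rem:embeddings} and reuses later (Proposition~\ref{prop:maxisocalculation}): $\RDerived\hom_X(\cE,\cE)$ becomes the Koszul complex of the tautological section of $X|_Y$, whose cohomology is the structure sheaf of the zero section $Y=\Crit(W)$, and $\pi_*$ of it is $\cO_Y$.

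The genuine gap is in essential surjectivity. A smooth quasi-projective $B$ is not Zariski-locally isomorphic to affine space, and the components $f_1,\dots,f_r$ are not part of a Zariski coordinate system; so the claimed identification of $(X|_U,W)$ with $\bigl(\C^{2r},\sum_i x_ip_i\bigr)\times\bigl(\C^{\dim B-r},0\bigr)$, and hence the reduction via Thom--Sebastiani to the classical Kn\"orrer periodicity of \S\ref{sect:KPoverapoint}, is simply not available in the topology in which you propose to glue. (Such a splitting exists \'etale-, analytically- or formally-locally, but then you would need descent for matrix-factorization categories in that topology, a different and harder statement than Zariski descent.) What is true Zariski-locally --- namely that for affine $U$ with $X$ trivialized one has $\D\bigl(U\times\A^r,\sum_i f_ip_i\bigr)\simeq\D(Y\cap U)$ for a regular sequence $f_i$ cutting out the smooth $Y\cap U$ --- is not classical Kn\"orrer periodicity over a point but precisely the affine case of the theorem being proved (Orlov \cite{OrlovKP}, Isik \cite{Isik}), which is the real content of Shipman's argument. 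So at its key step the proposal is circular, and the remaining machinery (construction of the right adjoint, base-change compatibilities, Zariski descent for $\D(-,W)$ at the dg level) is asserted rather than established; as you yourself concede, this patching plus the local theorem is the bulk of the work, so what you have is an accurate reduction outline together with a deferral to \cite{Shipman} and \cite{Isik}, not an independent proof.
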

Similar theorems are proven in \cite{OrlovKP} and \cite{Isik}.  Note that $\C^*_R$ is acting trivially on $Y$, so $\D(Y)$ really does mean the usual derived category of $Y$.

\subsection{Grassmannian example}
Now consider the LG B-model $(X_1,W)$ discussed in Section~\ref{sect:geometricGLSM}, and described more precisely in Example~\ref{eg:specifyingRcharge}. This model is exactly of the form specified by the above theorem: $X_1$ is the total space of the vector bundle $\pi\colon\cO(-1)^{\oplus 7}\to \Gr(2,V)$, and the R-charge is acting trivially on the Grassmannian and with weight 2 on the fibres. Also the superpotential is $W=fp$, where 
$$ f = A\circ \wedge^2 x $$
is a transverse section of $\cO(1)^{\oplus 7}$ on $\Gr(2,V)$ and $p$ is the tautological section of $\pi^*\cO(-1)^{\oplus 7}$. The zero locus of $f$ is the Calabi--Yau 3-fold $Y_1$, and hence Theorem~\ref{thm.Knorrer} yields the following.

\begin{cor}\label{cor.psi1} $\D(Y_1)$ is equivalent to $\D(X_1, W)$. \end{cor}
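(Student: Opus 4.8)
The plan is to recognize the Landau--Ginzburg B-model $(X_1,W)$ as an instance of the general setup of Theorem~\ref{thm.Knorrer} and then simply quote that theorem. First I would take $B=\Gr(2,V)$, which is smooth and projective, and recall from Section~\ref{sect:geometricGLSM} that $X_1$ is the total space of the rank-$7$ bundle $\pi\colon\cO(-1)^{\oplus 7}\to\Gr(2,V)$; thus $X_1$ is the total space of a vector bundle over a smooth quasi-projective base, as Theorem~\ref{thm.Knorrer} demands. Next I would identify the data $(f,p)$: choosing a basis of $V$, the linear map $A$ determines seven sections $a_1,\dots,a_7$ of $\cO(1)$ on $\Gr(2,V)$, i.e.\ a single section $f=A\circ\wedge^2 x$ of $(\cO(-1)^{\oplus 7})^\vee=\cO(1)^{\oplus 7}$, while $p=(p_1,\dots,p_7)$ is the tautological section of $\pi^*\cO(-1)^{\oplus 7}$; the superpotential from Example~\ref{eg:specifyingRcharge} is then exactly $W=\sum_i a_i p_i = fp$.

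It remains to check the two remaining conditions of the theorem. By the genericity Assumption~\ref{assm:generic}, the section $f$ is transverse, so its zero locus is the smooth threefold $Y_1\subset\Gr(2,V)$ and this is precisely the critical locus of $W$; moreover the normal bundle $\cN_{Y_1/X_1}$ then carries the nondegenerate Hessian quadratic form with canonical maximally isotropic subbundle $\maxisoGenl=X_1|_{Y_1}$, as in the discussion preceding the theorem. For the R-charge, Example~\ref{eg:specifyingRcharge} has $\C^*_R$ acting trivially on $\Hom(S,V)$ and with weight $2$ on $\Hom(V,\wedge^2 S)$, which restricts on $X_1$ to the action that is trivial on the base $\Gr(2,V)$ and of weight $2$ on the fibres --- exactly the fibrewise weight-$2$ R-charge in the hypothesis. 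Theorem~\ref{thm.Knorrer} therefore applies, and the composition
$$\D(Y_1)\stackrel{\pi^*}{\longrightarrow}\D(X_1|_{Y_1})\stackrel{\iota_*}{\longrightarrow}\D(X_1,W),\qquad \iota\colon X_1|_{Y_1}\into X_1,$$
is an equivalence; this is the functor $\Psi_1$ of diagram~\eqref{threeequivs}.

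I do not expect any real obstacle here: the content of the corollary is entirely contained in Theorem~\ref{thm.Knorrer}, and the proof is a matter of matching notation. The one genuine input is the transversality of $f$, which is exactly Assumption~\ref{assm:generic} (equivalently, the statement that $Y_1$ has the expected dimension and is smooth). The only points that warrant a second look are the grading conventions --- that with the convention $\cO(-1)=\wedge^2 S$ the superpotential $W$ really has R-charge $2$ and $f$ is a section of the dual bundle rather than some other twist --- but these are routine. Finally, note that $\C^*_R$ acts trivially on $Y_1$, so the left-hand side is the ordinary bounded derived category $\D(Y_1)$, which is what is needed for the outer equivalence~\eqref{eqn.equivalence1}.
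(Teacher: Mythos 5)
Your proposal is correct and follows exactly the paper's own argument: the proof is just the observation that $(X_1,W)$, with $X_1=\Tot(\cO(-1)^{\oplus 7}\to\Gr(2,V))$, $W=fp$ for the transverse section $f=A\circ\wedge^2 x$ of $\cO(1)^{\oplus 7}$ (transversality being Assumption~\ref{assm:generic}), and the fibrewise weight-$2$ R-charge, satisfies the hypotheses of Theorem~\ref{thm.Knorrer}, whose conclusion is the equivalence $\D(Y_1)\cong\D(X_1,W)$. The additional checks you flag (gradings, $\C^*_R$ acting trivially on $Y_1$) are indeed routine and consistent with the paper's conventions.
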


This concludes our discussion of the first equivalence $\Psi_1$.

\section{Windows} \label{sect:windows}

In this section we will define the category $\Br(X_2, W)$ and the equivalence $\Psi_2$.

\subsection{Without the superpotential} \label{sect:without}

Let
$$X_1 \stackrel{\iota_1}{\into} \mfX \stackrel{\iota_2}{\hookleftarrow} X_2$$
be the three spaces considered in Section~\ref{sect:geometricGLSM}. For the purposes of this section we set the superpotential $W$ to zero, and take the $\C^*_R$ action to be trivial, so $\D(X_i)$ and $\D(\mfX)$ are the usual derived categories.

We are interested in the relationship between $\D(X_1)$ and $\D(X_2)$. If $X_1$ and $X_2$ were manifolds (or orbifolds) then we would expect them to be derived equivalent, since they are birational and Calabi--Yau. What should we expect in this situation?

Physically, we can reason as follows. Using Hori and Tong's construction, we know that the sigma models with targets $X_1$ and $X_2$ lie in the same K\"ahler moduli space of CFTs.\footnote{We gloss over the fact that these targets are non-compact.} Consequently the B-models associated to each space are the same. In particular, they have the same category of B-branes, and so we should have two equivalent categories
$$\Br(X_1) \cong \Br(X_2).$$

Since $X_1$ is a manifold, we know that the category of B-branes $\Br(X_1)$ is $\D(X_1)$. However, $X_2$ is an Artin stack. A \textquotedblleft sigma-model" whose target is an Artin stack is really a gauge theory, and understanding the category of B-branes in a gauge theory is much more difficult.  We will not attempt to address this general question; instead we will make an \emph{ad hoc} definition of the category $\Br(X_2)$, constructing a fully faithful embedding
$$\D(X_1) \into \D(X_2) $$
and defining $\Br(X_2)$ as the image of this embedding.  The main motivation for our definition is just that it gives something equivalent to $\D(X_1)$, but we will give some \emph{a posteriori} justification (see Remark~\ref{rem:branesonfibres}).

To construct the embedding we will use the technique of `grade-restriction', or `windows', introduced by the third-named author in \cite{Segal}. This was directly inspired by the physics paper \cite{HHP}, but was also based on a long history of mathematical ideas by Beilinson, Kawamata, Van den Bergh, etc. What we do is to find a subcategory
$$\cG \subset \D(\mfX) $$
such that the restriction functor $\iota_1^*\colon \cG \to \D(X_1)$ is an equivalence, and the other restriction functor $\iota_2^*\colon \cG \to \D(X_2)$ is fully faithful. In fact this technique has now been developed into an elegant general theory  \cite{HL,BFK} which can be applied immediately in this example to show that such a $\cG$ exists.  Unfortunately the description that this theory gives of the image of $\cG$ inside $\D(X_2)$ is not explicit enough for our purposes, so we take a more hands-on approach.
\pgap

Observe that any representation of $\GL(S) = \GL(2)$ determines a vector bundle on each of the spaces that we are considering.  We will be interested in the \quotes{rectangle} of representations
\begin{equation}\label{eq:rectangularcollection}\WindowSet{\Sym^l S^\vee \otimes (\det S^\vee)^m_{\InSpaceOf{,}{}}}{3}{7}.\end{equation}
The associated vector bundles on $\Gr(2,V)$ form a (Lefschetz) full strong exceptional collection by \cite[Thm.~4.1]{KuznetsovECs}.  Let $T_{l,m}$ denote the vector bundle $\Sym^l S^\vee(m)$ on~$\mfX$ associated to $\Sym^l S^\vee \otimes (\det S^\vee)^m$, and let
\begin{equation}\label{eq:window}\cG = \Window{T_{l,m}}{3}{7} \;\; \subset \;\; \D(\mfX)\end{equation}
be the subcategory generated by this set of vector bundles.\footnote{Here (and throughout the paper) we mean \quotes{generated} in the strong sense, by taking shifts and cones but \emph{not} direct summands -- that is, $\cG$ consists of those objects that have a finite resolution in terms of this set of bundles.}

\begin{prop}\label{prop:window}
The restriction functor 
$$\iota_1^*\colon \cG \to \D(X_1)$$
 is an equivalence, and the restriction functor 
$$\iota_2^*\colon \cG \to \D(X_2)$$
 is fully faithful.
\end{prop}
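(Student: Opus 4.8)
The plan is to verify both statements by a direct cohomology computation, exploiting the very explicit toric-like geometry of $\mfX$, $X_1$ and $X_2$. The whole proposition reduces to computing, for the generating bundles $T_{l,m}$ with $l\in[0,3)$ and $m\in[0,7)$, the graded morphism spaces $\Hom^\bullet(T_{l,m},T_{l',m'})$ on each of the three spaces, and comparing them. Since $T_{l,m}$ is a vector bundle, $\RDerived\hom(T_{l,m},T_{l',m'}) = T_{l,m}^\vee\otimes T_{l',m'}$, and the morphism complex is computed by $\RDerived\Gamma$ of this bundle. So everything comes down to understanding the cohomology of bundles of the form $\Sym^l S^\vee\otimes\Sym^{l'}S\otimes(\det S)^{m'-m}$ (tensored with the various function-ring directions) on $\mfX$, $X_1$, $X_2$.

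First I would set up the cohomology on the ambient stack $\mfX$. Because $\mfX = \quotstack{\Hom(S,V)\oplus\Hom(V,\wedge^2 S)}{\GL(S)}$ is a linear quotient stack, $\RDerived\Gamma_\mfX$ of any $\GL(S)$-equivariant bundle is just the $\GL(S)$-invariants (more precisely the relevant Schur-functor multiplicity) inside $\Sym^\bullet(\Hom(S,V)^\vee\oplus\Hom(V,\wedge^2 S)^\vee)\otimes(\text{the given representation})$, with no higher cohomology. Then I would pass to $X_1$ and $X_2$, which are the two GIT-semistable loci. For $X_1 = \Tot_{\Gr(2,V)}\cO(-1)^{\oplus 7}$, pushing down the affine fibre direction reduces $\RDerived\Gamma_{X_1}$ to $\RDerived\Gamma_{\Gr(2,V)}$ of $\Sym^l S^\vee(m)\otimes\Sym^{l'}S(-m')\otimes\Sym^\bullet(\cO(1)^{\oplus 7})$; here the key input is the Borel--Weil--Bott calculation on $\Gr(2,V)$, together with the fact (from \cite[Thm.~4.1]{KuznetsovECs}) that the bundles $\Sym^l S^\vee(m)$, $l\in[0,3)$, $m\in[0,7)$, form a full strong exceptional collection. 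Strongness gives the concentration in degree $0$; fullness gives that $\iota_1^*\colon\cG\to\D(X_1)$ is essentially surjective; and comparing with the $\mfX$ computation shows $\iota_1^*$ is fully faithful. For $X_2$, I would use the fibration $X_2\to\P^6 = \P\Hom(V,\wedge^2 S)$ with fibre $\quotstack{\Hom(S,V)}{\SL(S)}$; on such a fibre the invariant theory of $\SL(2)$ acting on $\Hom(S,V)\cong S^{\oplus 7}$ is classical (the ring of invariants is generated by the Plücker-type brackets), and one computes $\RDerived\Gamma$ of each $\Sym^l S^\vee$ against this. Relative $\RDerived\Gamma$ over $\P^6$ followed by $\RDerived\Gamma_{\P^6}$ then yields the answer on $X_2$. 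The comparison map $\iota_1^*\cG\to\iota_2^*$ is realised by restriction from $\mfX$, so it suffices to check that $\RDerived\Gamma_{\mfX}\to\RDerived\Gamma_{X_2}$ is an isomorphism on each $T_{l,m}^\vee\otimes T_{l',m'}$; this is the ``grade-restriction window'' estimate, and it holds precisely because the weights $l-l'\in(-3,3)$ and $m-m'\in(-7,7)$ are small enough that no cohomology is created or destroyed when we delete the unstable locus $\{p=0\}$.

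The genuinely delicate step — and the one I expect to be the main obstacle — is the $X_2$ computation, for two reasons. First, $X_2$ is an Artin stack, not a variety, so ``$\RDerived\Gamma$'' of a bundle can have infinitely many cohomology classes in a single degree unless one is careful: the $\SL(S)$-invariants of $\Sym^\bullet(\Hom(S,V)^\vee)\otimes\Sym^l S^\vee$ form an infinite-dimensional module, and one must show the apparent extra classes are killed upon restricting to the semistable locus $\{p\neq 0\}$ and taking global sections over $\P^6$, i.e.\ that the relevant local cohomology along $\{p=0\}$ cancels them. Concretely this is a Koszul/local-cohomology computation on $\P^6$ with the tautological section $p$, and it is exactly where the bound $m'-m<7$ enters (seven being $\dim\Hom(V,\wedge^2 S) = h^0(\cO_{\P^6}(1))$). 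Second, establishing that $\iota_2^*$ on $\cG$ is \emph{fully faithful} and not merely that individual $\Hom$-spaces match requires checking the vanishing of all higher $\Ext^{>0}$ and the correct dimension of $\Hom^0$ simultaneously for every pair in the generating rectangle; organising this cleanly — probably by a spectral sequence for the fibration $X_2\to\P^6$ combined with the known $\SL(2)$-representation theory on the fibre — is the technical heart of the argument. Once the $\mfX$, $X_1$, $X_2$ computations all agree on the generating bundles, fully-faithfulness of both restriction functors and essential surjectivity of $\iota_1^*$ follow formally.
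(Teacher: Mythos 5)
Your plan is essentially the paper's proof: reduce everything to the generating bundles $T_{l,m}$, observe that on $\mfX$ taking $\GL(S)$-invariants is exact so there are no higher $\Ext$'s, show the $\Ext$'s do not change under restriction by pushing down along $X_1 \to \Gr(2,V)$ (Littlewood--Richardson plus Borel--Bott--Weil) and along $X_2 \to \P^6$ ($\SL(2)$-invariant theory fibrewise), and deduce essential surjectivity of $\iota_1^*$ from fullness of Kuznetsov's collection (the paper does this by extending a sheaf from $X_1$ to $\mfX$, resolving it there by the $T_{l,m}$, and then resolving those by the rectangle). Three smaller comments on your framing. First, the paper gets the degree-zero comparison for free from Hartogs, since both unstable loci have codimension at least $2$, rather than by recomputing sections. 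Second, on the $X_1$ side ``strongness'' of the exceptional collection is not by itself enough: pushing down the fibre directions produces the factor $\Sym^\bullet\cO(1)^{\oplus 7}$, so one needs $\Ext^{>0}_{\Gr}(\Sym^l S^\vee, \Sym^{l'}S^\vee(k))=0$ for \emph{all} positive twists $k$, which is the paper's extra Lemma~\ref{lem.Kuznetsov_extended} beyond Kuznetsov's vanishing. Third, and most importantly, your assertion that full faithfulness of $\iota_2^*$ ``holds precisely because the weights are small enough'' is not right as stated: the rectangle narrowly violates the naive grade-restriction bound for the wall $\{p=0\}$ --- the central $\C^*$-weights $l+2m$ of the $T_{l,m}$ take the $15$ values $0,\dots,14$, while the relevant window width is $14$, the weight on $\det\Hom(V,\wedge^2 S)$ --- which is exactly why the paper remarks that the general theory of Halpern-Leistner and Ballard--Favero--Katzarkov does not identify this window. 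So the hands-on computation you correctly flag as the technical heart is genuinely unavoidable, and it is what the paper does: after pushing down to $\P^6$, only the balanced Schur summands $(\det S^\vee)^{\nu}$ have $\SL(S)$-invariants, they become $\cO_{\P^6}(-\nu)$ with $\nu \le m' \le 6$, and hence contribute no higher cohomology.
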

\noindent Consequently we obtain an embedding of $\D(X_1)$ into $\D(X_2)$, and its image is the subcategory generated by the vector bundles associated to the representations \eqref{eq:rectangularcollection}.  We define $\Br(X_2)$ to be this subcategory.

\begin{rem}
 The reason we're not using the general theory of \cite{HL,BFK} is that it fails to identify this explicit set of generating bundles for the category $\cG$. This is because Kuznetsov's exceptional collection does not fit with the `grade-restriction rules' for this GIT problem (for comparison, Kapranov's exceptional collection fits the Grassmannian side perfectly, but not the Pfaffian side). It would be interesting to find a natural derivation of Kuznetsov's exceptional collection via GIT.
\end{rem}

We split the proof of Proposition~\ref{prop:window} into four lemmas.

\begin{lem}\label{lem.fullyfaithful} Both $\iota_1^*$ and $\iota_2^*$ are fully faithful. 
\end{lem}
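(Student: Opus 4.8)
The plan is to prove this by the grade-restriction-window method. Since $\cG$ is generated (under shifts and cones) by the bundles $T_{l,m}$ with $l\in[0,3)$ and $m\in[0,7)$, and the functors $\iota_1^*,\iota_2^*$ are exact, it suffices to show that for each ordered pair of these generators the restriction map
\[\RHom_{\mfX}(T_{l,m},T_{l',m'})\To\RHom_{X_i}(\iota_i^*T_{l,m},\iota_i^*T_{l',m'})\]
is a quasi-isomorphism. Write $\mfX=\quotstackinline{\mathbb A}{\GL(S)}$ with $\mathbb A=\Hom(S,V)\oplus\Hom(V,\wedge^2 S)$. Because $\GL(S)$ is reductive, the source is concentrated in degree $0$ and equals the $\GL(S)$-invariants of $\mathcal R\otimes\C[\mathbb A]$, where $\mathcal R=\Sym^lS\otimes\Sym^{l'}S^\vee\otimes(\det S)^{m-m'}$ is the representation underlying $\hom_{\mfX}(T_{l,m},T_{l',m'})$; the target is the analogous computation over the semistable locus $X_i\subset\mathbb A$. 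Thus the cone of the restriction map is the shifted $\GL(S)$-invariant part of the local cohomology $\RDerived\Gamma_{Z_i}(\mathbb A,\mathcal R\otimes\C[\mathbb A])$ along the unstable locus $Z_i$, and the lemma is equivalent to the vanishing of this invariant local cohomology for all $l,l'\in\set{0,1,2}$ and $m,m'\in\set{0,\dots,6}$. Here $Z_1=\set{\rk x\le 1}$ and $Z_2=\set{p=0}$, both regarded as $\GL(S)$-invariant closed subsets of $\mathbb A$.

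For $\iota_2^*$ this is a direct representation-theoretic computation. The unstable locus $Z_2=\set{p=0}$ is a linear subspace of codimension $7$, so its local cohomology is a single $\GL(S)$-representation placed in cohomological degree $7$, namely $\mathcal R\otimes\C[\Hom(S,V)]\otimes\det N\otimes\Sym^\bullet N$ with $N=\Hom(V,\wedge^2 S)$ viewed as a $\GL(S)$-module. Decomposing $\Sym^lS\otimes\Sym^{l'}S^\vee$ and $\C[\Hom(S,V)]$ into $\GL(2)$-irreducibles via Clebsch--Gordan, a trivial summand of the tensor product can occur only if a linear equation in the exponent of $\det S$ — subject to the triangle inequalities that determine which trivial summands appear — has a solution with the indices in the allowed ranges; a short case analysis shows it has none, precisely because $l,l'\le 2$ and $\lvert m-m'\rvert\le 6$. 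So the dimensions of the rectangle \eqref{eq:rectangularcollection} do real work here: already allowing $m=7$ would change $\RHom(T_{0,0},T_{0,7})$ under restriction to $X_2$, so restriction $\D(\mfX)\to\D(X_2)$ is genuinely not fully faithful.

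For $\iota_1^*$ I would instead exploit that $X_1$ is the total space of $\pi\colon\cO(-1)^{\oplus 7}\to\Gr(2,V)$ and that $\iota_1^*T_{l,m}=\pi^*\bigl(\Sym^lS^\vee(m)\bigr)$. Pushing forward along the affine morphism $\pi$, with $\pi_*\cO_{X_1}=\bigoplus_{k\ge 0}\cO(k)\otimes\Sym^k V$, identifies $\RHom_{X_1}(\iota_1^*T_{l,m},\iota_1^*T_{l',m'})$ with $\bigoplus_{k\ge 0}\RDerived\Gamma\bigl(\Gr(2,V),\Sym^lS\otimes\Sym^{l'}S^\vee(m'-m+k)\bigr)\otimes\Sym^k V$, whose degree-$0$ part already matches the $\GL(S)$-invariants computed above. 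Hence full faithfulness of $\iota_1^*$ on $\cG$ reduces to the single vanishing statement $H^{>0}\bigl(\Gr(2,V),\,\Sym^lS\otimes\Sym^{l'}S^\vee(j)\bigr)=0$ for all $l,l'\in\set{0,1,2}$ and all $j\ge -6$. This is a Borel--Weil--Bott computation on $\Gr(2,V)$; it is essentially a twisted form of Kuznetsov's theorem \cite[Thm.~4.1]{KuznetsovECs} that $\set{\Sym^lS^\vee(m):l\in[0,3),\,m\in[0,7)}$ is a strong exceptional collection, which we are invoking in any case.

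The main obstacle is the Grassmannian side. Unlike the $X_2$ window, Kuznetsov's collection does not obey the naive grade-restriction inequalities — this is the content of the Remark following Proposition~\ref{prop:window} — so the required cohomology vanishings cannot simply be quoted from the general theory of \cite{HL,BFK} and must be extracted from the Borel--Weil--Bott combinatorics underlying \cite{KuznetsovECs}. The twists $j=-6,\dots,-1$ that arise sit exactly at the edge of the cohomology-vanishing range on $\Gr(2,7)$, which is why the precise rectangle $[0,3)\times[0,7)$ is the right one. The $X_2$ half, by contrast, is short and self-contained.
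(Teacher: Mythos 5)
Your strategy is correct, and the two halves compare to the paper as follows. On the Pfaffian side you take a genuinely different route: you compute the cone of restriction as the $\GL(S)$-invariant local cohomology along the linear unstable locus $\{p=0\}$ (concentrated in degree $7$, equal to $\mathcal{R}\otimes\C[\Hom(S,V)]\otimes\Sym^\bullet N\otimes\det N$) and kill invariants by a $\det S$-weight bound plus the $\SL(2)$ Clebsch--Gordan constraint, which correctly disposes of the single boundary case $(l,l',m,m')=(0,2,0,6)$. The paper instead pushes forward to the stack $\mathcal{P}$ and then along $\delta$ to $\P^6$, decomposes via Littlewood--Richardson, and uses that $\delta_*\Schur^\mu S^\vee=\cO_{\P^6}(-\nu)$ has no higher cohomology since $\nu\le \otherm\le 6$; the two computations carry the same representation-theoretic content, and yours has the merit of making the sharpness of the rectangle (your $T_{0,0}$ versus $T_{0,7}$ check) visible directly. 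On the Grassmannian side your argument is the paper's own: push forward along $X_1\to\Gr(2,V)$, identify $\stackProjection{1}{}_*\cO_{X_1}$ with $\Sym^\bullet\cO(1)^{\oplus 7}$, and reduce to vanishing of $H^{>0}\big(\Gr(2,V),\Sym^l S\otimes\SymOtherl\, S^\vee(j)\big)$ for $l,\otherl\le 2$, $j\ge -6$. (One small point you assert rather than argue: that the degree-zero parts match. The paper gets this from Hartogs, since both unstable loci have codimension at least $2$; in your local-cohomology framework it is the vanishing of $H^{0}_{Z_i}$ and $H^{1}_{Z_i}$, which holds for the same codimension reason.)

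The one substantive omission is that you never verify the Grassmannian vanishing you reduce to, and your gloss that it is "essentially a twisted form of Kuznetsov's theorem" undersells what is needed. For twists $j\in[-6,0]$ it is indeed Kuznetsov's computation (the paper's Lemma~\ref{lem.Kuznetsov}, i.e.\ \cite[Lem.~3.5]{KuznetsovECs}), but the strictly positive twists $j=m'-m+k>0$, which occur for arbitrarily large $k$ from the factor $\Sym^k V\otimes\cO(k)$, are not part of the exceptional-collection statement and require a separate Borel--Bott--Weil argument; this is exactly the paper's Lemma~\ref{lem.Kuznetsov_extended}, proved by splitting off $\Schur^{\otherl-k,-l-k}S^\vee$ inductively and checking dominance or degeneracy of the shifted weight. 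Supplying that short induction (or citing it) is what closes your Grassmannian half; everything else in your proposal is sound.
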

\begin{proof}
It is enough to check this statement on the generators of $\cG$. On $\mfX$, there are no higher $\Ext$'s between them: since they are vector bundles we have
\[ \Ext^p_\mfX(T_{l,m}, T_{l',m'}) \cong \RDerived^p \Gamma_\mfX (T_{l,m}^\vee \otimes T_{l',m'}) \]
and the functor of taking $\GL(S)$-invariants (i.e.\ global sections) is exact. Also, the $\Ext^0$'s between the generators will not change when we restrict to either $X_1$ or $X_2$. To see this note that the complements of both substacks have codimension at least 2, so by Hartogs' lemma the space of all sections of the bundle $T_{l,m}^\vee \otimes T_{l',m'}$ doesn't change after restriction, and therefore neither does the space of $\GL(S)$-invariant sections.

So we need only check that the generators don't acquire any higher $\Ext$'s after restriction, i.e.\ that
$$\Ext^{>0}_{X_i} \big(\iota_i^*T_{l\InSpaceOf{'}{},m},\; \iota_i^*T_{\otherl, \otherm}\big) = 0 $$
for all $l, \otherl\in [0,3)$ and $m, \otherm\in [0,7)$, for both $i=1$ and $i=2$.

For $i=1$ we use the projection formula applied to the projection
$$\stackProjection{1}\colon X_1= \Tot\big({\cO(-1)^{\oplus 7}}\big) \to \Gr(2,V)$$
to compute the cohomology of
\begin{align*} &\RHom_{X_1} \big(\iota_1^*T_{l\InSpaceOf{'}{},m},\; \iota_1^*T_{\otherl,\otherm} \big)   \\
&\hspace{20pt}\cong \RHom_{X_1}  \left( \stackProjection{1}^* \Sym^l S^\vee (m),  \; \stackProjection{1}^* \Sym^{\otherl}\! S^\vee (\otherm) \right)  \\
&\hspace{20pt}\cong \RHom_{\Gr(2,7)}  \left( \Sym^l S^\vee (m),  \; \stackProjection{1}{}_* \stackProjection{1}^* \left(\Sym^{\otherl}\! S^\vee (\otherm) \right)\right)  \\
&\hspace{20pt}\cong \RHom_{\Gr(2,7)}  \left( \Sym^l S^\vee (m),  \; \Sym^{\otherl}\! S^\vee (\otherm) \otimes \Sym^\bullet \cO(1)^{\oplus 7} \right). \end{align*}
Our claim now follows from the vanishing result used in \cite{KuznetsovECs}, which is stated below in Lemma~\ref{lem.Kuznetsov}, and a minor extension of it, given in Lemma~\ref{lem.Kuznetsov_extended}.

For $i=2$ we work similarly, using the fact that $X_2$ has a projection
$$ \stackProjection{2}\colon X_2=\Tot \big(S^{\vee \oplus 7}\big) \to \mathcal{P}$$
to an Artin stack $\mathcal{P}=\quotstackbig{\wedge^2\!S^{\oplus 7} - \{0\} }{\GL(S)}$. There is a map $\delta\colon \mathcal{P} \to \P^6$ induced by $\det\colon \GL(S) \to \C^*$, and forgetting the isotropy groups. Now working as above for $X_1$, and using that the functor $\delta_*$ is exact, we have
\begin{align*} &\RHom_{X_2} \big(\iota_2^*T_{l\InSpaceOf{'}{},m},\; \iota_2^*T_{\otherl, \otherm}\big)   \\
&\hspace{20pt}\cong \RHom_{\mathcal{P}}\left(\Sym^l S^\vee(m), \;\SymOtherl\!\, S^\vee(\otherm) \otimes \Sym^\bullet S^{\oplus 7}    \right) \\
&\hspace{20pt}\cong \RDerived\Gamma_{\P^6} \delta_* \left( \Sym^l S \otimes \SymOtherl\!\, S^\vee \otimes (\det S^\vee)^{\otherm-m} \otimes \Sym^\bullet S^{\oplus 7}    \right).   \end{align*}
Now using the Littlewood--Richardson rule \cite[\S A.1]{FultonHarris} we may decompose this last bundle into direct summands corresponding to irreducible representations of $\GL(S)$. The summands we obtain are Schur powers $\Schur^{\mu} S^\vee$ with $\mu \leq (\otherm, \otherm+\otherl)$, with the maximal $\mu$ occurring being the highest weight for the bundle $T_{\otherl,\otherm}$. Now we evaluate  $\delta_* (\Schur^{\mu} S^\vee)$. Every point of $\mathcal{P}$ has non-trivial stabilizer $\SL(S) \subset \GL(S)$, and $\Schur^{\mu} S$ has non-trivial $\SL(S)$-invariant vectors only if $\mu=(\nu,\nu)$. In this case $\Schur^{\mu} S^\vee \cong (\det S^\vee)^{\nu}$ and hence $\delta_* (\Schur^{\mu} S^\vee) \cong \cO_{\P^6}(-\nu)$. This has no higher cohomology as long as $\nu \leq 6$, and so we are done because $\nu \leq \otherm\leq 6$ by construction.\end{proof}

The following two lemmas are calculations used in Lemma~\ref{lem.fullyfaithful} above.

\begin{lem}[{\cite[Lem.~3.5]{KuznetsovECs}}]\label{lem.Kuznetsov} Let $\Gr=\Gr(2,V)$, with $\dim V=n$ odd. If $0 \leq l, \otherl \leq \frac{1}{2}n-1$ and $0 \leq k \leq n-1$ then
\[ \Ext_{\Gr}^p \big(\Sym^{l} S^\vee,\; \SymOtherl\!\, S^\vee(-k) \big) \cong
\begin{cases} \Sym^{\otherl-l} S^\vee & \text{if $l \leq \otherl$, $k=0$, $p=0$,} \\
0 & \text{otherwise.}
\end{cases} \]
\end{lem}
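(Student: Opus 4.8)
This is \cite[Lem.~3.5]{KuznetsovECs}, and the proof I have in mind is a direct application of the Borel--Weil--Bott theorem. Since $\Sym^l S^\vee$ is locally free, the first step is to rewrite
$$\Ext^p_{\Gr}\big(\Sym^l S^\vee,\; \SymOtherl S^\vee(-k)\big) \;\cong\; H^p\big(\Gr,\; \Sym^l S \otimes \SymOtherl S^\vee \otimes (\det S)^{k}\big),$$
using our convention $\cO(-1) = \det S$ to absorb the twist, so that everything reduces to computing the full cohomology of a homogeneous vector bundle on $\Gr = \Gr(2,V) = \SL(V)/P$.

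The next step is to decompose the fibre into irreducibles. By the Clebsch--Gordan rule for $\GL(S) = \GL_2$ (equivalently Pieri after a determinant twist),
$$\Sym^l S \otimes \SymOtherl S^\vee \;\cong\; \bigoplus_{j=0}^{\min(l,\otherl)} \Schur^{(\otherl - j,\, j - l)} S^\vee,$$
so after twisting by $(\det S)^{k}$ the bundle becomes $\bigoplus_{j} \Schur^{(\otherl - j - k,\, j - l - k)} S^\vee$, a sum of irreducible homogeneous bundles (trivial along the rank-$(n-2)$ quotient $Q$). For a summand $\Schur^{(a,b)}S^\vee$ with $a \ge b$, Bott's algorithm forms $(a, b, 0, \dots, 0) + \rho = (a + n - 1,\, b + n - 2,\, n-3, \dots, 1, 0)$ and reads off: all cohomology vanishes if this sequence has a repeated entry, and otherwise it is concentrated in the single degree $\ell(w)$, with $w$ the permutation sorting it into strict decrease. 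The trailing $n-2$ entries are the fixed distinct integers $n-3, \dots, 0$; $a \ge b$ excludes $a + n - 1 = b + n - 2$; and $a = \otherl - j - k \ge 1 - n$ always, so $a + n - 1 \ge 0$. Hence there is nonzero cohomology exactly when $a \ge -1$ and either $b \ge 0$ or $b \le -(n-1)$.

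It then remains to run these two cases against the hypotheses $0 \le l, \otherl \le \tfrac12 n - 1$, $0 \le k \le n - 1$, with $n$ odd. Since $b = j - l - k \le -k \le 0$, the case $b \ge 0$ forces $k = 0$, $j = l$ and $l \le \otherl$; then $(a,b) = (\otherl - l, 0)$, the sequence $(\otherl - l + n - 1,\, n-2,\, n-3, \dots, 0)$ is already strictly decreasing, so the cohomology is concentrated in degree $0$ and gives the stated $\Ext^0$, while the other values of $j$ (which have $b \in \{-(n-2), \dots, -1\}$) contribute nothing. In the case $b \le -(n-1)$ with $a \ge -1$, the first condition forces $j \le l + k - (n-1)$, hence $l + k \ge n - 1$ and so $k \ge n - 1 - l \ge \tfrac n2$; the second forces $j \le \otherl - k + 1$, hence $k \le \otherl + 1 \le \tfrac n2$; so $k = \tfrac n2$, impossible for $n$ odd. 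Thus this case never occurs, and all $\Ext^p$ not coming from the first case vanish.

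I expect no conceptual obstacle here: the only real work is the Weyl-chamber bookkeeping of the last paragraph, namely checking that the numerical ranges on $l, \otherl, k$ together with the parity of $n$ are exactly tight enough both to suppress higher cohomology in the $k=0$ subcase and to kill the ``exotic'' contribution with $b \le -(n-1)$. (The slightly larger computation needed in Lemma~\ref{lem.fullyfaithful}, which carries an extra $\Sym^\bullet\cO(1)^{\oplus 7}$ factor, is the content of Lemma~\ref{lem.Kuznetsov_extended}.)
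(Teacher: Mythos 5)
Your proposal is correct and takes essentially the same route as the paper, which simply cites Kuznetsov's Lemma 3.5 and records that the proof is a Littlewood--Richardson decomposition into irreducible $\GL(S)$-summands (your Clebsch--Gordan step) followed by Borel--Bott--Weil; your Weyl-chamber bookkeeping, including the use of $k \le n-1$ to ensure $a+n-1 \ge 0$ and the parity-of-$n$ argument eliminating the $b \le -(n-1)$ case, checks out. The only cosmetic remark is that the surviving degree-zero contribution is the vector space $\Sym^{\otherl-l}V^\vee = H^0(\Gr, \Sym^{\otherl-l}S^\vee)$, which is what the statement's shorthand $\Sym^{\otherl-l}S^\vee$ denotes.
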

\begin{proof}This is a specialisation of the result of \cite[Lem.~3.5]{KuznetsovECs} to odd-dimensional $V$, as required in our case. The proof is combinatorial, using the Littlewood--Richardson rule to decompose a bundle on the Grassmannian into direct summands corresponding to irreducible representations of $\GL(S)$, and then the Borel--Bott--Weil theorem (as explained in \cite[\S 3]{KuznetsovECs}) to calculate their cohomology.\end{proof}

\begin{lem}\label{lem.Kuznetsov_extended} In the setting of Lemma~\ref{lem.Kuznetsov} above, but with $k<0$, we have
$$\Ext^{>0}_{\Gr} \big(\Sym^{l} S^\vee,\; \SymOtherl\!\, S^\vee(-k)\big) = 0.$$
\end{lem}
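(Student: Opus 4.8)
The plan is to treat this as a cohomology computation on $\Gr=\Gr(2,V)$ and run the same machine as in the proof of Lemma~\ref{lem.Kuznetsov}. First I would rewrite
\[ \Ext^p_{\Gr}\big(\Sym^{l} S^\vee,\ \SymOtherl S^\vee(-k)\big) \cong H^p\big(\Gr,\ \Sym^{l} S\otimes \SymOtherl S^\vee\otimes\cO(-k)\big), \]
noting that since $k<0$ we are now twisting \emph{up} by a positive power of the Pl\"ucker bundle; in particular Serre duality would land us outside the twist range of Lemma~\ref{lem.Kuznetsov}, so a direct argument seems cleaner. Using the rank-$2$ identity $S\cong S^\vee\otimes\det S$, so that $\Sym^{l}S\cong\Sym^{l}S^\vee\otimes(\det S^\vee)^{-l}$, together with the Pieri rule, the bundle $\Sym^{l} S\otimes \SymOtherl S^\vee\otimes\cO(-k)$ decomposes as a direct sum of Schur bundles $\Schur^{(a_j,b_j)}S^\vee$ with $a_j=\otherl-j-k$ and $b_j=j-l-k$ for $0\le j\le\min(l,\otherl)$. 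Since $a_j-b_j=l+\otherl-2j\ge 0$, each of these is a genuine (if possibly non-effective) irreducible $\GL(S)$-bundle.

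Next I would compute the cohomology of each summand by Borel--Bott--Weil on $\Gr(2,V)$, exactly as in \cite[\S 3]{KuznetsovECs}: the cohomology of $\Schur^{(a,b)}S^\vee$ is governed by the sequence $(a+n-1,\ b+n-2,\ n-3,\ n-4,\ \dots,\ 1,\ 0)$, which gives zero if two entries coincide and cohomology concentrated in degree $0$ if the sequence is strictly decreasing. The tail $n-3,\dots,0$ is already strictly decreasing, and $a_j+n-1$ is strictly larger than every other entry (here $a_j=\otherl-j-k\ge 1$ since $j\le\otherl$ and $-k\ge 1$, and $a_j\ge b_j$). So the only way to land in positive degree is to have $b+n-2$ strictly below the tail and distinct from all of its entries, i.e.\ $b\le-(n-1)$.

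Finally I would observe that this bad case never occurs under the standing hypotheses: $b_j=j-l-k\ge -l-k=|k|-l\ge 1-l$, and $l\le\frac{1}{2}n-1$ then forces $b_j\ge 2-\frac{1}{2}n>1-n$. Hence every summand is either acyclic or has cohomology only in degree $0$, and therefore $\Ext^{>0}_{\Gr}(\Sym^{l} S^\vee,\SymOtherl S^\vee(-k))=0$ (in fact the whole $\Ext$ is concentrated in degree $0$). The only place demanding care is the combinatorial bookkeeping --- getting the Pieri decomposition and the Bott ``shift by $\rho$ and sort'' sequence exactly right --- but this is entirely routine given that the very same apparatus already underlies Lemma~\ref{lem.Kuznetsov}, so I do not expect a genuine obstacle here.
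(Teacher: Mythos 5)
Your proposal is correct and follows essentially the same route as the paper: reduce to showing $\Sym^{l}S\otimes\SymOtherl\, S^\vee(-k)$ has no higher cohomology, decompose it into the Schur summands $\Schur^{(\otherl-j-k,\,j-l-k)}S^\vee$ (the paper peels these off one at a time by induction rather than writing the full Clebsch--Gordan sum), and apply Borel--Bott--Weil, finding that each summand is either dominant (cohomology in degree $0$) or has a repeated entry after adding $\rho$ (acyclic). The only differences are cosmetic (full decomposition versus induction, and a $\rho$ shifted by $(1,\dots,1)$), so nothing further is needed.
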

\begin{proof}It suffices to check that $\Sym^{l} S \otimes \SymOtherl\!\, S^\vee(-k)$ on $\Gr$ has no higher cohomology. Following the proof of \cite[Lem.~3.5]{KuznetsovECs} we have
\begin{equation*}\Sym^{l} S \otimes \SymOtherl\!\, S^\vee(-k) \;\cong\;\Sym^{l-1} S \otimes \Sym^{\otherl-1} S^\vee(-k)\;\oplus\;\Schur^{\otherl-k,-l-k} S^\vee, \end{equation*}
and so we may proceed inductively. We therefore need only check that the Schur power $\Schur^\alpha S^\vee$ has no higher cohomology on the Grassmannian $\Gr$ for $$\alpha = \big(\otherl-k,-l-k,0,\ldots,0\big).$$ The proof then follows by application of the Borel--Bott--Weil theorem, with the following two cases.

\emph{Case $k \leq -l$.} In this case $\alpha$ is a dominant weight, and hence there is no higher cohomology.

\emph{Case $-l < k < 0$.} Using $\rho$ to denote half of the sum of the positive roots of $\GL(n)$ as in \cite{KuznetsovECs}, we have that $$\alpha + \rho = \big(n+\otherl-k, n-l-k-1,n-2,n-3,\ldots,1\big).$$ Our assumptions give that $n-1 > n-l-k-1 > \frac{1}{2}n > 0,$ and hence the second entry in this weight coincides with one of the later ones. By the Borel--Bott--Weil prescription, it follows from this that no cohomology occurs in this case.

This completes the proof of the lemma.\end{proof}

The final stage in the proof of Proposition~\ref{prop:window} is the following.

\begin{lem}\label{lem:iota1essentiallysurjective} $\iota_1^*\colon\cG \to \D(X_1)$ is essentially surjective.
\end{lem}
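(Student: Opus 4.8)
The plan is to show that the restriction $\iota_1^*\cG$ contains a full strong exceptional collection (or at least a generating set) for $\D(X_1)$, so that essential surjectivity follows from the already-established fully-faithfulness of $\iota_1^*$ (Lemma~\ref{lem.fullyfaithful}) together with the fact that a fully faithful exact functor whose image contains a generating set must be essentially surjective on a (small) triangulated category. Recall $X_1 = \Tot(\cO(-1)^{\oplus 7})$ over $\Gr(2,V)$ with projection $q_1$, and that $q_1^*$ identifies $\D(\Gr(2,V))$ with a subcategory of $\D(X_1)$ which generates it together with the tautological section $p$ of $q_1^*\cO(-1)^{\oplus 7}$; more precisely, since $X_1$ is the total space of a vector bundle, $\D(X_1)$ is generated by the pullbacks $q_1^* \cE$ of a generating set $\{\cE\}$ for $\D(\Gr(2,V))$ (one can see this via the Koszul resolution of $\cO_{\Gr}$ by $\Sym^\bullet$ of the dual bundle, or simply because $q_1$ is affine and $q_{1*}\cO_{X_1} = \Sym^\bullet \cO(1)^{\oplus 7}$ generates $\cO_{\Gr}$-mod appropriately). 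So it suffices to know that $\D(\Gr(2,V))$ is generated by the bundles $\Sym^l S^\vee \otimes (\det S^\vee)^m$ with $l \in [0,3)$, $m \in [0,7)$, and that each of these bundles is $\iota_1^*$ of one of the generating bundles $T_{l,m}$ of $\cG$ — which is immediate from the definition $T_{l,m} = \Sym^l S^\vee(m)$ on $\mfX$ and the fact that $\iota_1^* T_{l,m} = q_1^*(\Sym^l S^\vee(m))$.

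The generation of $\D(\Gr(2,V))$ by the rectangle \eqref{eq:rectangularcollection} is exactly the content of Kuznetsov's theorem \cite[Thm.~4.1]{KuznetsovECs}, already cited in the text: these bundles form a full strong exceptional collection on $\Gr(2,7)$ (a Lefschetz collection in his terminology). So the first step is simply to invoke that: $\langle \Sym^l S^\vee(m) : l\in[0,3),\,m\in[0,7)\rangle = \D(\Gr(2,V))$ in the strong sense (cones and shifts, no summands). The second step is to transport this up to $X_1$: I would spell out that for any $\cF \in \D(\Gr(2,V))$, the pullback $q_1^*\cF$ lies in the subcategory generated by $\{q_1^* T : T \in \text{rectangle}\}$, which is a formal consequence of $q_1^*$ being an exact monoidal functor carrying a generating set to a generating set — and then that $\D(X_1)$ is generated by the image of $q_1^*$. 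For the latter one uses that $X_1 \to \Gr$ is an affine morphism with $q_{1*}\cO_{X_1}$ a sheaf of $\cO_{\Gr}$-algebras finitely generated as a module in each graded piece, so any coherent sheaf on $X_1$ is a quotient (hence, resolving, built from cones) of pullbacks from $\Gr$ twisted by the tautological sections; concretely the tautological section $p$ already lies in $\iota_1^*\cG$ up to a twist, or one argues directly that $\cO_{X_1}$ itself is a (finite) extension of the $q_1^*\cO(k)$ via the Koszul-type filtration.

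The step I expect to be the main obstacle — or at least the one requiring genuine care rather than citation — is the reduction from "$\D(X_1)$ is generated by $q_1^*(\text{generators of }\D(\Gr))$ twisted by powers of the tautological section" to "$\D(X_1)$ is generated by $\iota_1^*\cG$ alone", i.e.\ checking that no extra twists are needed because the width $7$ of the rectangle in the $\det S^\vee$ direction exactly matches the rank $7$ of the bundle $\cO(-1)^{\oplus 7}$. Concretely: the structure sheaf $\cO_{X_1}$ has a Koszul resolution whose terms are $q_1^* \wedge^j(\cO(1)^{\oplus 7}) = q_1^*\cO(j)^{\oplus \binom{7}{j}}$ for $j = 0,\dots,7$, and more generally $q_1^*\cF(k)$ for $k < 0$ can be resolved by such terms with twists $q_1^*\cF(k+j)$, $j=0,\dots,7$; combining with Kuznetsov's collection one needs to verify that all the twists $(m)$ that arise stay within $[0,7)$ after reducing modulo the Koszul relations, or equivalently that $\iota_1^*\cG$ is closed under the relevant mutations. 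I would handle this by the standard "window" bookkeeping: the tautological section $p$ gives, for each generator $T_{l,m}$ with $m \geq 1$, a two-term complex relating it to $T_{l,m-1}$ and $T_{l, m+6}$-type bundles, allowing one to stay inside the rectangle — this is the same mechanism that makes $\cG$ a legitimate "window", and it is essentially forced by the numerics $\dim V = 7$. Once this closure is checked, essential surjectivity of $\iota_1^*$ is immediate.
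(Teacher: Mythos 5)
Your overall skeleton---Kuznetsov's fullness on $\Gr(2,V)$ plus the claim that $\D(X_1)$ is generated by pullbacks along $q_1$---matches the paper's, but there are two real problems. First, the generation claim for $\D(X_1)$ is precisely the step that needs an argument, and the one you sketch does not close: surjecting a coherent sheaf on $X_1$ by sums of $q_1^*\cO(-n)$ (using that $q_1$ is affine) and iterating leaves, after truncating at $\dim X_1$, a last syzygy which is a vector bundle on $X_1$ but is not known to be a pullback from $\Gr(2,V)$. That only gives generation up to direct summands, which is not enough here, since $\cG$ is generated in the strong sense (shifts and cones, no summands) and essential surjectivity of $\iota_1^*$ requires honest finite resolutions. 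The paper's device for this step, which is missing from your proposal, is to extend the coherent sheaf from the open substack $X_1$ to $\mfX$, use that on the linear quotient stack $\mfX$ every coherent sheaf has a finite resolution by the bundles $T_{l,m}$ associated to $\GL(S)$-representations (with $l,m$ arbitrary), and then restrict that resolution back to $X_1$, where $\iota_1^*T_{l,m}=q_1^*\bigl(\Sym^l S^\vee(m)\bigr)$.

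Second, the step you single out as the ``main obstacle'' is not an obstacle, and the mechanism you propose for it is incorrect. Once $\D(X_1)$ is known to be generated by the pullbacks $q_1^*\bigl(\Sym^l S^\vee(m)\bigr)$ for arbitrary $l\ge 0$ and $m\in\Z$, fullness of Kuznetsov's collection on $\Gr(2,V)$ already expresses each such bundle, on the Grassmannian, as a finite complex built from the rectangle \eqref{eq:rectangularcollection}; applying the exact functor $q_1^*$ transports this to $X_1$, so no control on the twists is needed at all. The coincidence between the width $7$ of the rectangle and the rank of $\cO(-1)^{\oplus 7}$ plays no role in this lemma (it is relevant to the window phenomenon on the $X_2$ side, not to essential surjectivity of $\iota_1^*$), and the tautological section $p$ does not give a two-term complex relating $T_{l,m}$ to $T_{l,m-1}$ and $T_{l,m+6}$-type bundles: its Koszul complex resolves the structure sheaf of the zero section $\Gr(2,V)\subset X_1$, which is a different, and here unnecessary, kind of relation. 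No ``closure under mutations'' needs to be checked.
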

\begin{proof}
This is the statement that the set of vector bundles on $X_1$ corresponding to the set \eqref{eq:rectangularcollection} of $\GL(S)$-representations generate the derived category $\D(X_1)$. This may be deduced from the fact that the corresponding set of vector bundles on $\Gr(2,V)$ generates the derived category $\D(\Gr(2,V))$ by Kuznetsov's result \cite[Thm.~4.1]{KuznetsovECs}, as follows.

First note that any coherent sheaf $\cE$ on $X_1$ extends to a coherent sheaf $\cE'$ on $\mfX$, and since $\mfX$ is smooth this extension $\cE'$ has a finite resolution by vector bundles. Furthermore, the only vector bundles which occur are the $T_{l,m}$ associated to $\GL(S)$-representations, as $\mfX$ is a quotient of a vector space by $\GL(S)$. Restricting this resolution via the inclusion $\iota_1\colon X_1 \into \mfX$ we obtain a finite resolution of $\cE$ on $X_1$ by the $\iota_1^*T_{l,m}$. We have that \[\iota_1^*T_{l,m} = \stackProjection{1}^* \left(\Sym^l S^\vee (m) \right),\] so that the $\iota_1^*T_{l,m}$ are pullbacks via  the projection $\stackProjection{1}\colon X_1 \to \Gr(2,V)$ of the bundles $\Sym^l S^\vee (m)$ on $\Gr(2,V)$. These latter bundles are themselves resolved by Kuznetsov's full exceptional collection corresponding to the set \eqref{eq:rectangularcollection} of $\GL(S)$-representations, and hence we deduce the result.
\end{proof}

This concludes the proof of Proposition~\ref{prop:window}.

\begin{rem}\label{rem:branesonfibres}
Recall that we're making an \emph{ad hoc} definition of the category of B-branes on $X_2$ as
$$\Br(X_2) := \iota_2^* \cG = \Window{\iota_2^* T_{l,m}}{3}{7} \;\;\subset\;\; \D(X_2).$$
Let's explain why this definition is not totally unreasonable. We have that $X_2$ is a bundle over $\P^6$, with fibres 
$$\stackFibre = \quotstackbig{\Hom(S, V)}{\SL(S)},$$
and so we should expect $\Br(X_2)$ to be some kind of product of $\Br(\P^6) = \D(\P^6)$ with some category $\Br(\stackFibre)$ of B-branes on the fibres. The derived category of $\P^6$ is generated by the Beilinson exceptional collection
$$\setcondsbig{\iota_2^* T_{0,m}=\cO(m)}{m\in[0,7)},$$
so what we're implicitly doing is declaring that
$$\Br(\stackFibre) = \left\langle \cO,\; S^\vee,\; \Sym^2 S^\vee\right\rangle \;\;\subset\;\; \D(\stackFibre).$$
We don't have a justification for this definition either, but it does satisfy
 $$\rk K_0(\Br(\stackFibre))=3$$ 
which matches Hori--Tong's calculation of the Witten index for the gauge theory described by $\stackFibre$, see \cite[Table 1]{HT}.
\end{rem}

\begin{rem}\label{rem:windowsinotherdims}
Let's briefly discuss how one might adapt this argument if we were to vary the dimensions of $S$ and $V$, making them $r$ and $d$ respectively. The general theory of \cite{HL, BFK} still gives us an embedding of $\D(X_1)$ into $\D(X_2)$, but as before it tells us very little about the image. So we should ask to what extent our more explicit methods can be adapted.

 If we keep $r=2$ and $d$ odd then everything works essentially verbatim, using the rectangular window
$$\WindowSet{ \Sym^l S^\vee \otimes (\det S^\vee)^m}{\tfrac{1}{2}(d-1)}{d}.$$
Now let's keep $r=2$, but make $d$ even. Something goes wrong even at the crude heuristic level of Remark~\ref{rem:branesonfibres}, because now $d$ does not divide $\binom{d}{r}$. Mathematically, it seems sensible to declare that $\Br(X_2)$ is the subcategory generated by the rectangle
$$\WindowSet{ \Sym^l S^\vee \otimes (\det S^\vee)^m }{\tfrac{1}{2} d}{d}.$$
If we delete $\tfrac12 d$ bundles from the corner of this rectangle then we get  Kuznetsov's (non-rectangular) Lefschetz exceptional collection on $\Gr(2, d)$, and we see that we obtain an embedding of $\D(X_1)$ into $\Br(X_2)$, rather than an equivalence. This definition allows us to recover a result of Kuznetsov in the case $r=2$ and $d=6$ (see Remark~\ref{rem:pfaffianforotherdimensions}). Unfortunately, this definition does not appear to be compatible with the results of \cite{HT}. It suggests that the category of B-branes on the fibre $\stackFibre$ should be generated by 
$$ \setconds{\Sym^l S^\vee}{l\in \left[0,\tfrac12 d\right)}$$
but Hori--Tong calculate the Witten index of the corresponding gauge theory to be $(\tfrac12 d-1)$, not $\tfrac12 d$. It would be very interesting to understand why these two approaches seem to give different answers.

If we make $r>2$ then we can presumably make some mathematical progress using Fonarev's Lefschetz exceptional collections on $\Gr(r, d)$ \cite{Fonarev}, but the discrepancy with Hori--Tong's calculation becomes even worse.
\end{rem}

\subsection{With the superpotential}\label{sect:withthesuperpotential}

We'll now explain how to modify the constructions of the previous section when we add in the superpotential $W$, and the non-trivial R-charge described in Example~\ref{eg:specifyingRcharge}. Specifically, we'll show that we have an embedding
$$\D(X_1, W)  \into \D(X_2, W).$$
The construction of this embedding follows closely our construction of the embedding $\D(X_1)\into \D(X_2)$. Suppose we have some matrix factorization $E\in \D(\mfX, W)$ on the ambient Artin stack. The underlying vector bundle of $E$  must be a direct sum of shifts of the bundles $T_{l,m}$, since these are the only vector bundles on $\mfX$. To define the analogue of the window $\cG$, we just restrict which vector bundles $T_{l,m}$ we are allowed to use. Namely, we define
$$\cG_W\subset \D(\mfX, W)$$
to be the full subcategory whose objects are (homotopy equivalent to) matrix factorizations whose underlying vector bundles are direct sums of shifts of the vector bundles $T_{l,m}$, where $l\in [0,3)$ and $m\in [0,7)$, as in \eqref{eq:window}. We then have the following.

\begin{prop} The restriction functor
$$\iota_1^*\colon \cG_W \to \D(X_1, W) $$
is an equivalence, and  the restriction functor
$$\iota_2^*\colon \cG_W \to \D(X_2, W) $$
is fully faithful.
\end{prop}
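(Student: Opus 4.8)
The plan is to mimic the proof of Proposition~\ref{prop:window} step by step, carrying the superpotential along. The key structural point is that morphisms in $\D(\mfX,W)$, $\D(X_1,W)$ and $\D(X_2,W)$ are computed by the same complexes that govern the $W=0$ case, only with a twisted differential. Concretely, for matrix factorizations $E$, $F$ built from the $T_{l,m}$, the derived $\hom$ sheaf $\RDerived\hom(E,F)$ on $(\mfX,0)$ (and likewise after restriction to $X_i$) is the sheaf $\hom(E,F)$ with its curved differential, and $\Hom^\bullet_{\D(-,W)}(E,F) = \RDerived\Gamma \RDerived\hom(E,F)$. Since $E$ and $F$ are finite direct sums of shifts of the $T_{l,m}$, the underlying graded sheaf $\hom(E,F)$ is a finite direct sum of shifts of the bundles $T_{l,m}^\vee\otimes T_{l',m'}$ with $l,l'\in[0,3)$, $m,m'\in[0,7)$. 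Lemma~\ref{lem.fullyfaithful} says that for each such summand, $\RDerived\Gamma_\mfX$ and $\RDerived\Gamma_{X_i}$ agree and are concentrated in degree zero; so the whole spectral sequence (filtering $\hom(E,F)$ by its graded pieces) computing cohomology on $\mfX$ versus on $X_i$ has identical $E_1$-pages, and the restriction map is a quasi-isomorphism of the (twisted) totalizations. This gives fully-faithfulness of both $\iota_1^*$ and $\iota_2^*$ on $\cG_W$.

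More carefully, I would phrase the fully-faithful part as follows. Given $E,F\in\cG_W$, one has a finite filtration of the curved dg-sheaf $\underline{\RDerived\hom}_\mfX(E,F)$ whose associated graded is a direct sum of (shifts of) the bundles $T_{l,m}^\vee\otimes T_{l',m'}$ placed in $(\mfX,0)$. Applying $\RDerived\Gamma$ term by term and using Lemma~\ref{lem.fullyfaithful}'s computations (plus the Hartogs' argument for $\Ext^0$ and the vanishing of higher $\Ext$ after restriction), we see $\RDerived\Gamma_\mfX \to \RDerived\Gamma_{X_i}$ is a quasi-isomorphism on each graded piece, hence on the filtered object, hence $\Hom^\bullet_{\D(\mfX,W)}(E,F)\xrightarrow{\ \sim\ }\Hom^\bullet_{\D(X_i,W)}(\iota_i^* E,\iota_i^* F)$. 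This is exactly fully-faithfulness of $\iota_i^*$ restricted to $\cG_W$, for $i=1,2$.

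For the equivalence $\iota_1^*\colon\cG_W\xrightarrow{\sim}\D(X_1,W)$ it remains to prove essential surjectivity. Here I would argue: any object of $\D(X_1,W)$ is represented by a matrix factorization $E_1$, i.e.\ a $\C^*_R$-equivariant vector bundle on $X_1$ with a curved differential. Since $X_1\subset\mfX$ has complement of codimension $\geq 2$ and $\mfX$ is smooth, the underlying bundle extends over $\mfX$, and every vector bundle on $\mfX$ is a sum of the $T_{l,m}$ (for arbitrary $l,m$, not just in the window). So $E_1$ is the restriction of a matrix factorization $E$ on $(\mfX,W)$ built from \emph{arbitrary} $T_{l,m}$. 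Now I replace each $T_{l,m}$ outside the window by a finite complex of bundles inside the window: on the Grassmannian side this is precisely Kuznetsov's full exceptional collection \cite[Thm.~4.1]{KuznetsovECs} (as used in Lemma~\ref{lem:iota1essentiallysurjective}), pulled back along $q_1\colon X_1\to\Gr(2,V)$ and then, crucially, promoted through the curved differential by a standard ``convolution/totalization'' argument: any matrix factorization built from bundles admitting finite resolutions by a fixed collection is itself homotopy equivalent to a matrix factorization built from that collection (one resolves the underlying bundle and transfers the curved differential along the homotopy equivalence, or equivalently does a finite iterated cone construction). This shows $E$ is equivalent in $\D(\mfX,W)$ to an object of $\cG_W$, and restricting, $E_1\in\iota_1^*\cG_W$.

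The main obstacle I anticipate is making the ``promote a resolution through the curved differential'' step rigorous: unlike the $W=0$ case, one cannot simply take a quasi-isomorphic complex of sheaves, because $\D(\mfX,W)$ is a quotient by acyclics of curved (not honestly differential-graded) objects. The clean way is to work with the explicit window $\cG_W$ as a subcategory of matrix factorizations and observe that it is closed under the operations needed (shifts, cones), then note that if $E$ is a matrix factorization whose underlying bundle sits in a short exact sequence $0\to B'\to B\to B''\to0$ with $B',B''$ sums of windowed bundles, then $E$ is an extension/cone of windowed matrix factorizations — and iterate down Kuznetsov's finite resolutions. I would also double-check that the R-charge bookkeeping (the shifts $\cO[k]$) is consistent throughout, since the $(\det S^\vee)^m$ twists interact with the weight-$2$ action on $\Hom(V,\wedge^2 S)$; this is routine but needs a sentence. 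Everything else is a direct transcription of Section~\ref{sect:without} with the differential $d_E$ inert in all the cohomology computations because those are done graded-piece by graded-piece.
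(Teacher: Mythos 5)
Your fully-faithfulness argument is essentially the paper's own: morphisms over $(X,W)$ are computed from the $W=0$ morphism complexes by a perturbed differential, and a finite filtration reduces the comparison $\RDerived\Gamma_\mfX\to\RDerived\Gamma_{X_i}$ to the $\Ext$ computations of Lemma~\ref{lem.fullyfaithful}; the paper does exactly this by invoking \cite[Lem.~3.4]{Segal}. The gap is in essential surjectivity, and it is twofold. First, your starting point is false: a vector bundle on $X_1$ need not extend to a vector bundle on $\mfX$. Coherent sheaves extend, but the extension is in general not locally free; for instance the pullback to $X_1$ of the tautological quotient bundle of $\Gr(2,V)$ is a vector bundle on $X_1$ that is not the restriction of any direct sum of the $T_{l,m}$ (those restrictions are exactly the pullbacks of the $\Sym^l S^\vee(m)$). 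So not every object of $\D(X_1,W)$ is the restriction of a matrix factorization on $\mfX$ built from arbitrary $T_{l,m}$. This is repairable -- the paper extends only the underlying coherent sheaf and resolves it on $\mfX$, as in Lemma~\ref{lem:iota1essentiallysurjective} -- but as written the step fails.

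Second, and more seriously, the step you yourself flag as the main obstacle is handled by an incorrect argument. It is not true that if the underlying bundle of a matrix factorization sits in a short exact sequence $0\to B'\to B\to B''\to 0$ with $B',B''$ sums of windowed bundles, then the matrix factorization is an extension or cone of windowed matrix factorizations: the curved differential has no reason to preserve the sub-bundle $B'$ (already for $B=\cO\oplus\cO[1]$ with differential given by $f,g$ with $fg=W$, the summand $\cO$ is not preserved). Likewise, a resolution of the underlying bundle is a quasi-isomorphism of complexes, not a homotopy equivalence along which one can simply ``transfer'' the curved differential. The correct tool is the paper's Lemma~\ref{lem:resolvingintowindows}, i.e.\ the homological perturbation argument of \cite[Lem.~3.6]{Segal}: if the underlying sheaf admits a finite $\C^*_R$-equivariant resolution by bundles from a collection satisfying $\Ext^{>0}(E_i,E_j)=0$ for all $i,j$, then the differential of that resolution can be perturbed to a matrix factorization equivalent to the given object. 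The $\Ext$-vanishing hypothesis -- supplied on $X_1$ for the windowed bundles by Lemma~\ref{lem.fullyfaithful} -- is exactly what makes the transfer possible, and it never enters your argument. Without this input (or a proof of an equivalent perturbation statement), your essential surjectivity argument does not go through.
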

\begin{proof}
This follows from Proposition~\ref{prop:window}, using the arguments from \cite[\S 3.1]{Segal}. Fully-faithfulness is straightforward; we can use the proof of [\textit{ibid.}, Lem.~3.4] verbatim. The key point is that morphisms on any Landau--Ginzburg model $(X,W)$ can be computed, via a spectral sequence, from morphisms on the model $(X,0)$.

The essential surjectivity of $\iota_1^*$ follows from Lemma~\ref{lem:resolvingintowindows} below, since we proved in Lemma~\ref{lem:iota1essentiallysurjective} that  any sheaf on $X_1$ can be resolved by 
vector bundles from the set \eqref{eq:rectangularcollection}, and this resolution can evidently be chosen to be $\C^*_R$-equivariant.
\end{proof}

\begin{lem}\label{lem:resolvingintowindows}
Let $(X,W)$ be a LG B-model, and let $E_0,\ldots,E_k$ be a collection of $\C^*_R$-equivariant vector bundles on $X$ such that
$$\Ext^{> 0}_X (E_i, E_j) = 0, \;\;\;\;\forall i, j $$
 in the ordinary derived category of $X$ (i.e.\ ignoring the R-charge grading). Now let
$$(\cE, d_{\cE})\in \D(X, W)$$
be an object such that the underlying sheaf $\cE$ has a finite $\C^*_R$-equivariant resolution by copies of shifts of the bundles $E_i$. Then $(\cE, d_{\cE})$ is equivalent to a matrix factorization whose underlying vector bundle is a direct sum of copies of shifts of the $E_i$.
\end{lem}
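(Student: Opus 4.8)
The plan is to build the desired matrix factorization by an explicit "resolution-then-totalization" argument. Start with the given finite $\C^*_R$-equivariant resolution of the underlying sheaf $\cE$ by shifts of the $E_i$:
\[
0 \to P_n \to \cdots \to P_1 \to P_0 \to \cE \to 0,
\]
where each $P_j$ is a direct sum of shifts $E_i[k]$. The issue is that this is a resolution of the plain sheaf, ignoring the curved differential $d_\cE$; I want to promote it to a resolution inside $\QCohdgnv(X,W)$, i.e.\ lift the maps $P_j \to P_{j-1}$ to a twisted complex that maps quasi-isomorphically to $(\cE, d_\cE)$. The key input is the vanishing $\Ext^{>0}_X(E_i,E_j)=0$, which guarantees that $\RDerived\hom_X(P_j, P_{j'})$ is concentrated in degree zero (as a complex on $(X,0)$), so that the obstruction groups to lifting the differential and the chain map stage by stage vanish. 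Concretely: write the curved differential one is trying to build as $D = d_0 + d_1 + d_2 + \cdots$, where $d_0$ is the internal differential on each $P_j$ coming from its expression as a sum of shifts, $d_1$ is the lift of the resolution maps, and the higher $d_r$ (raising resolution degree by $r$) are correction terms forced by $D^2 = W\cdot\id$. At each stage the failure of $D^2 = W\cdot\id$ is a morphism of the appropriate degree into a complex with no higher cohomology, hence a coboundary, so the next correction term exists; the argument is the standard homological-perturbation / obstruction-theory induction, and it terminates in the relevant degrees because the resolution is finite.

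Having produced a twisted complex $(\tilde P_\bullet, D)$ built from the $E_i$ together with a closed degree-zero map $\tilde P_\bullet \to (\cE, d_\cE)$, I next check this map is an equivalence in $\D(X,W)$. This is immediate from the construction: forgetting $W$, the map is a quasi-isomorphism of complexes of sheaves by construction (it refines the original resolution), so its cone is acyclic in the sense of \S\ref{sect:catsofcurveddgsheaves}, hence zero in $\D(X,W)$. Finally, I form the totalization $E := \Tot(\tilde P_\bullet)$, which is a genuine curved dg-sheaf whose underlying graded sheaf is $\bigoplus_j P_j$, a finite direct sum of shifts of the $E_i$ — in particular a finite-rank vector bundle with a curved differential, i.e.\ an honest matrix factorization — and which is equivalent to $(\cE, d_\cE)$. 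That gives the statement.

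The main obstacle is the inductive lifting of the differential: one must be careful that the vanishing hypothesis is used in the right form. The hypothesis is $\Ext^{>0}$-vanishing in the \emph{ordinary} derived category of $X$, i.e.\ for the sheaves $E_i$ with the R-charge grading forgotten, so the relevant statement is that $\RDerived\hom_X(E_i, E_j)$, as a curved dg-sheaf on $(X,0)$, has homology only in degree zero; one then notes that the correction terms $d_r$ for $r\geq 2$ are sections of sheaves of the form $\hom_X(P_j, P_{j-r+1})$, and the obstruction to choosing them lives in $\RDerived^1$ of such a sheaf of morphisms, which vanishes. The bookkeeping with the two gradings (homological/resolution degree versus R-charge) needs to be done carefully so that the resulting $D$ genuinely has R-charge $1$ and squares to $W$; but no new idea beyond the grade-restriction arguments of \cite{Segal} and the standard totalization formalism of \S\ref{sect:catsofcurveddgsheaves} is required. (Alternatively, one can phrase the whole thing more slickly: the subcategory of $\D(X,W)$ generated by the $E_i$ is, by the $\Ext$-vanishing, equivalent to the homotopy category of finite twisted complexes over the curved algebra $\bigoplus_{i,j}\Hom_X(E_i,E_j)$, and the object $(\cE,d_\cE)$ lies in it precisely because $\cE$ does after forgetting $W$; but the hands-on version above is closer to what is needed elsewhere in the paper.)
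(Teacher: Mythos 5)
Your proposal is correct and is essentially the paper's own argument: the paper proves this lemma simply by citing the proof of \cite[Lem.~3.6]{Segal}, whose content is exactly the perturbation you describe — extend the resolution differential by higher correction terms, with obstructions killed by the $\Ext^{>0}$-vanishing among the $E_i$, and observe that the resulting matrix factorization is equivalent to $(\cE,d_\cE)$. Your write-up just carries out in detail what the paper delegates to that reference (including the footnoted remark that the argument works for sheaves, not only bundles).
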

\begin{proof}
This is proved in \cite[proof of Lem.~3.6]{Segal}. It's shown there that it's possible to perturb the differential in the resolution of $\cE$ until it becomes a matrix factorization for $W$ which is equivalent to $(\cE, d_{\cE})$.\footnote{The proof in that paper is stated for the case that $\cE$ is a vector bundle, but it works for sheaves without modification. The argument is also independent of which dg model we choose for $\Perf(X,W)$.}
\end{proof}

We define the category
$$\Br(X_2, W) \subset \D(X_2,W) $$
to be the image of $\cG_W$ under $\iota_2^*$, and we claim that this is the correct category of B-branes for the LG model $(X_2,W)$.

This concludes our discussion of the second equivalence $\Psi_2$.

\section{The Pfaffian side}\label{sect:pfaffian}

In this final section we complete our proof that $\D(Y_1) \cong \D(Y_2)$ by establishing the equivalence $\Psi_3$. To do this we construct an embedding
$$\D(Y_2) \into \D(X_2, W) $$
whose image is the subcategory $\Br(X_2, W)$ defined in the previous section. 

Recall that $X_2$ is the Artin stack 
\[ X_2 = \quotstackBig{ \setcondsbig{(x,p) \in \Hom(S,V) \oplus \Hom(V,\wedge^2 S)}{p \ne 0}}{\GL(S)} \]
and that it is equipped with the superpotential
\[W(x,p) = p \circ A \circ \wedge^2 x, \]
where $A\colon\wedge^2\!V \to V$ is a surjection satisfying Assumption \ref{assm:generic} that we've fixed throughout the paper. For this section, we'll let $\pi$ denote the projection
\[ \setlength \arraycolsep {2pt}
\begin{array}{rccl}
\pi\colon & X_2 & \to & \P\Hom(V,\wedge^2 S) \cong \P^6. \\
& (x,p) & \mapsto & [p] 
\end{array} \]
This makes $X_2$ into a Zariski locally-trivial bundle of stacks with fibre
\begin{equation} \label{eq:stackfibre}
\stackFibre := \quotstackbig{\Hom(S,V)}{\SL(S)}.
\end{equation}
To see this, observe that the preimage of a standard affine chart $\A^6\subset \P^6$ is the stack
\begin{align*}
X_2|_{\A^6}
&= \quotstackbig{\Hom(S,V)\times \C^*\times \A^6}{\GL(S)}   \\
&\cong \quotstackbig{\Hom(S,V)}{\SL(S)}\times \A^6.
\end{align*}
For another point-of-view, we can consider $X_2$ as a vector bundle
$$q_2: X_2 \to \mathcal{P} $$
over the stack
$$\mathcal{P}=\quotstackbig{\wedge^2\!S^{\oplus 7} - \{0\} }{\GL(S)}$$
(this was mentioned briefly in the proof of Lemma \ref{lem.fullyfaithful}). Then we can factor $\pi$ as $\delta\circ q_2$, where $\delta$ is the forgetful map
$$\delta: \mathcal{P} \to \P^6$$
sending $\mathcal{P}$ to its underlying scheme. The map $\delta$ is a Zariski locally-trivial bundle of stacks with fibre $B\SL_2$.

Note that the $\C^*_R$ action on $X_2$ preserves each fibre of $\pi$, and if we write a fibre using the atlas \eqref{eq:stackfibre} then the action on $\stackFibre$ is just dilation (with weight 1). This is because letting $\C^*_R$ act with weight 2 on $p$ is equivalent to letting it act with weight 1 on $x$, up to the action of $\GL(S)$.

\subsection{Heuristics and strategy} \label{sect:heuristicargument}
Fix a point $[p]\in \P^6$. On the fibre $ X_2|_{[p]} \cong \stackFibre $ over this point the superpotential is a quadratic form:
\[ W_p(x) := p \circ A \circ \wedge^2 x. \]
Since the $\C^*_R$ action preserves the fibre, the pair $(\stackFibre, W_p) $ is a LG B-model in its own right.  If the quadratic form $W_p$ were non-degenerate then our discussion of Kn\"orrer periodicity in Section~\ref{sect:familiesfirstversion} would lead us to study $\SL(S)$-invariant, maximally isotropic subspaces
\[ \maxiso_p \subset \Hom(S,V) \]
in order to understand $\D(\mathfrak{F},W_p)$. In fact $W_p$ is degenerate, but previous experience \cite{ASS} suggests that this is still a sensible thing to do.

To ensure $\SL(S)$-invariance, we need to take $M_p=\Hom(S, L_p)$, where $L_p \subset V$ is maximally isotropic for the 2-form
\[ \twoform{p} := p \circ A \]
on $V$.  The rank of this 2-form is 6 for a generic $[p]$, and it drops to 4 precisely when $[p] \in Y_2$. Since $A$ is generic, it never drops to 2.  Thus if $[p] \notin Y_2$ then a maximal $L_p$ has dimension 4 and a maximal $M_p$ dimension 8, but if $[p] \in Y_2$ then $\dim L_p$ jumps up to 5 and $\dim M_p$ to 10.

In fact, we will restrict attention to maximally isotropics $M_p$ for $[p]\in Y_2$, for the reasons we now explain. Our results from the previous section (see in particular Remark~\ref{rem:branesonfibres}) suggest that we should focus on the `window' subcategory
$$\Br(\mathfrak{F}, W_p)\subset \D(\mathfrak{F}, W_p)$$
consisting of (objects isomorphic to) matrix factorizations built only out of the three vector bundles $\cO$, $S$ and $\Sym^2 S$. This category is, in some sense, the fibre of the category $\Br(X_2, W)$ at the point $[p]$. Consequently we only care about those maximally isotropics $M_p$ that define objects in the subcategory $\Br(\mathfrak{F}, W_p)$.

The sheaf $\cO_{M_p}$ has a Koszul resolution with underlying vector bundle
\begin{equation} \label{eqn.koszul_for_supp_argument}
\wedge^\bullet \big(\Hom(S, V/L_p)^\vee\big).
\end{equation}
Perturbing the Koszul differential as in Lemma~\ref{lem:resolvingintowindows}, we find that $\cO_{M_p}\in\D(\stackFibre, W_p)$ is equivalent to a matrix factorization with this same underlying vector bundle. Then we use the formula for the exterior algebra of a tensor product \cite[Cor.~2.3.3]{Weyman} to find that the representations of $\SL(S)$ occurring in \eqref{eqn.koszul_for_supp_argument} are $\Sym^t S$, for 
$$0 \le t \le \dim(V/L_p).$$
To get $\cO_{M_p} \in \Br(\cF, W_p)$ it appears that we need to have $\dim(V/L_p) = 2$, and hence $[p] \in Y_2$. So if we believe this heuristic argument, the category $\Br(X_2, W)$ is concentrated over the Pfaffian locus $Y_2$.
\opt{amsart}{\pgap}
 
In the spirit of Section~\ref{sect:familiesfirstversion}, a continuous choice of $L_p$ for all $[p] \in Y_2$ will give us a functor $\D(Y_2) \to \Br(X_2,W)$ sending $\cO_{[p]}$ to $\cO_{M_p}$. We claim that this functor is in fact fully faithful. This is essentially equivalent (see Remark~\ref{rem:embeddings}) to the claim that each object $\cO_{M_p}$ behaves like the point sheaf $\cO_{[p]}$, i.e.
\[\Hom^\bullet_{\D(X_2,W)}(\cO_{M_p},\cO_{M_p}) \cong \Hom^\bullet_{\D(Y_2)}(\cO_{[p]},\cO_{[p]}),\]
or alternatively to the claim that the whole family $\cO_M$ behaves like the structure sheaf $\cO_{Y_2}$, i.e.
\[\pi_* \RDerived\hom_{X_2}(\cO_M, \cO_M) \cong \cO_{Y_2}. \]
A suitable version of this claim will be proved in Proposition~\ref{prop:maxisocalculation}, but let's briefly discuss why it is true. If each quadratic form $W_p$ were non-degenerate then it would be standard Kn\"orrer periodicity, and each object $\cO_{M_p}$ would be point-like in the fibrewise directions. However since $W_p$ is degenerate this is not true: viewed as an object on $(\mathfrak{F}, W_p)$ the curved dg-sheaf $\cO_{M_p}$ is not point-like -- it in fact looks like the skyscraper sheaf along the kernel of $W_p$. Fortunately this calculation is misleading, because if we view $\cO_{M_p}$ as an object on $(X_2, W)$ then we must also take account of the derivatives of $W$ in the directions transverse to the fibre. As we shall see, these transverse directions exactly cancel the degenerate directions of $W_p$, leaving a suitably point-like object. 

Next we face another issue, which is that the spaces $L_p$, and hence $M_p$, can be chosen locally on $Y_2$ but not globally.  One approach to overcoming this would be to take local choices and glue them to give a global embedding. Instead we replace each $\cO_{M_p}$ with an equivalent object $\cO_{\Gamma_p} \in \D(\stackFibre, W_p)$ which involves no choices and thus is easy to globalize to a family $\Gamma$. We define $\Gamma$ in Definition~\ref{defn:Gamma} and show in Proposition~\ref{prop.rank1vsmaximallyisotropic} that $\cO_{M_p}$ is equivalent to the new object $\cO_{\Gamma_p}$.

In Section~\ref{sec:finale} we fill in the final details that $\Gamma$ gives us an embedding $\D(Y_2) \to \D(X_2, W)$ whose image is $\Br(X_2,W)$.  We conclude with some remarks on varying the dimensions of $S$ and $V$, and on homological projective duality.

\subsection{The critical locus}

We start by analyzing the critical locus of $W$ on $X_2$. This means we take the critical locus of $W$ on the atlas
$$\Hom(S,V)\times \left(\Hom(V, \wedge^2 S) - \{0\}\right)$$
and form the stack quotient of it by $\GL(S)$.\footnote{The result is independent of our choice of atlas. If $[Z/G]$ is a quotient stack with $Z$ smooth, and $W$ is a $G$-invariant function on $Z$, then at any point $z\in Z$ the derivative $dW|_z$ defines a closed element of the cotangent complex  $L_z = [T^\vee Z \to \mathfrak{g}^\vee]$. We're considering the substack where this element is zero, and this is invariant since the cotangent complex is an invariant of the atlas up to quasi-isomorphism.}

\begin{prop}\label{prop.Pfcritical} 
Let $x \in \Hom(S,V)$ and $p \in \Hom(V,\wedge^2 S)-\{0\}$.  Then $(x,p)$ is a critical point of $W$ if and only if $\Im(x) \subset \ker \twoform{p}$ and $\rk(x) \le 1$.
\end{prop}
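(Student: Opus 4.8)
The plan is to compute the derivative of $W$ on the atlas
$$\Hom(S,V)\times\bigl(\Hom(V,\wedge^2 S)-\{0\}\bigr)$$
and extract the critical-point equations, remembering that on a quotient stack $[Z/G]$ the relevant object is not $dW|_z$ as a covector but its image in the cotangent complex $L_z = [T^\vee_z Z \to \mathfrak g^\vee]$; equivalently, $(x,p)$ is critical iff $dW|_{(x,p)}$ vanishes on the subspace $T_{(x,p)}Z$ modulo the tangent space to the $\GL(S)$-orbit. So first I would differentiate $W(x,p) = p\circ A\circ \wedge^2 x$ separately in the $p$-direction and the $x$-direction. Writing $\omega_p = p\circ A$ for the induced $2$-form on $V$, the function is $W = \omega_p(\wedge^2 x)$, i.e. $W = \sum \omega_p(x e_i, x e_j)$ over a basis of $\wedge^2 S$, which is manifestly quadratic in $x$ and linear in $p$.

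Next I would read off the two sets of equations. Differentiating in $p$: the derivative in a direction $\dot p\in\Hom(V,\wedge^2 S)$ is $\dot p\circ A\circ\wedge^2 x$, which must vanish for all $\dot p$; since $A$ is surjective, $A\circ\wedge^2 x$ ranges over all of $V$ as a function of... no — rather, $\wedge^2 x\colon \wedge^2 S\to\wedge^2 V$ has rank $\le 1$ automatically (as $\dim S = 2$, $\wedge^2 S$ is a line), so $A\circ\wedge^2 x$ is a single vector $v\in V$, and the $p$-equation says $\dot p(v)=0$ for all $\dot p\in\Hom(V,\wedge^2 S)$, forcing $v = A(\wedge^2 x)=0$, i.e. $\wedge^2 x\in\ker A$. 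Differentiating in $x$: the derivative in direction $\dot x\in\Hom(S,V)$ is $2\,\omega_p(\wedge^2 x \text{-type terms})$, concretely $\sum_i \omega_p(x e_i,\dot x e_i)$ up to constants, i.e. it is the pairing of $\dot x$ with $\omega_p\circ x\in\Hom(S,V^\vee)$. Here is where the stack enters: this covector need not vanish, but only vanish modulo the $\GL(S)$-coadjoint directions, which are exactly the covectors of the form $\dot x\mapsto \operatorname{tr}(\xi\cdot(\text{something}))$ coming from $x\circ\mathfrak{gl}(S)$. Unwinding this, the $x$-equation becomes: $\omega_p(x a, x b)$ defines, via $\langle\omega_p\circ x,\,x\circ(-)\rangle$, an element of $\mathfrak{gl}(S)^\vee \cong \mathfrak{gl}(S)$ that must vanish — and one computes this element is, up to scalar, $\wedge^2 x$ acting through $\omega_p$, which again lands in the line $\wedge^2 S$ and vanishes precisely when $\omega_p(\wedge^2 x)=0$.

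Then I would combine the two conditions and translate them into the geometric statement. The condition "$\wedge^2 x \in \ker A$ together with the $x$-equation" should be shown equivalent to "$\Im(x)\subset\ker\omega_p$ and $\rk x\le 1$". One direction: if $\rk x\le 1$ then $\wedge^2 x = 0$, so $\wedge^2 x\in\ker A$ is automatic, and the $x$-equation reduces to $\omega_p(\Im x, \Im x)=0$... but we need the stronger $\Im x\subset\ker\omega_p$, so here I would use that the $x$-derivative equation, read carefully, says $\omega_p(xa, xb)$ with $a$ ranging over $S$ and $b$ arbitrary — wait, it pairs against all $\dot x$, so it genuinely gives $\omega_p(\Im x, V) \subset$ (image of $x$), forcing $\Im x\subset\ker\omega_p$ once $\rk x\le 1$ is known; and $\rk x\le 1$ itself must be extracted from the combination, presumably because if $\rk x = 2$ then $\wedge^2 x\neq 0$ and transversality/genericity of $A$ (Assumption~\ref{assm:generic}) forces $A(\wedge^2 x)\neq 0$, contradicting the $p$-equation — indeed $\wedge^2 x\neq0$ means $\wedge^2 x$ spans a generic line in $\wedge^2 V$ meeting $\ker A$ only at $0$. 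Conversely, if $\Im x\subset\ker\omega_p$ and $\rk x\le 1$, both equations hold trivially.

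\textbf{Main obstacle.} The delicate point is the stacky bookkeeping in the $x$-direction: one must correctly identify $dW$ modulo the coadjoint $\mathfrak{gl}(S)^\vee$-directions, rather than requiring the full covector to vanish, and see that after this quotient the equation is exactly $\Im x\subset\ker\omega_p$ (plus, via genericity of $A$, the rank bound) and not something weaker. A clean way to organize this is to fix a splitting $\Hom(S,V)=\Hom(S,L_p)\oplus\Hom(S,V/L_p)$-type decomposition adapted to $\omega_p$, or simply to note that the orbit directions are $\{x\circ\xi : \xi\in\mathfrak{gl}(S)\}$ and that $dW$ annihilates these automatically by $\GL(S)$-invariance, so the critical-point condition is the vanishing of $dW$ on a complement — and a dimension count or direct pairing then yields the two stated conditions. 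I expect the $p$-direction to be entirely routine and the rank-bound extraction (needing Assumption~\ref{assm:generic}) plus the orbit-quotient argument in the $x$-direction to be where all the content lies.
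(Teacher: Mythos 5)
Your setup is fine as far as it goes: the $p$-equation ($A(\wedge^2 x)=0$, equivalently $\Im(x)$ isotropic for every $\twoform{q}$) is correct, and the ``stacky bookkeeping'' you worry about is a non-issue for exactly the reason you half-note at the end --- $W$ is $\GL(S)$-invariant, so $dW$ kills the orbit directions automatically and the critical locus on the stack is just the naive critical locus on the atlas, which is how the paper defines it. But your treatment of the $x$-direction is muddled and, more importantly, your extraction of the rank bound is wrong. For the $x$-direction: for fixed $p$ the function $W_p$ is a quadratic form on $\Hom(S,V)$, and the derivative of a quadratic form vanishes exactly on its kernel, which here is $\Hom(S,\ker\twoform{p})$; so the $x$-equation gives $\Im(x)\subset\ker\twoform{p}$ outright, with no need for the detour through $\mathfrak{gl}(S)^\vee$ (which, as you set it up, would only yield the much weaker condition $W_p(x)=0$) and no need to ``know $\rk(x)\le 1$ first'' --- your logical order is inverted relative to what actually works.

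The genuine gap is the rank bound. Your proposed argument --- that $\rk(x)=2$ gives $\wedge^2 x\neq 0$ and genericity of $A$ then forces $A(\wedge^2 x)\neq 0$ --- is false: $\ker A\subset\wedge^2 V$ has dimension $14$, and it contains plenty of decomposable bivectors; indeed $\P(\ker A)\cap\Gr(2,V)$ is precisely $Y_1$, a nonempty threefold, so there are many rank-$2$ maps $x$ satisfying the $p$-equation. What rules out rank $2$ is the \emph{combination} of both equations together with a transversality argument that your proposal never supplies. When $\rk(\twoform{p})=6$ the kernel is a line and there is nothing to do; when $\rk(\twoform{p})=4$ (i.e.\ $[p]\in Y_2$) one must show that every $2$-form $\eta$ on the $3$-dimensional kernel $\ker\twoform{p}$ arises as $\twoform{q}|_{\ker\twoform{p}}$ for some $q\in\Hom(V,\wedge^2 S)$. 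This follows from the description of the tangent space to the rank-$4$ locus in $\Hom(\wedge^2 V,\wedge^2 S)$ (a line $\omega+t\xi$ is tangent iff $\ker\omega$ is isotropic for $\xi$), which identifies the normal space with $\Hom(\wedge^2\ker\omega,\wedge^2 S)$, together with the transversality in Assumption~\ref{assm:generic}. Granting that, a $2$-dimensional $\Im(x)\subset\ker\twoform{p}$ cannot be isotropic for all $\twoform{q}$, contradicting the $p$-equation; this is the actual content of the proof and is entirely missing from your proposal.
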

\begin{proof}
In the $x$-directions $W$ is a quadratic form, so its derivatives vanish exactly along its kernel, which is $\Hom(S, \ker \twoform{p})$.  In the $p$-directions $W$ is linear, so its derivatives vanish exactly when $W(x,q) = 0$ for all $q \in \Hom(V,\wedge^2 S)$.  Thus $(x,p)$ is a critical point of $W$ if and only if $\Im(x)$ is contained in $\ker \twoform{p}$ and is isotropic for all $\twoform{q}$ as $q$ varies over $\Hom(V,\wedge^2 S)$.  Now we need only argue that these imply $\rk(x) \le 1$.  If $\rk(\twoform{p}) = 6$ then $\dim(\ker \twoform{p}) = 1$, so $\rk(x) \le 1$ already, but if $\rk(\twoform{p}) = 4$ we need a further argument.

Consider the locus of $\omega \in \Hom(\wedge^2 V, \wedge^2 S)$ for which $\omega$ has rank 4 as a 2-form on $V$.  By \cite[Ex.~20.5]{Harris}, a line $\omega + t \xi$ is tangent to this locus if and only if $\ker \omega$ is isotropic for $\xi$; that is, the tangent space to this locus is the kernel of the natural map
\[ 
\setlength \arraycolsep {2pt}
\begin{array}{ccc}
\Hom(\wedge^2 V, \wedge^2 S) & \to & \Hom(\wedge^2 \ker \omega, \wedge^2 S). \\
\xi & \mapsto & \xi|_{\ker \omega}
\end{array} \]
Thus the normal space to this locus in $\Hom(\wedge^2 V, \wedge^2 S)$ embeds into \opt{amsart}{\[\Hom(\wedge^2 \ker \omega, \wedge^2 S),\]}\opt{alggeom}{$\Hom(\wedge^2 \ker \omega, \wedge^2 S),$} and since both have dimension 3 they are isomorphic.

Now by Assumption \ref{assm:generic}, $A$ gives an embedding $\Hom(V, \wedge^2 S) \into \Hom(\wedge^2 V, \wedge^2 S)$ which is transverse to the rank-4 locus, so the normal space to the cone on $Y_2$ under this embedding is identified with $\Hom(\wedge^2 \ker \twoform{p}, \wedge^2 S)$ in the same way.  In particular, for every 2-form $\eta$ on $\ker \twoform{p}$ there is a $q \in \Hom(V,\wedge^2 S)$ such that $\twoform{q}|_{\ker \twoform{p}} = \eta$.  Now if $\Im(x) \subset \ker \twoform{p}$ were 2-dimensional there would be an $\eta$ for which it was not isotropic, hence a $q$ such that $\Im(x)$ was not isotropic for $\twoform{q}$, so $(x,p)$ would not be a critical point of $W$.  Thus if $(x,p)$ is a critical point of $W$ then $\rk(x) \le 1$ as claimed.
\end{proof}

We now focus on the part of the critical locus that lies over the Pfaffian Calabi--Yau $Y_2$. Let 
$$K \subset \cO_{Y_2} \otimes V$$
 be the rank-3 bundle whose fibre over $[p]\in Y_2$ is 
$$K_p := \ker \twoform{p} \subset V.$$
In the proof of the previous proposition we saw that $dW$ induces an isomorphism
\begin{equation} \label{eqn.transversederivativesofW}
dW \colon \cN_{Y_2/\P^6} \to \Hom(\wedge^2 K, \wedge^2 S)
\end{equation}
of vector bundles over $Y_2$. Notice that although $S$ is not really a vector bundle on $\P^6$ (it's a vector bundle on the stack $\mathcal{P}$), its determinant $\wedge^2 S$ really is a legitimate  line bundle on $\P^6$ -- in fact it's $\cO_{\P^6}(1)$.

We abuse notation slightly and let
$$\Hom(S,K) \;\subset\; \Hom(S,V)\times \left(\Hom(V, \wedge^2 S) - \{0\}\right)$$
denote the subvariety 
$$\Hom(S,K) = \setconds{(x,p) \in X_2}{[p]\in Y_2, \; x\in \Hom(S, K_p)}.$$
This is a vector bundle over the punctured affine cone over $Y_2$, and \opt{amsart}{\[\quotstack{\Hom(S, K)}{\GL(S)}\]}\opt{alggeom}{$\quotstack{\Hom(S, K)}{\GL(S)}$} is a substack of $X_2$ whose fibre over a point $[p] \in Y_2$ is
\[ \quotstack{\Hom(S,K_p)}{\SL(S)} \subset \quotstack{\Hom(S, V)}{\SL(S)} = \stackFibre. \]
Alternatively, we may view $\quotstack{\Hom(S, K)}{\GL(S)}$  as a vector bundle over the stack $\mathcal{P}$.

\begin{lem} \label{lem.coarsemodulispaces}
The underlying scheme of $\quotstack{\Hom(S, K)}{\GL(S)}$ is the total space of the vector bundle $\Hom(\wedge^2 S, \wedge^2 K)$ over $Y_2$. The underlying scheme of  $\Crit(W)|_{Y_2}$  is $Y_2$.
\end{lem}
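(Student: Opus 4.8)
The plan is to produce an explicit morphism from $\quotstack{\Hom(S,K)}{\GL(S)}$ to the asserted total space and to check that it realises the latter as the underlying scheme; the only real input is the First Fundamental Theorem for $\SL(S)=\SL_2$ together with the primeness of a $2\times 2$ determinantal ideal.

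First I would construct the morphism. The assignment $(x,p)\mapsto\wedge^2 x$, where $\wedge^2 x$ is read as an element of $\Hom(\wedge^2 S,\wedge^2 K_p)$, is $\GL(S)$-equivariant --- the $\GL(S)$-action on the source $\wedge^2 S$ precisely absorbs the equivariance of $x\mapsto\wedge^2 x$ --- and so descends to a morphism
\[ \Theta\colon \quotstack{\Hom(S,K)}{\GL(S)} \To \Tot\big(\Hom(\wedge^2 S,\wedge^2 K)\to Y_2\big) \]
over $Y_2$. Since the underlying scheme of a quotient by a reductive group may be computed Zariski-locally on the base, and since $X_2\to\P^6$ and $\delta\colon\mathcal{P}\to\P^6$ are Zariski-locally trivial with fibres $\stackFibre=\quotstack{\Hom(S,V)}{\SL(S)}$ and $B\SL(S)$, it suffices to fix $[p]\in Y_2$ and show that $x\mapsto\wedge^2 x$ induces an isomorphism of affine schemes $\Hom(S,K_p)/\!\!/\SL(S)\xrightarrow{\,\sim\,}\Hom(\wedge^2 S,\wedge^2 K_p)$; the vector-bundle structure over the base, together with the fact that passing to the underlying scheme simply forgets the $B\SL(S)$ in the fibres, then gives the global statement. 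For the fibrewise isomorphism, a choice of basis of the $3$-dimensional space $K_p=\ker\twoform{p}$ identifies $\Hom(S,K_p)$ with three copies of the two-dimensional representation of $\SL_2$ and makes the three components of $\wedge^2 x$ into the three $2\times 2$ minors, i.e.\ the bracket functions. By the First Fundamental Theorem these brackets generate the ring of $\SL_2$-invariants, and since there are only three of them there is no Pl\"ucker relation; hence that ring is the polynomial ring on the brackets, which is exactly $\Hom(\wedge^2 S,\wedge^2 K_p)$. This proves the first assertion.

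For the second assertion, recall from Proposition~\ref{prop.Pfcritical} that over $Y_2$ the critical locus sits inside $\quotstack{\Hom(S,K)}{\GL(S)}$ and is cut out fibrewise by $\rk(x)\le 1$. As $\rk(x)\le 1$ if and only if $\wedge^2 x=0$, this is $\Theta^{-1}$ of the zero section; but rather than manipulate $\Theta$ directly I would again compute the underlying scheme fibrewise. Over $[p]\in Y_2$ the rank-$\le 1$ locus $\{x\in\Hom(S,K_p):\rk x\le 1\}$ has coordinate ring $\C[x_{ij}]/I_2$, where $I_2$ is the ideal generated by the $2\times 2$ minors --- that is, by the bracket functions, which are precisely the generators of the invariant ring. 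Since $I_2$ is prime (hence radical) and $\SL_2$ is reductive, the invariants of the quotient ring are the polynomial ring on the brackets modulo the ideal those brackets generate there, namely $\C$. Globalizing as in the first part, the underlying scheme of $\Crit(W)|_{Y_2}$ is $Y_2$.

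The step I expect to cause the most trouble is the globalization bookkeeping in the first part: cleanly isolating the determinant character of $\GL(S)$ --- which is ``switched on'' precisely because $p\ne 0$, and whose quotient produces the base $Y_2$ --- so that only $\SL(S)$-invariant theory remains, and verifying that the resulting identification with $\Tot\Hom(\wedge^2 S,\wedge^2 K)$ is canonical enough (independent of the basis of $K_p$ and of the chosen local trivializations) to glue over $Y_2$. The invariant-theoretic computations themselves are classical.
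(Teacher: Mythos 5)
Your proposal is correct and follows essentially the same route as the paper: reduce to the fibre over a fixed $[p]\in Y_2$, identify the affine quotient $\Hom(S,K_p)/\!\!/\SL(S)$ with $\Hom(\wedge^2 S,\wedge^2 K_p)$ via $\wedge^2 x$ using classical $\SL_2$ invariant theory, and observe that the rank $\le 1$ locus maps to the zero section. The only (immaterial) difference is that the paper cites Kraft--Procesi for a closed embedding and concludes by a dimension count, whereas you invoke the First Fundamental Theorem directly and note there are no relations among the three brackets; your globalization via the canonical map $(x,p)\mapsto\wedge^2 x$ is exactly what the paper leaves implicit, and it glues with no further work since the map is basis-independent.
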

The `underlying scheme' of a stack is the universal scheme that receives a map from that stack; for a quotient stack this is simply the scheme-theoretic quotient.

\begin{proof}
As we just mentioned, the fibre of $\quotstack{\Hom(S, K)}{\GL(S)}$ over a point $[p]\in Y_2$ is the stack $\quotstackbig{\Hom(S,K_p)}{\SL(S)}$.  The scheme underlying this is the scheme-theoretic quotient
$$\Hom(S, K_p)\, /\, \SL(S) = \operatorname{Spec} \left( \cO_{\Hom(S,K_p)}\right)^{\SL(S)}. $$
By \cite[\S 8.4]{KraftProcesi} we have a closed embedding
$$ \Hom(S, K_p)\, /\, \SL(S) \into \Hom(\wedge^2 S, \wedge^2 K_p)$$
which is an isomorphism since both spaces have dimension~3. The first statement of the lemma follows immediately.

By Proposition \ref{prop.Pfcritical}, $\Crit(W)|_{Y_2}$ is the substack of $\quotstack{\Hom(S, K)}{\GL(S)}$ where $x$ has rank $1$. The image of this substack in the underlying scheme is the zero section.
\end{proof}

One can argue similarly that the underlying scheme of the whole of $\Crit(W)$ is $\P^6$, but we shall not use this fact.

\subsection{Point-like objects from maximally isotropic subspaces}
\label{sect:maxisocalculation}

We now show that maximally isotropic subspaces give point-like objects, as we outlined in \S \ref{sect:heuristicargument}.  Recall that, roughly, we want to find a family $L_p$ of maximally isotropic subspaces for the family of (rank 4) 2-forms $\omega_p$ over $Y_2$. Then we're going to look at the corresponding maximally isotropic subspaces $M_p=\Hom(S,L_p)$ for the associated family of quadratic forms. We don't know that we have such a family $L$ globally on $Y_2$, but we can find one Zariski locally (see Remark \ref{rem.localisotropicbundle} below).

 Let's begin by stating this local data precisely. Suppose we have an affine open set $\opensetinP \subset \P^6$ such that over the corresponding open set $\opensetinY := Y_2 \cap \opensetinP$ in $Y_2$ we can find a bundle
$$L \subset  \cO_{\opensetinY} \otimes V$$
of maximally isotropic subspaces for the family of 2-forms $\twoform{p}$.  As we did for $\Hom(S,K)$ above, let us use the notation
$$M := \Hom(S,L)\;\subset\;\Hom(S,V)\times \left(\Hom(V, \wedge^2 S) - \{0\}\right)$$
to denote the subvariety
$$M := \Hom(S,L) = \setconds{(x,p)}{[p]\in \opensetinY, \; x\in \Hom(S, L_p)}.$$
Notice that $M$ is preserved by both $\GL(S)$ and $\C^*_R$. 

The variety $M$ is a vector bundle over the punctured affine cone on $Y_2$, and its fibres are maximally isotropic subspaces $M_p$ for the family of quadratic forms $W_p$. The stack $\quotstack{M}{\GL(S)}$ is a vector bundle over the stack $\mathcal{P}|_{Y_2}$, which is like $Y_2$ but with $\SL_2$ isotropy groups at each point.

Since $M$ lies in the zero locus of $W$, the skyscraper sheaf $\cO_M$ is a curved dg-sheaf on the LG B-model $(X_2|_{\opensetinP}, W)$. For each point $[p]\in Y_2$, it restricts to give a curved dg-sheaf $\cO_{M_p}$  on the fibre $(\cF, W_p)$. As discussed in \S \ref{sect:heuristicargument}, we claim that these objects are `point-like' in the sense that they behave like the skyscraper sheaves $\cO_{[p]}\in \D(Y_2)$. This follows immediately from a slightly stronger claim, which is that the skyscraper sheaf $\cO_M$ along the whole family behaves like the structure sheaf on $\cO_{Y_2}$. This claim is our next proposition.

Recall that $\pi\colon X_2 \to \P^6$ is the projection sending $(x,p)$ to $[p]\in \P^6$. In our local situation, it is a map $\pi\colon X_2|_U \to U$.

\begin{prop}\label{prop:maxisocalculation} Suppose we have $U$, $Y'$, $L$ and $M$ as above.  Then the natural map
\[\cO_{\opensetinY} \To \pi_* \RDerived\hom_{X_2|_U}(\cO_\maxiso, \cO_\maxiso) \]
is a quasi-isomorphism.
\end{prop}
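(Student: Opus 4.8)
\emph{Proof proposal.}  First I would observe that the assertion is local on $\opensetinP$ (everything in sight is $\cO_{\opensetinP}$-linear and commutes with restriction to opens), so it suffices to treat a small $\opensetinP$; in particular I may assume that $L$ is the restriction of a subbundle $\tilde L\subset\cO_{\opensetinP}\otimes V$, that $\cN_{\opensetinY/\opensetinP}$ is trivial, and that $\opensetinY$ is cut out in $\opensetinP$ by a regular section $g$ of a rank-$3$ bundle $F$ with $F|_{\opensetinY}\cong\cN_{\opensetinY/\opensetinP}$.  Inside $X_2|_{\opensetinP}$ the subvariety $\maxiso$ is then the zero locus of the $\GL(S)$-equivariant section
\[
\sigma=(\bar x,\ \pi^*g)\in\Gamma(\mathcal B),\qquad \mathcal B:=\Hom(S,V/\tilde L)\oplus\pi^*F,
\]
with $\bar x$ the tautological section of $\Hom(S,V/\tilde L)$; since $\mathcal B$ has rank $4+3=7$ and $\maxiso$ has codimension $7$ in $X_2|_{\opensetinP}$, this section is regular.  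Hence $\cO_\maxiso$ carries a $\C^*_R$-equivariant Koszul resolution with underlying bundle $\wedge^\bullet\mathcal B^\vee$, and since $W$ vanishes on $\maxiso$ we may write $W=\langle\sigma,\tau\rangle$ for some $\GL(S)$-equivariant cosection $\tau$ of $\mathcal B^\vee$.  By Lemma~\ref{lem:resolvingintowindows} (concretely, the differential $\iota_\sigma+\tau\wedge{-}$ already squares to $W$) the object $\cO_\maxiso$ is represented in $\D(X_2|_{\opensetinP},W)$ by a matrix factorization $E$ with underlying bundle $\wedge^\bullet\mathcal B^\vee$.

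Next I would compute $\RDerived\hom_{X_2|_{\opensetinP}}(\cO_\maxiso,\cO_\maxiso)=\hom_{X_2|_{\opensetinP}}(E,\cO_\maxiso)$, cf.\ \S\ref{rem:basicproperties}.  Because $d_{\cO_\maxiso}=0$ and $\sigma|_\maxiso=0$, the induced differential on $\wedge^\bullet\mathcal B|_\maxiso$ retains only the contraction with $\tau|_\maxiso$ (the $\iota_\sigma$-part becomes wedging with $\sigma|_\maxiso=0$), so
\[
\RDerived\hom_{X_2|_{\opensetinP}}(\cO_\maxiso,\cO_\maxiso)\ \simeq\ \bigl(\wedge^\bullet\mathcal B|_\maxiso,\ \iota_{\tau|_\maxiso}\bigr).
\]
The plan is to choose $\tau$ split along $\mathcal B=\Hom(S,V/\tilde L)\oplus\pi^*F$.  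Over $[p]\in\opensetinY$ the radical $K_p=\ker\omega_p$ lies in the isotropic space $L_p$, so $\omega_p$ induces a perfect pairing $L_p/K_p\times V/L_p\to\wedge^2 S$; rewriting $W_p(x)=\omega_p(x(s_1),x(s_2))$ through this pairing displays $W_p$ as $\langle\bar x,\tau_{\mathrm{fib}}\rangle$ with $\tau_{\mathrm{fib}}$ linear in $x$ and $\tau_{\mathrm{fib}}|_{\maxiso_p}$ vanishing precisely on $\Hom(S,K_p)$.  Taking $\tau_{\mathrm{fib}}$ to be any equivariant extension of this over $\opensetinP$, the difference $W-\langle\bar x,\tau_{\mathrm{fib}}\rangle$ vanishes on $\{[p]\in\opensetinY\}=Z(\pi^*g)$, hence equals $\langle\pi^*g,\tau_{\mathrm{base}}\rangle$ for a cosection $\tau_{\mathrm{base}}$ of $\pi^*F^\vee$.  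With this split the complex above becomes the tensor product over $\cO_\maxiso$ of the Koszul complexes of $\tau_{\mathrm{fib}}|_\maxiso$ and of $\tau_{\mathrm{base}}|_\maxiso$; the first is a resolution of $\cO_{\Hom(S,K)}$, since $Z(\tau_{\mathrm{fib}}|_\maxiso)=\Hom(S,K)$ has codimension $4$ in the smooth variety $\maxiso$, matching the rank of $\Hom(S,V/\tilde L)$.  So the whole complex is quasi-isomorphic to the Koszul complex of $\tau_{\mathrm{base}}$ restricted to $\Hom(S,K)$.

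The main obstacle is now visible: $\tau_{\mathrm{base}}$ is a section of the \emph{rank-$3$} bundle $\pi^*F$, but $Z(\tau_{\mathrm{base}}|_{\Hom(S,K)})=\Crit(W)\cap\maxiso$ (the rank-$\le1$ locus, by Proposition~\ref{prop.Pfcritical}) has codimension only $2$, so this Koszul complex is \emph{not} a resolution -- the excess reflecting the determinantal, non-complete-intersection nature of the rank-$\le1$ locus, whose ideal is generated by the three $2\times2$ minors of $x$.  This is exactly where ``the transverse derivatives of $W$ cancel the degenerate directions of $W_p$'' has to be made rigorous, and I expect it to be the crux.  The plan is to apply $\pi_*$, equivalently to take $\GL(S)$-invariants, \emph{before} passing to cohomology.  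The $\GL(S)$-invariant pushforward of $\cO_{\Hom(S,K)}$ is, by Lemma~\ref{lem.coarsemodulispaces} (Kraft--Procesi), the structure sheaf of the total space of $\Hom(\wedge^2 S,\wedge^2 K)$ over $\opensetinY$, with fibre coordinate the $\SL(S)$-invariant $\wedge^2 x$; on this quotient the three minor relations collapse, the three minors being the components of the single vector $\wedge^2 x$.  Combining this with the isomorphism $dW\colon\cN_{\opensetinY/\P^6}\isoto\Hom(\wedge^2 K,\wedge^2 S)$ of \eqref{eqn.transversederivativesofW} -- so that $\Hom(\wedge^2 S,\wedge^2 K)\cong\cN^\vee_{\opensetinY/\P^6}$ -- one checks that the invariant part of $\tau_{\mathrm{base}}$ is the tautological section of $\cN^\vee_{\opensetinY/\P^6}$ on $\Tot(\cN^\vee_{\opensetinY/\P^6})$, whose zero locus is exactly the reduced zero section $\opensetinY$ (its preimage upstairs is the rank-$\le1$ locus, and a point where $\wedge^2 x\ne0$ lies on a rank-$2$ matrix, which is not critical).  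Being a regular section of a rank-$3$ bundle on a $6$-dimensional smooth variety, it then has a Koszul resolution of $\cO_{\opensetinY}$.  Verifying this identification -- essentially, that $\tau_{\mathrm{base}}$ descends to the tautological section and is transverse, which should follow from $dW$ being an isomorphism -- is the delicate point.

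Finally, each term of the Koszul model is a vector bundle, and the morphism $\quotstack{\maxiso}{\GL(S)}\to\mathcal P|_{\opensetinY}\xrightarrow{\delta}\opensetinY$ factors as a vector bundle followed by the $B\SL(S)$-bundle $\delta$; thus $\pi_*$ -- the composite of the exact invariants functor with an affine pushforward -- has no higher derived functors on these terms and may be computed termwise.  Hence $R\pi_*\RDerived\hom_{X_2|_{\opensetinP}}(\cO_\maxiso,\cO_\maxiso)$ is the Koszul resolution of $\cO_{\opensetinY}$, concentrated in cohomological degree $0$; the natural map from $\cO_{\opensetinY}$ is the unit of the endomorphism algebra, which corresponds to $1\in H^0$ under these identifications, and so is a quasi-isomorphism.  (One could instead phrase this computation through the adjunction $(\zeta_*,\zeta^!)$ for $\zeta\colon\{W=0\}\into X_2|_{\opensetinP}$, analyzing $\zeta^!\zeta_*\cO_\maxiso$ directly, but the explicit Koszul model seems the most transparent route; cf.\ Remark~\ref{rem:embeddings}.)
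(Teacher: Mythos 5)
Your proposal is correct and takes essentially the same route as the paper's proof: the paper identifies $\RDerived\hom(\cO_\maxiso,\cO_\maxiso)$ with the Koszul-type complex $(\wedge^\bullet\cN_{\maxiso/X_2},dW)$, splits the conormal sequence into fibrewise and base parts (your $\tau_{\mathrm{fib}}$ and $\tau_{\mathrm{base}}$), uses transversality of the fibrewise derivatives to reduce to a Koszul complex on $\Hom(S,K)$, and then handles the non-transverse base part exactly as you flag at your ``delicate point'' --- by pushing to the underlying scheme $\Tot\bigl(\Hom(\wedge^2 S,\wedge^2 K)\bigr)$ via Lemma~\ref{lem.coarsemodulispaces} and identifying the descended section with the tautological section coming from the transpose of \eqref{eqn.transversederivativesofW}, which is transverse because \eqref{eqn.transversederivativesofW} is an isomorphism. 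The only differences are presentational: you build the Koszul matrix factorization and the cosection $\tau$ explicitly (and introduce the auxiliary bundle $F$), where the paper cites \cite[\S A.4]{ASS} and splits the conormal sequence using affineness.
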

\begin{proof}
The natural map in question is the composition of the natural map $\cO_{\opensetinY}\to \pi_*\cO_M$ with the pushdown of the identity for $\cO_M$.  This is necessarily non-zero everywhere, so it's enough to prove that $\pi_* \RDerived\hom_{X_2|_U}(\cO_\maxiso, \cO_\maxiso)$ is quasi-isomorphic to $\cO_{\opensetinY}$.

The curved dg-sheaf $\cO_M$ is the skyscraper sheaf along a smooth subvariety lying in the zero locus of $W$.  For a curved dg-sheaf of this form, it's easy to show that\footnote{We neglect some shifts in R-charge which will be irrelevant.}
\begin{equation} \label{eq:perturbed_normal_polyvector}
\RDerived\hom_{X_2|_U}(\cO_\maxiso, \cO_\maxiso) \cong (\wedge^\bullet \cN_{\maxiso /X_2}, \, dW);
\end{equation}
see for example \cite[\S A.4]{ASS}.  So we take the sheaf of normal polyvector fields (which would be the correct answer if $W$ were zero) and perturb it by contracting with the section
$$dW \colon \cO_\maxiso \to \cN^\vee_{\maxiso / X_2}, $$
which is well-defined since $W$ vanishes along $\maxiso$.  This is not a transverse section, in the sense that its intersection with the zero section is not transverse. However we will split it into two pieces, one of which is transverse and the other of which we analyzed earlier.

Since $\maxiso$ is a vector bundle over (the punctured affine cone over) $\opensetinY$, we have a short exact sequence
\begin{equation} \label{eqn.conormal_seq}
0 \to \pi^* \cN^\vee_{\opensetinY / \opensetinP} \to \cN^\vee_{\maxiso / X_2|_\opensetinP} \to \Hom(S, V/L)^\vee \to 0.
\end{equation}
Since the open set $\opensetinP$ is affine, the total space of $\maxiso$ is also affine, so the sequence \eqref{eqn.conormal_seq} splits:
\[ \cN^\vee_{\maxiso / X_2} \cong \pi^* \cN^\vee_{\opensetinY / U} \oplus \Hom(S, V/L)^\vee. \]
Write $dW = (dW)_1 \oplus (dW)_2$ with respect to this splitting.  Then the right-hand side of \eqref{eq:perturbed_normal_polyvector} is a tensor product of the Koszul complexes associated to $(dW)_1$ and $(dW)_2$.

The section $(dW)_2$ consists of the fibre-wise derivatives of the family of quadratic forms $W_p$, so as in the proof of Proposition~\ref{prop.Pfcritical} it vanishes exactly along the kernel $\Hom(S, K)$ of the family of quadratic forms.  Since $\dim \Hom(S,K) = \dim \Hom(S,L) - \dim \Hom(S,V/L)^\vee$ we see that $(dW)_2$ is transverse, so the associated Koszul complex is exact, and we may replace it with $\cO_{\Hom(S, K)}$.

Thus $\RDerived\hom_{X_2|_U}(\cO_\maxiso, \cO_\maxiso)$ is quasi-isomorphic to the Koszul complex of the section
\[ (dW)_1 \colon \cO_{\Hom(S,K)} \to \pi^* \cN^\vee_{\opensetinY / U} \]
on the total space of $\Hom(S,K)$ over $\opensetinY$.  This section is not transverse, but what we actually care about is $\pi_*\RDerived\hom_{X_2|_U}(\cO_\maxiso,  \cO_\maxiso) $, which we can compute by first pushing down to the underlying scheme of $\quotstack{\Hom(S, K)}{\GL(S)}$. By Lemma~\ref{lem.coarsemodulispaces} this is the total space of the vector bundle $\Hom(\wedge^2 S, \wedge^2 K)$ over $\opensetinY\subset Y_2$, and now the section $(dW)_1$ is essentially the transpose of \eqref{eqn.transversederivativesofW}.  By this we mean: the transpose of $\eqref{eqn.transversederivativesofW}$ is a map of vector bundles $\Hom(\wedge^2 S, \wedge^2 K) \to \cN_{Y_2/\P^6}^\vee$, and the corresponding section of the pullback of $\cN_{Y_2/\P^6}^\vee$ to the total space of $\Hom(\wedge^2 S, \wedge^2 K)|_{Y'}$ is $(dW)_1$.  Since $\eqref{eqn.transversederivativesofW}$ is an isomorphism, this is a transverse section of the pullback of $\cN_{Y'/\P^6}^\vee$ that vanishes along the zero section of $\Hom(\wedge^2 S, \wedge^2 K)|_{Y'}$, so its Koszul complex is quasi-isomorphic to the structure sheaf of the zero section, and we conclude that
\[ \pi_*\RDerived\hom_{X_2|_U}(\cO_\maxiso,  \cO_\maxiso) \cong \cO_{\opensetinY}. \qedhere \]
\end{proof}

\begin{rem}\label{rem.localisotropicbundle} A bundle $L \subset \cO_{Y_2} \otimes V$ of maximally isotropic subspaces for the 2-forms $\twoform{p}$ may be constructed Zariski-locally on $Y_2$ as follows. Fix a point $[x] \in Y_1$; this determines a 2-dimensional subspace $\Im(x) \subset V$ which is isotropic for all $\twoform{p}$.  Then over the Zariski open set where $K_p \cap \Im(x) = 0$ we can take $L_p = K_p + \Im(x) \subset V$.  The complement of this open set, i.e.\ the locus where $K_p \cap \Im(x) \ne 0$, is a curve in $Y_2$.  We remark that this correspondence between points in $Y_1$ and curves in $Y_2$ is the essential ingredient of \cite{BorCal}.
\end{rem}

We do not know how to find such a bundle $L$ over the whole of $Y_2$, however, and indeed we suspect that no such global bundle exists. Consequently, we cannot immediately use the construction of Proposition~\ref{prop:maxisocalculation} to give a global generating object.  Fortunately we know another equivalent construction, one which does work globally, as we explain in the next section.

\subsection{Another construction of point-like objects}\label{sect:anotherconstruction}

Instead of using a maximally isotropic subbundle, we will use the following subspace:

\begin{defn}\label{defn:Gamma}
Let $\Gamma \subset X_2$ be the closed substack consisting of points $(x,p)$ where $[p]\in Y_2$, and the map
$$\bar x\colon S \to V/K_p $$
has rank at most 1.
\end{defn}
$\Gamma$ is a flat family of stacks over $Y_2$, as its fibres are all isomorphic. Observe that $W$ vanishes along $\Gamma$, and that $\Gamma$ is a cone in each fibre of $\pi$, hence is $\C^*_R$-invariant; therefore $\cO_\Gamma$ is a curved dg-sheaf on $X_2$, restricting on each fibre to give a curved dg-sheaf $\cO_{\Gamma_p}$ on $\stackFibre$. 

As we shall show momentarily, the object $\cO_{\Gamma_p}$ is (approximately) equivalent to $\cO_{M_p}$, where $M_p$ is a maximally isotropic subspace of $\stackFibre$ as in the previous section. The proof is a little involved, but let us first remark why the result is not so surprising.

The quadratic form $W_p$ on $\Hom(S,V)$ descends to a non-degenerate one $W_p'$ on $\Hom(S,V/K_p)$, so we have a pullback functor
$$\D(\Hom(S, V/K_p), W_p') \to \D(\Hom(S,V), W_p). $$
By definition, $\Gamma_p$ is the preimage of the locus of rank-1 matrices in $\Hom(S,V/K_p)$, and $M_p$ the preimage of the maximally isotropic subspace
$$ \Hom(S, L_p/K_p) \subset \Hom(S, V/K_p), $$
where $L_p/K_p$ is a Lagrangian in $V/K_p$. Consequently, both $\cO_{\Gamma_p}$ and $\cO_{M_p}$ are pullbacks of objects in $\D(\Hom(S, V/K_p), W_p')$.\footnote{We assume for this rough argument that both of these curved dg-sheaves are equivalent to matrix factorizations.}  But $W_p'$ is non-degenerate, so by Kn\"orrer periodicity this category is equivalent to the derived category of a point. It is hardly surprising, then, that two natural objects in this category turn out to be isomorphic.

\begin{prop}\label{prop.rank1vsmaximallyisotropic} Fix $[p]\in Y_2$.  Let $L_p\subset V$ be a maximally isotropic subspace for the 2-form $\twoform{p} = p \circ A$.  Then the curved dg-sheaf $\cO_{\Gamma_p}$  is homotopy-equivalent to the curved dg-sheaf
$$\cO_{\maxiso_p}\otimes \det S \otimes \det(L_p/K_p)^{-1}[-1]$$
in $\QCoh_{dg}(\stackFibre, W_p)$.
\end{prop}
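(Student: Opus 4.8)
The strategy is to peel off the degenerate directions of $W_p$, reduce to ordinary Kn\"orrer periodicity, and then identify the two objects by a single $\RHom$ computation.

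First I would replace $V$ by $\bar V := V/K_p$, a $4$-dimensional symplectic space with the form $\bar\omega_p$ induced from $\twoform p$, and write $\bar L := L_p/K_p$ for the induced Lagrangian. The quotient $q\colon \Hom(S,V)\to\Hom(S,\bar V)$ is an $\SL(S)$- and $\C^*_R$-equivariant trivial vector bundle, and one checks directly that $W_p = q^*W'_p$ with $W'_p(\bar x) = \bar\omega_p(\bar x s_1,\bar x s_2)$ nondegenerate, while $\maxiso_p = q^{-1}(\bar M)$ for $\bar M := \Hom(S,\bar L)$ a maximally isotropic subspace of $(\Hom(S,\bar V),W'_p)$, and $\Gamma_p = q^{-1}(R)$ for $R\subset\Hom(S,\bar V)$ the locus of maps of rank $\le 1$. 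Hence $\cO_{\maxiso_p} = q^*\cO_{\bar M}$ and $\cO_{\Gamma_p} = q^*\cO_R$, and since $q$ is flat, $q^*$ is an exact, monoidal, shift-preserving dg-functor between categories of curved dg-sheaves; so it suffices to exhibit a homotopy equivalence
\[ \cO_R \;\simeq\; \cO_{\bar M}\otimes\det S\otimes\det(\bar L)^{-1}[-1] \]
in $\QCohdg\big([\Hom(S,\bar V)/\SL(S)],W'_p\big)$. Both sheaves do lie in this category: $W'_p$ lies in the ideals of the reduced varieties $\bar M$ and $R$, so perturbing the $\SL(S)$-equivariant Koszul resolution of $\cO_{\bar M}$, respectively a finite $\SL(S)$-equivariant (e.g.\ Eagon--Northcott) resolution of the determinantal variety $R$, as in Lemma~\ref{lem:resolvingintowindows}, replaces each of them by a matrix factorization.

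Next, since $W'_p$ is nondegenerate and $\bar M$ is an $\SL(S)$-invariant maximally isotropic subbundle of the $\SL(S)$-representation $\Hom(S,\bar V)$, the family version of Kn\"orrer periodicity from \S\ref{sect:familiesfirstversion} --- applied over the base $B\SL(S)$, equivalently the $\GL(S)$-equivariant statement once the R-charge torus is recognised as the centre of $\GL(S)$; cf.\ \cite{Preygel} --- gives an equivalence $\Phi = \iota_*\pi^*\colon \D(B\SL(S)) \xrightarrow{\ \sim\ } \D([\Hom(S,\bar V)/\SL(S)],W'_p)$ with $\Phi(F) = \cO_{\bar M}\otimes F$, so $\Phi(\cO)=\cO_{\bar M}$. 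Because $\Phi$ is an equivalence and $\operatorname{Rep}\SL(S)$ is semisimple, $\cO_R \cong \cO_{\bar M}\otimes E$, where $E := \Phi^{-1}(\cO_R)$ is the complex of $\SL(S)$-representations computed by applying $\RDerived\Gamma_{\Hom(S,\bar V)}\hom(-,\cO_R)$ to a matrix-factorization model of $\cO_{\bar M}$ (see \S\ref{rem:basicproperties}(\ref{itm:him})). So the Proposition reduces to the single identification
\[ E \;\simeq\; \det S\otimes\det(\bar L)^{-1}[-1], \]
whose right-hand side is one-dimensional, $\SL(S)$-trivial, and concentrated in a single homological degree; this in particular subsumes the plain statement that $\RHom(\cO_{\bar M},\cO_R)\cong\C$ in one degree.

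To compute $E$, I would model $\cO_{\bar M}$ by the perturbed Koszul matrix factorization with underlying $\SL(S)$-equivariant bundle $\wedge^\bullet(S\otimes\bar L)$ --- using $\bar V/\bar L\cong\bar L^\vee$ via $\bar\omega_p$ to identify the conormal $\cN^\vee_{\bar M/\Hom(S,\bar V)}$ --- with differential the Koszul contraction plus the perturbation by the transverse-to-$\bar M$ component of $dW'_p$. Then $E$ is the complex $\wedge^\bullet(S^\vee\otimes\bar L^\vee)\otimes\C[R]$ with the induced differential, where $\C[R] = \bigoplus_{d\ge 0}\Sym^d S\otimes\Sym^d\bar V^\vee$ is the classical $\SL(S)$-equivariant coordinate ring of the rank-$\le 1$ locus. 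Decomposing $\wedge^\bullet(S^\vee\otimes\bar L^\vee)$ by the Cauchy formula (Schur functors of the two $2$-dimensional spaces $S^\vee$ and $\bar L^\vee$) and pairing the $S^\vee$-Schur summands against $\Sym^d S$ inside $\C[R]$, the $\SL(S)$-isotypic-trivial part of $E$ is a finite complex supported in homological degrees $0$ through $4$ with Euler characteristic $-1$; a direct check of the Koszul-plus-perturbation differential on it shows the cohomology is one-dimensional, concentrated in the single (odd) degree, with multiplicity space $\det S\otimes\det(\bar L)^{-1}$ traceable to the top exterior power $\wedge^4(S^\vee\otimes\bar L^\vee) = \det(S^\vee)^{2}\otimes\det(\bar L^\vee)^{2}$ and the R-charge weights.

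The main obstacle is precisely this last computation: it is the fibrewise incarnation of the ``the transverse directions exactly cancel the degenerate directions'' mechanism flagged in \S\ref{sect:heuristicargument}, so the collapse of the cohomology to a single line must be verified through the differential, not merely counted dimensionally. (One could instead try to shortcut it by first arguing $\cO_R$ is indecomposable in $\D([\Hom(S,\bar V)/\SL(S)],W'_p)$ --- using that $R$ is normal, so $\Gamma(\cO_R)_0 = \C$ --- to conclude $\cO_R\cong\cO_{\bar M}[n]$ for a single $n$, and then pin down $n$ and the twist by the Euler characteristic and a minimal-degree calculation; but controlling $\End^0$ in the matrix-factorization category has its own subtleties.) A secondary but genuine nuisance throughout is tracking the homological degree together with the $\C^*_R$-weight (equivalently the $\GL(S)$-central weight) of the surviving class and the fixed line $\det(L_p/K_p)$, so that one arrives at exactly $\det S\otimes\det(L_p/K_p)^{-1}[-1]$ and not some other shift or twist of $\cO_{\maxiso_p}$.
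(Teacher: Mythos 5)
Your opening reduction is fine (and is in fact the heuristic the paper itself records just before the proposition): $\Gamma_p$, $\maxiso_p$ and $W_p$ are all pulled back along the flat, $\SL(S)\times\C^*_R$-equivariant quotient map $\Hom(S,V)\to\Hom(S,\bar V)$ with $\bar V=V/K_p$, so it suffices to prove the corresponding equivalence downstairs. The first genuine gap is the next step. You assert an equivalence $\Phi\colon\D(B\SL(S))\to\D\big(\quotstackinline{\Hom(S,\bar V)}{\SL(S)},W_p'\big)$ by ``applying the family Kn\"orrer periodicity of \S\ref{sect:familiesfirstversion} over the base $B\SL(S)$'', but the results quoted there (Theorem~\ref{thm.Knorrer} needs a smooth quasi-projective base, and the Preygel statement is likewise for schemes) do not cover a classifying stack of a positive-dimensional reductive group; essential surjectivity over an Artin stack is exactly the delicate point this paper's window machinery exists to handle, and the paper only ever uses this picture heuristically. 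Worse, there is a concrete reason to expect $\Phi$ is \emph{not} essentially surjective: the R-charge forced on the fibre is dilation with weight $1$ on $x$ (not weight $2$ on half the variables), and with that grading the matrix factorizations of a nondegenerate quadratic form on the $8$-dimensional space $\Hom(S,\bar V)$ are governed by the even Clifford algebra, whose centre is $\C\times\C$: there are two mutually orthogonal spinor-type blocks. Every $\SL(S)$-invariant Lagrangian is of the form $\Hom(S,\bar L)$, and any two of these meet in even dimension, so they all lie in a single family; hence the objects $\cO_{\bar M}\otimes\rho$ can generate at most one of the two blocks, while the other half-spinor module still defines an equivariant matrix factorization in the complementary block. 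So $\Phi$ is fully faithful but should not be an equivalence, and the definition $E:=\Phi^{-1}(\cO_R)$ is illegitimate until you prove that $\cO_R$ lies in the subcategory generated by $\cO_{\bar M}$ and its representation twists --- which is a substantial part of what the proposition asserts, not a formality.

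The second gap you acknowledge yourself: the identification $E\simeq\det S\otimes\det(\bar L)^{-1}[-1]$ is exactly the ``transverse directions cancel the degenerate directions'' mechanism, and your proposal leaves the verification of the collapse of the Hom complex to a single line as ``the main obstacle''; moreover you only discuss the $\SL(S)$-trivial isotypic piece, whereas pinning down $E$ also requires $\RHom(\cO_{\bar M}\otimes\rho,\cO_R)=0$ for every nontrivial irreducible $\rho$. The paper's proof is organized precisely to avoid both issues: it never invokes Kn\"orrer periodicity on the stacky fibre. Instead it places both $\maxiso_p$ and $\Gamma_p$ inside the complete intersection $\Sigma_p$ of the quadric $\{W_p=0\}$ with the determinantal quadric of $S\to V/L_p$, whose structure sheaf is contractible by \S\ref{rem:basicproperties}(\ref{itm:dsing}); it then resolves $\Sigma_p$ by a bundle $\tilde\Sigma_p$ over $\P S\times\P(V/L_p)$, on which $\tilde\maxiso_p$ and $\tilde\Gamma_p$ are divisors cut out by explicit line-bundle sections (this is where the twist $\det S\otimes\det(L_p/K_p)^{-1}$ and the R-charge shift appear), and compares the two ideal sheaves by pushing forward the Euler sequence $0\to\cO(-1,1)\to\cO(0,1)\otimes S\to\cO(1,1)\otimes\det S\to 0$, with the leftover terms again contractible because they are restrictions of perfect complexes to $\{W_p=0\}$. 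To salvage your route you would need an honest equivariant Kn\"orrer/Clifford statement for the stack with the dilation R-charge (or a direct proof that $\cO_R$ is orthogonal to the second spinor block), together with the full isotypic computation; as written, the proposal is a plausible strategy sketch rather than a proof.
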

The term $\det(L_p/K_p)^{-1}$ is a trivial line bundle on $\stackFibre$, but will be necessary later when we let $p$ vary. 

Recall from Section \ref{sect:catsofcurveddgsheaves} that $\QCohdg(\stackFibre, W)$ is the dg-category of curved dg-sheaves localized at `acyclic' objects, and that
$\Perf(X,W)$ is the full subcategory of objects that are equivalent to matrix factorizations. The curved dg-sheaf $\cO_{\maxiso_p}$ certainly lies in $\Perf(\stackFibre, W_p)$, so this proposition proves incidentally that $\cO_{\Gamma_p}\in \Perf(\stackFibre, W_p)$ too.
\begin{proof}
Consider the locus
\[ \Sigma_p := \setconds{x \in \Hom(S,V)}{W_p(x) = 0,\, \dim(L_p + \Im(x)) \le 6}. \]
It contains both $\maxiso_p$ and $\Gamma_p$.  It's an intersection of two quadrics in $\Hom(S,V)$: one cut out by $W_p$ and the other by the determinant of the $2\times2$ matrix 
$$S \xrightarrow{x} V \to V/L_p.$$
In fact it is a complete intersection: $W_p$ is a quadric of rank 10, hence is irreducible, and the second quadric has rank at most 4, hence is different from $W_p$, so their intersection is complete.

Thus $\cO_{\Sigma_p}$ is the restriction to $\set{W_p=0}$ of a perfect complex on $\stackFibre$, and hence it is a contractible curved dg-sheaf by \S \ref{rem:basicproperties}(\ref{itm:dsing}). So to prove the lemma it is enough to show the equivalence of the ideal sheaves
\begin{equation}\label{eq:equivofidealsheaves}I_{\Gamma_p/\Sigma_p}\cong I_{\maxiso_p/\Sigma_p} \otimes \det S \otimes \det(L_p/K_p)^{-1}[1]\end{equation}
as curved dg-sheaves on $(\stackFibre, W_p)$. 

We take the following ($\SL(S)\times\C^*_R$)-equivariant resolution of singularities of $\Sigma_p$:
\[ \tilde\Sigma_p := \setconds{(x,l,H) \in \Hom(S,V) \times \P S \times \Gr(6,V)}{(L_p + \Im x) \subset H,\,x(l) \subset H^\perp}, \]
with the evident projection map
$$\pr_1\colon \tilde\Sigma_p \to \Sigma_p.$$
Here the orthogonal $H^\perp$ is taken with respect to the pairing $\twoform{p}$, and since $L_p \subset H$ we have $H^\perp \subset L_p^\perp = L_p \subset H$.  To see that $\tilde\Sigma_p$ is smooth, observe that the projection
\[ \pr_{23}\colon \tilde\Sigma_p \to \P S \times \P(V/L_p) \cong \P^1 \times \P^1\]
defined by $\pr_{23}(x,l,H) = (l, H/L_p)$ is a vector bundle: the fibre is the 10-dimensional vector space
\[ \setconds{x \in \Hom(S,H)}{x(l) \subset H^\perp}. \]
To see that $\pr_1$ is a resolution of singularities, observe that if $x\in \Sigma_p$ is generic in the sense that $\Im(x)\not\subset L_p$, then the fibre $\pr_1^{-1}(x)$ is a single point: clearly $H$ is uniquely determined, but also $x^{-1}(H^\perp)$ must be 1-dimensional, and this determines $l$.

Now we analyze the non-generic fibres of $\pr_1$, over points $x$ where $\Im(x)\subset L_p$. There are three cases:
\begin{itemize}
\item $\dim(\Im(x) + K_p)=5$. Then $x$ has rank 2. We can choose $H$ freely, and then we must set $l = x^{-1}(H^\perp)$. Thus the fibre is $\P(V/L_p) \cong \P^1$. 

\item $\dim(\Im(x) + K_p)=4$.  The fibre has two irreducible components: either $H = (\Im(x) + K_p)^\perp$ and we can choose $l$ freely, or $l = x^{-1}(K_p)$ and we can choose $H$ freely. Thus the fibre is two copies of $\P^1$ meeting at a point: precisely, $\P S$ meeting $\P(V/L_p)$ at the point $(x, x^{-1}(K_p), (\Im(x) + K_p)^\perp)$.

\item $\dim(\Im(x) + K_p)=3$. We can choose both $l$ and $H$ freely, so the fibre is $\P S \times \P(V/L_p) \cong \P^1 \times \P^1$.
\end{itemize}
Consequently $\RDerived\pr_{1*} \cO_{\tilde\Sigma_p} = \cO_{\Sigma_p}$, i.e.\ $\Sigma_p$ has rational singularities.

Next we consider the preimage of $\maxiso_p$ in $\tilde\Sigma_p$:
$$\tilde \maxiso_p := \setconds{(x,l,H) \in \tilde\Sigma_p}{\Im(x) \subset L_p}.$$
This is the union of all the non-generic fibres.  The projection $\pr_{23}$ makes $\tilde \maxiso_p$ into a rank-9 vector bundle over $\P^1 \times \P^1$, so $\tilde \maxiso_p$ is smooth.  From the above analysis of the fibres we know that $\RDerived\pr_{1*} \cO_{\tilde \maxiso_p} = \cO_{\maxiso_p}$.  Also $\tilde \maxiso_p \subset \tilde\Sigma_p$ is a divisor, and it's the zero locus of the map
$$S/l = \det S\otimes l^{-1} \; \stackrel{x}{\longrightarrow}\; H/L_p $$
which is a section of the line bundle $\pr_{23}^* \cO(-1,-1) \otimes \det S^{-1}$. So we have an exact sequence
\[ 0 \to \pr_{23}^* \cO(1,1) \otimes \det S[-1] \to \cO_{\tilde\Sigma_p} \to \cO_{\tilde \maxiso_p} \to 0. \]
The R-charge shift occurs because the map $x$ has R-charge 1. Applying $\RDerived\pr_{1*}$ to the above exact sequence gives us
\[ \RDerived\pr_{1*} \pr_{23}^* \cO(1,1) \otimes \det S = I_{\maxiso_p/\Sigma_p}[1]. \]

The final variety we consider is the proper transform of $\Gamma_p$ in $\tilde\Sigma_p$:
\[ \tilde\Gamma_p := \set{ (x,l,H) \in \tilde\Sigma_p :\; x(l) \subset K_p }. \]
The projection $\pr_{23}$ makes $\tilde\Gamma_p$ into a rank-9 vector bundle over $\P^1 \times \P^1$, so it too is smooth, and a similar inspection of the fibres of $\pr_1$ yields $\RDerived \pr_{1*} \cO_{\tilde\Gamma_p} = \cO_{\Gamma_p}$. The subvariety $\tilde\Gamma_p \subset \tilde\Sigma_p$ is also a divisor.  It is the zero locus of the map
$$l  \; \stackrel{x}{\longrightarrow}\;( H^\perp/K_p)\cong (H/L_p)\otimes \det S^{-1}\otimes \det (L_p/K_p) $$
(to see the equality here note that the 2-form $\twoform{p}$ takes values in $\det S$, so $H^\perp$ is the kernel of the map $V \to H^* \otimes \det S$ induced by $\twoform{p}$, from which it follows that $\det(H^\perp) = \det H \otimes \det S^{-1}$).  Thus $\tilde\Gamma_p$ is cut out by a section of the line bundle
$$\pr_{23}^* \cO(1,-1) \otimes \det S^{-1} \otimes \det (L_p/K_p)$$
having R-charge 1. We take the exact sequence
\[ 0 \to \pr_{23}^* \cO(-1,1)\otimes \det S\otimes \det (L_p/K_p)^{-1}[-1] \longrightarrow \cO_{\tilde\Sigma_p} \longrightarrow \cO_{\tilde\Gamma_p} \to 0 \]
and apply $\RDerived\pr_{1*}$ to get
\[ \RDerived\pr_{1*} \pr_{23}^* \cO(-1,1) = I_{\Gamma_p/\Sigma_p}\otimes \det S^{-1}\otimes \det (L_p/K_p)[1]. \]

Next, take the exact sequence of bundles on $\P S \times \P(V/L_p)$
\[ 0 \to \cO(-1,1) \to \cO(0,1) \otimes S \to \cO(1,1) \otimes \det S \to 0 \]
and apply $\RDerived\pr_{1*} \pr_{23}^*$ to get an exact triangle on $\Hom(S,V)$:
\[ I_{\Gamma_p/\Sigma_p}\otimes \det S^{-1}\otimes \det(L_p/K_p)[1] \longrightarrow \RDerived\pr_{1*} \pr_3^* \cO(1) \otimes S \longrightarrow 
I_{\maxiso_p/\Sigma_p}[-1], \]
where $\pr_3: \tilde\Sigma_p \to \P(V/L_p)$ is projection onto the third component.  Thus the claim \eqref{eq:equivofidealsheaves} reduces to the claim that $\RDerived\pr_{1*} \pr_3^* \cO(1)$ is a contractible curved dg-sheaf.

To prove this last claim, take the exact sequence of sheaves on $\P(V/L_p)$
\[ 0 \to \cO \to \cO(1) \to \cO_{H_0/L_p} \to 0, \]
where $\cO_{H_0/L_p}$ is the skyscraper sheaf at some point $H_0/L_p \in \P(V/L_p)$, and apply $\RDerived\pr_{1*} \pr_3^*$.  Observe that $\pr_3$ is flat since $\pr_{23}$ is.   We get an exact triangle on $\Hom(S,V)$:
\[ \cO_{\Sigma_p} \to \RDerived\pr_{1*} \pr_3^* \cO(1) \to \RDerived \pr_{1*} \pr_3^* \cO_{H_0/L_p}. \]
We know that the first term is trivial in $\D(\stackFibre, W_p)$, so to show that the second is trivial it is enough to show that the third is.  Analyzing fibres again we find that $\RDerived\pr_{1*} \pr_3^* \cO_{H_0/L_p}$ is the structure sheaf of the locus 
$$\setconds{x \in \Hom(S,V)}{W_p(x) = 0,\, \Im(x) \subset H_0}.$$
 This is the complete intersection of the quadric cut out by $W_p$ with the two hyperplanes, so its structure sheaf is indeed a contractible curved dg-sheaf (by
\S \ref{rem:basicproperties}(\ref{itm:dsing}) again).
\end{proof}

Now suppose we have a family of maximally isotropic subspaces, as in Proposition~\ref{prop:maxisocalculation}.

\begin{cor}
Let $U\subset \P^6$ be a Zariski open set, let $Y' = U\cap Y_2$, and let 
$$j\colon X_2|_Y'\into X_2|_U$$
 denote the inclusion. Assume that  over $Y'$ we have a bundle $L\subset \cO_{Y'}\otimes V$ of maximally isotropic subspaces for $\omega$, and let $M=\Hom(S,L)\subset X_2|_{Y'}$ be the corresponding bundle of maximally isotropic subspaces for $W$.  Then (possibly after shrinking $U$) we have a homotopy-equivalence of curved dg-sheaves
$$\cO_{\Gamma|_{Y'}} \; \simeq \; j_*\!\left(\cO_{\maxiso}\otimes \det S \otimes \det(L/K)^{-1}\right)[-1] $$
in  $\QCoh_{dg}(X_2|_U, W)$.
\end{cor}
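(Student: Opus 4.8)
The plan is to carry out the proof of Proposition~\ref{prop.rank1vsmaximallyisotropic} in families over the base $Y'$, and then transport the resulting equivalence along $j$. First note that $W$ is not identically zero on $X_2|_{Y'}$, so $(X_2|_{Y'},W)$ is itself a Landau--Ginzburg B-model, and since $j$ is a $\C^*_R$-equivariant closed immersion, pushforward is an exact dg-functor $j_*\colon \QCohdg(X_2|_{Y'},W)\to\QCohdg(X_2|_U,W)$ which preserves homotopy equivalences. Both $\Gamma|_{Y'}$ and $M=\Hom(S,L)$ are supported over $Y'$, so it is enough to produce a homotopy equivalence $\cO_{\Gamma|_{Y'}}\simeq\cO_\maxiso\otimes\det S\otimes\det(L/K)^{-1}[-1]$ in $\QCohdg(X_2|_{Y'},W)$. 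Working over $Y'$ rather than over $U$ is essential here: the resolution of singularities used below relies on $L_p=L_p^\perp$, which requires $\omega_p$ to have rank exactly $4$, and this fails for $[p]\in U\setminus Y_2$.

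To prove the equivalence over $Y'$ I would reproduce the argument of Proposition~\ref{prop.rank1vsmaximallyisotropic} almost verbatim, now letting $[p]$ vary over $Y'$. Let $\Sigma\subset X_2|_{Y'}$ be cut out by $W$ together with the determinant $g$ of $S\xrightarrow{x}V\to V/L$; by the fibrewise complete-intersection statement in the proof of Proposition~\ref{prop.rank1vsmaximallyisotropic}, $g$ is a non-zero-divisor on $\cO_{\{W=0\}}$, so $\cO_\Sigma$ is the restriction to $\{W=0\}$ of the two-term perfect complex $[\det S\otimes\det(V/L)^{-1}\xrightarrow{g}\cO]$ and is hence contractible by \S\ref{rem:basicproperties}(\ref{itm:dsing}); one checks as before that $\Gamma|_{Y'}\subset\Sigma$ and $M\subset\Sigma$. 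Next form the relative versions $\tilde\Sigma$, $\tilde M$, $\tilde\Gamma$ of the resolution, replacing $\P S\times\P(V/L_p)$ by the $\P^1\times\P^1$-bundle $\P S\times_{Y'}\P(V/L)$ over the smooth base $Y'$; the projection $\pr_{23}$ realizes all three as vector bundles over this $\P^1\times\P^1$-bundle, so they are smooth, and $\pr_1$ restricts over each $[p]\in Y'$ to the resolution analyzed in the proposition, with the same three strata governed by $\dim(\Im x+K_p)$. Cohomology and base change then give $\RDerived\pr_{1*}\cO_{\tilde\Sigma}=\cO_\Sigma$, $\RDerived\pr_{1*}\cO_{\tilde M}=\cO_\maxiso$ and $\RDerived\pr_{1*}\cO_{\tilde\Gamma}=\cO_{\Gamma|_{Y'}}$. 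The three short exact sequences of the proposition globalize to sequences of $\pr_{23}$-pullbacks of the relative bundles $\cO(a,b)$, $S$, $\det S$ and $\det(L/K)$; pushing them forward yields the exact triangle relating $I_{\Gamma|_{Y'}/\Sigma}$, $\RDerived\pr_{1*}\pr_3^*\cO(1)\otimes S$ and $I_{M/\Sigma}$, so that the claim reduces, exactly as in the proposition, to the contractibility of $\RDerived\pr_{1*}\pr_3^*\cO(1)$. For this last point I would shrink $U$ so that the $\P^1$-bundle $\P(V/L)\to Y'$ admits a section, i.e.\ so that there is a rank-$6$ subbundle $L\subset H_0\subset\cO_{Y'}\otimes V$, and run the final argument of the proposition: from $0\to\cO\to\cO(1)\to\cO_{H_0/L}\to0$ and the identification of $\RDerived\pr_{1*}\pr_3^*\cO_{H_0/L}$ with the structure sheaf of $\{W_p(x)=0,\ \Im x\subset H_0\}$ --- again the restriction to $\{W=0\}$ of a Koszul complex, hence contractible --- one concludes as before. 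Tracking the twists and R-charge shifts as in Proposition~\ref{prop.rank1vsmaximallyisotropic} then gives the equivalence in $\QCohdg(X_2|_{Y'},W)$, and applying $j_*$ finishes the proof.

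The only genuine work is the passage from ``fibrewise'' to ``in families'': the smoothness of $\tilde\Sigma$, $\tilde M$, $\tilde\Gamma$, the structure of the fibres of $\pr_1$, and the derived-pushforward identities are all proved pointwise in Proposition~\ref{prop.rank1vsmaximallyisotropic}, and I expect the main effort to be a clean cohomology-and-base-change argument showing that each of these upgrades to a statement over the family $Y'$ --- which becomes routine once one has, after possibly shrinking $U$, the relative hyperplane $L\subset H_0$. Everything else is a direct transcription of the proposition.
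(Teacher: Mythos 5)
Your proposal is correct and follows essentially the same route as the paper: the paper's proof likewise shrinks $U$, runs the argument of Proposition~\ref{prop.rank1vsmaximallyisotropic} in families over $Y'$ (noting, exactly as you do, that the only new ingredient is a bundle $H_0$ of co-isotropics containing $L$, available after shrinking $U$), and then applies $j_*$. Your write-up merely spells out the base-change and contractibility details that the paper leaves implicit.
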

\begin{proof}
Assume $U$ is small enough that $X_2|_U \cong \cF\times U$, so $X_2|_{Y'} = \cF\times Y'$. Then the proof of Proposition~\ref{prop.rank1vsmaximallyisotropic} works perfectly well over the base $Y'$. The only point to note is that we also need to pick a bundle $H_0$ of co-isotropics containing $L$: we can certainly do this if $U$ is small enough. Consequently these objects are equivalent as curved dg-sheaves on $(X_2|_{Y'}, W)$, and applying the functor $j_*$ we deduce the statement of the corollary.
\end{proof}

So $\cO_\Gamma$ gives us a global version of our generating object, but in local patches we can continue to work with maximally isotropic subspaces.

\begin{rem}
\emph{A priori} it might seem simpler to just use $\cO_\Gamma$, and ignore the maximally isotropics entirely. Unfortunately, since $\Gamma$ is singular, it is prohibitively difficult to do the calculation of Proposition~\ref{prop:maxisocalculation} directly for $\cO_\Gamma$.
\end{rem}

\subsection{Completing the proof} \label{sec:finale}

For every $[p]\in Y_2$, we have a curved dg-sheaf $\cO_{\Gamma_p}$ supported on the fibre over $p$, and these fit into a global family $\cO_\Gamma$. We want to consider a functor
 $$F\colon \D(Y_2) \to \D(X_2, W)$$
which sends each $\cO_{[p]}$ to the corresponding $\cO_{\Gamma_p}$, i.e.\ the functor which has $\cO_\Gamma$ as its Fourier--Mukai kernel.  From the results in the previous two sections we know that each $\cO_{\Gamma_p}$ behaves like the corresponding skyscraper sheaf $\cO_{[p]}\in \D(Y_2)$, and this more-or-less guarantees that $F$ will be an embedding (cf.~Remark~\ref{rem:embeddings}). In this section we fill in the remaining details in this argument and then show that the image of this embedding is exactly the category $\Br(X_2, W)$ from Section~\ref{sect:windows}.

First we give the definition of $F$ in full. We consider the diagram
$$\begin{tikzcd}  \Gamma \arrow{r}{i} \arrow{dr}[swap]{\hat{\pi}}  &     X_2|_{Y_2} \arrow{r}{j} \arrow{d}{\pi}    &    X_2 \arrow{d}{\pi} \\
               &       Y_2      \arrow{r}[swap]{j}           &      \P^6
\end{tikzcd} $$
and define
$$ F := j_* i_*\hat{\pi}^* \colon \D(Y_2) \to [\QCohdg(X_2, W)] $$
where $[\QCohdg(X_2,W)]$ denotes the homotopy category of $\QCohdg(X_2, W)$. Since $\Gamma$ is flat over $Y_2$, we see that $F$ sends $\cO_{[p]}$ to $\cO_{\Gamma_p}$, and it sends $\cO_{Y_2}$ to $\cO_\Gamma$. 

 We claim that $F$ lands in the subcategory $\D(X_2, W) \subset  [\QCohdg(X_2, W)]$, \emph{i.e.} everything in the image is homotopy-equivalent to a matrix factorization.  More importantly, $F$  in fact lands in the `window' subcategory
$$\Br(X_2, W) \subset \D(X_2, W).  $$
Recall from Section~\ref{sect:windows} that this is the subcategory
where we only allow (objects homotopy-equivalent to) matrix factorizations built out of a certain set of vector bundles, namely the ones corresponding to the `rectangle' \eqref{eq:rectangularcollection} in the irreducible representations of $\GL(S)$.

\begin{prop}\label{prop.Flandsinwindow} For all $\cE\in \D(Y_2)$, we have $F\cE \in \Br(X_2, W)$.
\end{prop}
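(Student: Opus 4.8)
The plan is to reduce the statement to a concrete resolution problem on $X_2$ and then solve it by combining an Eagon--Northcott resolution of $\cO_\Gamma$ in the fibre direction of $\pi$ with a Beilinson resolution on $\P^6$. First I would reduce to a generating set: since $F$ is triangulated and $\Br(X_2,W)$ is closed under shifts and cones, it is enough to check $F\cE\in\Br(X_2,W)$ when $\cE$ runs over a strong generating set of $\D(Y_2)$, for instance $\{\cO_{Y_2}(n)\}_{n\in\Z}$. For $\cE=\cO_{Y_2}(n)$ the object $F\cE=j_*i_*\hat\pi^*\cO_{Y_2}(n)$ is, by the projection formula, the genuine coherent sheaf $\cO_\Gamma\otimes\pi^*\cO_{\P^6}(n)=:\cO_\Gamma(n)$ on $X_2$ (here one uses that $\hat\pi$ is flat and that $i,j$ are closed immersions), and it is a curved dg-sheaf on $(X_2,W)$ with zero differential because $W$ vanishes along $\Gamma$. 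Applying Lemma~\ref{lem:resolvingintowindows} on $(X_2,W)$, with the $E_i$ taken to be the window bundles $\iota_2^*T_{l,m}=\Sym^l S^\vee(m)$, $l\in[0,3)$, $m\in[0,7)$ --- which have no higher self-$\Ext$ on $X_2$ by Lemma~\ref{lem.fullyfaithful} --- I am reduced to producing a finite $\C^*_R$-equivariant resolution of $\cO_\Gamma(n)$ by these window bundles.

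To build such a resolution, observe that over $X_2|_{Y_2}$ the tautological $x$, composed with $\cO\otimes V\to\pi^*(V/K)$, is a morphism of bundles $\bar x\colon S\to\pi^*(V/K)$, and by Definition~\ref{defn:Gamma} the substack $\Gamma$ is exactly its rank-$\le 1$ degeneracy locus. Using that $\Gamma$ is flat over $Y_2$ with fibres of the expected dimension, one checks that $\Gamma$ has the expected codimension $3$ in $X_2|_{Y_2}$, so the Eagon--Northcott complex of $\bar x$ resolves $\cO_\Gamma$ there. Its terms are $\mathrm{EN}_0=\cO$ and, for $i=1,2,3$,
\[ \mathrm{EN}_i=\pi^*\!\big(\wedge^{i+1}(V/K)^\vee\big)\otimes\Sym^{i-1}S^\vee\otimes\wedge^2 S, \]
so the only $\GL(S)$-representations appearing are the trivial one and $\Sym^l S^\vee\otimes(\det S^\vee)^{-1}$ for $l\in\{0,1,2\}$. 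Since $\wedge^2 S=\cO_{\P^6}(-1)$ is pulled back from $\P^6$, I can absorb it into the base factors; twisting by $\pi^*\cO_{\P^6}(n)$, applying the exact functor $j_*$, and moving $\Sym^l S^\vee$ out past $j_*$ by the projection formula and flat base change along $\pi$, I get that $\cO_\Gamma(n)$ is quasi-isomorphic to a finite complex with terms $\pi^*(j_*B_i)\otimes\Sym^{l_i}S^\vee$, $l_i\in\{0,1,2\}$, where each $B_i$ is a coherent sheaf on $Y_2$.

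Finally I would trim the base factors into the window. Each $j_*B_i$ lies in $\D(\P^6)$, and by Beilinson's theorem the line bundles $\cO_{\P^6}(0),\dots,\cO_{\P^6}(6)$ strongly generate $\D(\P^6)$; hence every object of $\D(\P^6)$, no matter the twist $n$, has a finite resolution by copies of $\cO_{\P^6}(m)$ with $m\in\{0,\dots,6\}$. Pulling such a resolution back along $\pi$ and tensoring with $\Sym^{l_i}S^\vee$ turns it into a resolution by $\Sym^{l_i}S^\vee(m)=\iota_2^*T_{l_i,m}$, $l_i\in[0,3)$, $m\in[0,7)$; splicing these into the Eagon--Northcott complex produces the desired finite $\C^*_R$-equivariant resolution of $\cO_\Gamma(n)$ by window bundles. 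All functors used here are $\C^*_R$-equivariant, so only R-charge shifts need to be tracked, and these are harmless by Lemma~\ref{lem:resolvingintowindows}, which then finishes the argument.

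The hard part is the numerology that makes the two resolutions fit together inside the rectangle: it is precisely the coincidence that $\bar x$ is a $2\times 4$ map whose degeneracy locus has expected codimension $3$ --- so its Eagon--Northcott terms involve $\Sym^l S^\vee$ only for $l\le 2$ --- together with the fact that $\dim\P^6+1=7$ equals the width of the window --- so the Beilinson resolution stays within $m\in[0,7)$ however $\cO_\Gamma$ is twisted. The steps needing genuine care are verifying that $\Gamma$ really is a determinantal variety of the expected codimension, so that Eagon--Northcott applies rather than a longer resolution, and keeping track of the $\wedge^2 S$ twist and the $\C^*_R$-grading so that every term really lands in $l\in[0,3)$, $m\in[0,7)$.
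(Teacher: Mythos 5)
Your route is the paper's own: write $F\cE$ as the sheaf $\cO_\Gamma\otimes\pi^*\cE$, resolve $\cO_\Gamma$ by the Eagon--Northcott complex of $\bar x\colon S\to \pi^*(V/K)$ (whose fibre-direction factors involve only $\Sym^l S^\vee$ with $l\le 2$, up to powers of $\det S$ pulled back from $\P^6$), resolve the resulting base factors by line bundles $\cO_{\P^6}(m)$ in a width-seven range, and then invoke Lemma~\ref{lem:resolvingintowindows}. The one place where you genuinely diverge is your opening reduction, and it is the step that fails as stated. The line bundles $\set{\cO_{Y_2}(n)}_{n\in\Z}$ do \emph{not} generate $\D(Y_2)$ under shifts and cones alone: any object of that cone-closure has $K_0$-class in the subgroup generated by the classes $[\cO_{Y_2}(n)]$, and the class of a point is not in that subgroup --- comparing the degree-three parts of Chern characters, $[\cO_{pt}]=\sum a_n[\cO_{Y_2}(n)]$ would force $(\deg Y_2)\sum a_n n^3=6$, impossible since $\deg Y_2=14$. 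The line bundles are of course a classical generator once direct summands are allowed, but then your reduction needs $\Br(X_2,W)$ to be closed under summands inside $\D(X_2,W)$, which is precisely what the paper's notion of generation (shifts and cones, no summands) does not give and which you do not establish; one could extract it a posteriori from the idempotent completeness of $\D(Y_1)\simeq\Br(X_2,W)$, but that appeals to $\Psi_1$ and $\Psi_2$ and is not in your write-up.

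The fix is to drop the reduction to line bundles altogether and reduce, as the paper does, to $\cE$ an arbitrary coherent sheaf on $Y_2$ (every object of $\D(Y_2)$ is a finite complex of such, and the window is closed under shifts and cones). Your Eagon--Northcott-plus-Beilinson argument then goes through unchanged, because the Beilinson step is applied to the base factors $B_i$ --- which are arbitrary sheaves supported on $Y_2\subset\P^6$ in any case --- and never uses that $\cE$ is a line bundle. Two minor remarks: your Eagon--Northcott terms are off by powers of $\det S$ (the $i$-th term carries $\Sym^{i-1}S\otimes\det S\cong \Sym^{i-1}S^\vee\otimes(\det S)^{i}$ rather than a single factor of $\wedge^2 S$), but since these twists are pulled back from $\P^6$ and you already shift the Beilinson range factor by factor, this is harmless; and that per-factor choice of range is a bookkeeping point the paper glosses over, so your attention to it is welcome rather than a deviation.
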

\begin{proof}
It's enough to prove the statement when $\cE$ is a sheaf. In that case $F\cE$ is a sheaf on $X_2$, which we can write as
$$F\cE  = j_*i_*\hat{\pi}^*\cE = j_*(\cO_\Gamma\otimes \pi^*\cE).$$

The sheaf $\cO_\Gamma$ on $X_2|_{Y_2}$ has an Eagon--Northcott resolution (e.g.\ \cite[\S 6.1.6]{Weyman})
$$ 0 \to \wedge^4(V/K)^\vee\otimes \Sym^2 S(1) \to  \wedge^3(V/K)^\vee\otimes S(1)
\to  \wedge^2(V/K)^\vee(1) \to \cO \to \cO_\Gamma \to 0, $$
and we can make this $\C^*_R$-equivariant by inserting the necessary shifts.
Consequently, the sheaf $F\cE$ has a $\C^*_R$-equivariant resolution on $X_2$ of the form 
$$0 \to \pi^*\cF_3\otimes \Sym^2 S \to \pi^*\cF_2\otimes S \to \pi^*\cF_1 \to \pi^*\cF_0 \to F\cE \to 0 $$
where $\cF_0, \ldots, \cF_3$ are sheaves on $\P^6$, supported on $Y_2$. Every sheaf on $\P^6$ can be resolved by the line bundles $\cO, \ldots, \cO(6)$, so $F\cE$ has a resolution by vector bundles lying in our rectangle \eqref{eq:rectangularcollection}.
 Now Lemma~\ref{lem:resolvingintowindows} implies that $F\cE$ lies in the subcategory $\Br(X_2, W)\subset  \D(X_2, W)$.
\end{proof}

Next we need to establish that $F$ has a right adjoint. The functor
$$ (ji)_*\colon \D(\Gamma) \to \D(X_2, W) $$
has a right adjoint, namely
$$(ji)^!=\RDerived\hom_{X_2}(\cO_\Gamma, -) \colon \D(X_2, W) \to \D(\Gamma).$$
Note that this statement is local, and that locally $\cO_\Gamma$ generates $\D(\Gamma)$, so to prove the adjunction it's sufficient to observe that
$$\RDerived\hom_{X_2}((ji)_* \cO_\Gamma, \cF) = \RDerived\hom_\Gamma(\cO_\Gamma, \RDerived\hom_{X_2}(\cO_\Gamma, \cF) ) $$
holds tautologically for any $\cF\in \D(X_2, W)$. 

The right adjoint to the functor
$$\hat{\pi}^*\colon \D(Y_2) \to \D(\Gamma) $$
should be $\hat{\pi}_*$, but unfortunately $\Gamma$ is not proper (not even equivariantly), so $\hat{\pi}_*$ produces quasi-coherent sheaves in general. Fortunately, we have the following.

\begin{lem}\label{lem.adjointexists} 
 For $\cE\in \D(X_2, W)$, the complex of sheaves $\pi_*\RDerived\hom_{X_2}(\cO_\Gamma, \cE)$ has bounded and coherent homology sheaves. Consequently
$$ F^R:= \pi_*\RDerived\hom_{X_2}(\cO_\Gamma, -) \colon \D(X_2, W) \to \D(Y_2) $$
is right adjoint to $F$.
\end{lem}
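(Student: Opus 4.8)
The plan is to split the statement into a formal part --- that the displayed functor is a right adjoint between the ambient homotopy categories of quasi-coherent curved dg-sheaves --- and a finiteness part --- that it actually takes values in $\D(Y_2)$ --- with essentially all of the work going into the second.

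\emph{The formal adjunction.} By definition $F = (ji)_*\circ\hat\pi^*$; we have just seen that $(ji)_*$ has right adjoint $(ji)^! = \RDerived\hom_{X_2}(\cO_\Gamma,-)$, and the pullback $\hat\pi^*$ (as a functor on quasi-coherent curved dg-sheaves) always has a right adjoint $\hat\pi_*$, no properness required. Since $\pi\circ j\circ i = j\circ\hat\pi$, and since a quasi-coherent sheaf on $Y_2$ may be identified with its pushforward along the closed immersion $j\colon Y_2\into\P^6$, the composite $\hat\pi_*\circ(ji)^!$ is precisely $\pi_*\RDerived\hom_{X_2}(\cO_\Gamma,-)=F^R$. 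Thus $F^R$ is right adjoint to $F$ at the level of $\QCohdg$ homotopy categories, and --- since both $\D(Y_2)$ and $\D(X_2,W)$ sit as full subcategories --- it restricts to a right adjoint of $F\colon\D(Y_2)\to\D(X_2,W)$ the moment we know $F^R$ maps $\D(X_2,W)$ into $\D(Y_2)$. So the content of the lemma is exactly the stated boundedness and coherence.

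\emph{The finiteness.} Fix $\cE\in\D(X_2,W)$. As $X_2$ is smooth, $\cO_\Gamma$ has finite Tor-dimension, so $\RDerived\hom_{X_2}(\cO_\Gamma,\cE)$ is represented by a bounded complex of coherent sheaves whose cohomology sheaves are $\cO_\Gamma$-modules (multiplication by a local equation of $\Gamma$ is $\RDerived\hom$ applied to the zero endomorphism of $\cO_\Gamma$, hence zero). By \S\ref{rem:basicproperties}(\ref{itm:support}) these cohomology sheaves are in addition acyclic away from $\Crit(W)$, so they are supported on $\Gamma\cap\Crit(W)$, which is $\Crit(W)|_{Y_2}$ by Proposition~\ref{prop.Pfcritical} and whose underlying scheme is $Y_2$ by Lemma~\ref{lem.coarsemodulispaces}. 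It remains to show $\pi_*$ of such a complex is bounded with coherent cohomology, which may be checked over a small enough affine open $U\subset\P^6$, where $X_2|_U\cong\quotstackbig{\Hom(S,V)\times U}{\SL(S)}$. There $\pi_*$ is the functor of $\SL(S)$-invariants, which is exact by linear reductivity; hence $\RDerived\pi_*$ of the complex has cohomology sheaves $\pi_*\mathcal{H}^i$, and it suffices that each $\pi_*\mathcal{H}^i$ be coherent over $\mathcal{O}_U$. Now $\mathcal{H}^i$ is a finitely generated $\SL(S)$-equivariant module over $R:=\cO(\Hom(S,V))\otimes\mathcal{O}_U$ supported on $\Crit(W)|_{Y_2}$; by the module form of finite generation of invariants for reductive groups its invariants are finitely generated over the coordinate ring of the underlying scheme of $\Crit(W)|_{Y_2}$, i.e.\ over $\cO(Y_2\cap U)$ (Lemma~\ref{lem.coarsemodulispaces}), which is finite over $\mathcal{O}_U$ because $Y_2$ is projective. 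Hence $\pi_*\mathcal{H}^i$ is coherent over $\mathcal{O}_U$ and supported on $Y_2$, so $F^R\cE\in\D(Y_2)$ globally.

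\emph{The main obstacle.} The crux is this last finiteness: $\pi$ is far from proper --- its fibres are the non-compact stacks $\stackFibre$ --- so pushing an honest coherent sheaf forward along it would in general leave the coherent world. What rescues the argument is the interplay of two facts: linear reductivity of $\SL(S)$ (keeping $\pi_*$ exact and making invariant submodules over $R$ finite over the invariant ring), and the geometric input, forced by Proposition~\ref{prop.Pfcritical}, that the cohomology which must be pushed forward is supported on $\Crit(W)|_{Y_2}$, whose coarse/underlying scheme $Y_2$ \emph{is} proper over $\P^6$ (Lemma~\ref{lem.coarsemodulispaces}). The only other point needing care is the $\cO_\Gamma$-linearity of the cohomology sheaves of $\RDerived\hom_{X_2}(\cO_\Gamma,\cE)$, which is what legitimises restricting their support to $\Gamma$ before intersecting with $\Crit(W)$.
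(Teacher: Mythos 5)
Your argument is correct and is essentially the paper's own: after the formal adjunction, both proofs reduce to coherence of the pushforward by noting that the homology of $\RDerived\hom_{X_2}(\cO_\Gamma,\cE)$ is coherent and supported on $\Crit(W)|_{Y_2}$, that over a trivializing affine $U\subset\P^6$ the functor $\pi_*$ is the exact functor of $\SL(S)$-invariants, and that Lemma~\ref{lem.coarsemodulispaces} identifies the underlying scheme of $\Crit(W)|_{Y_2}$ with $Y_2$, so invariants land in coherent $\cO_{Y_2}$-modules. Your explicit appeal to the module form of Hilbert's finiteness theorem (applied, strictly speaking, to a nilpotent thickening of $\Crit(W)|_{Y_2}$ rather than to $Y_2\cap U$ itself) just spells out what the paper compresses into ``taking $\SL(S)$-invariants of a finite presentation''.
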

\begin{proof}
The homology of $\RDerived\hom_{X_2}(\cO_\Gamma, \cE)$ is a coherent sheaf whose support lies in the critical locus $\Crit(W)$ of $W$ (see \S \ref{rem:basicproperties}(\ref{itm:support})), and also in $\pi^{-1}(Y_2)$. From Lemma~\ref{lem.coarsemodulispaces}, the map
$$\pi\colon \Crit(W)|_{Y_2} \to Y_2 $$
is just passage to the underlying scheme, so if $\cF$ is a sheaf supported in $\Crit(W)$ then (locally in $Y_2$) we calculate $\pi_*\cF$ by just taking $SL(S)$-invariants.  If $Y'\subset Y_2$ is a sufficiently small open set, then over $\pi^{-1}(Y')$ we can find a finite presentation of the homology of $\RDerived\hom_{X_2}(\cO_\Gamma, \cE)$. Taking $\SL(S)$-invariants, which is an exact functor, gives us a finite presentation of the homology of $\pi_*\RDerived\hom_{X_2}(\cO_\Gamma, \cE)$.
\end{proof}

From their definitions, both $F$ and $F^R$ are `local' over $\P^6$, i.e.\ linear over the structure sheaf of $\P^6$. Consequently if $\opensetinP \subset \P^6$ is an open set then we have functors
$$\begin{tikzcd} \D(Y_2\cap \opensetinP) \arrow[transform canvas={yshift=0.4ex}]{r}{F} & \D(X_2|_{\opensetinP}, W). \arrow[transform canvas={yshift=-0.4ex}]{l}{F^R}\end{tikzcd}$$

\begin{prop}\label{prop.Fanembedding}
The functor $F$ is fully faithful.
\end{prop}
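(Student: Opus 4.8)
The plan is to use the right adjoint $F^R$ from Lemma~\ref{lem.adjointexists}: a functor admitting a right adjoint is fully faithful exactly when the unit $\eta\colon\id_{\D(Y_2)}\To F^RF$ is a natural isomorphism. Since $F^RF$ is triangulated, the objects $\cE\in\D(Y_2)$ on which $\eta_\cE$ is an isomorphism form a thick triangulated subcategory, so it is enough to verify $\eta$ on a generating set. As noted just before the statement, $F$ and $F^R$ are linear over $\cO_{\P^6}$, hence compatible with restriction to opens of $\P^6$; I would therefore cover $\P^6$ by affine opens $U$ small enough that $Y':=Y_2\cap U$ carries a bundle $L\subset\cO_{Y'}\otimes V$ of maximally isotropic subspaces for the $2$-forms $\twoform{p}$ (available by Remark~\ref{rem.localisotropicbundle}), that $X_2|_U\cong\stackFibre\times U$, and that the Corollary at the end of Section~\ref{sect:anotherconstruction} holds over $U$. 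Since $Y'$ is smooth and affine, $\D(Y')$ is generated by $\cO_{Y'}$ alone under shifts and cones, so the whole proposition reduces to checking that $\eta_{\cO_{Y'}}\colon\cO_{Y'}\To F^RF\,\cO_{Y'}$ is a quasi-isomorphism.

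For this reduced statement, observe that $F\,\cO_{Y'}=\cO_{\Gamma|_{Y'}}$ and, unwinding $F^R$, that $F^RF\,\cO_{Y'}=\pi_*\RDerived\hom_{X_2|_U}(\cO_{\Gamma|_{Y'}},\cO_{\Gamma|_{Y'}})$, with $\eta_{\cO_{Y'}}$ the map corresponding under adjunction to $\id_{\cO_{\Gamma|_{Y'}}}$, hence nonzero at every point of $Y'$. By the Corollary of Section~\ref{sect:anotherconstruction}, $\cO_{\Gamma|_{Y'}}$ is homotopy-equivalent to $j_*(\cO_{\maxiso}\otimes\det S\otimes\det(L/K)^{-1})[-1]$; since tensoring both arguments of $\RDerived\hom$ by the same line bundle and applying a common shift leaves it unchanged, $F^RF\,\cO_{Y'}\cong\pi_*\RDerived\hom_{X_2|_U}(\cO_{\maxiso},\cO_{\maxiso})$. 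Proposition~\ref{prop:maxisocalculation} then identifies this with $\cO_{Y'}$ placed in degree $0$, so $\eta_{\cO_{Y'}}$ becomes an $\cO_{Y'}$-linear endomorphism of $\cO_{Y'}$ that is nowhere zero, i.e.\ multiplication by a nowhere-vanishing regular function, hence an isomorphism. Running $U$ over the cover then gives that $\eta$ is an isomorphism on all of $\D(Y_2)$.

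I expect the only point requiring care to be the identification of the abstract unit $\eta_{\cO_{Y'}}$ with the concrete ``natural map'' treated in Proposition~\ref{prop:maxisocalculation}: both are $\cO_{Y'}$-module maps into $\pi_*\RDerived\hom_{X_2|_U}(\cO_{\maxiso},\cO_{\maxiso})$ and both are nonzero at every point, so once that complex is known to have $H^0=\cO_{Y'}$ and no higher cohomology (exactly the content of Proposition~\ref{prop:maxisocalculation}) the two maps differ by a unit and nothing further is needed. One must also note that the line-bundle factor $\det S\otimes\det(L/K)^{-1}$ and the shift $[-1]$ appearing in the Corollary are harmless for a $\RDerived\hom$ of an object with itself, which is immediate. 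There is no genuine obstacle: all of the geometric work has already been absorbed into Propositions~\ref{prop.rank1vsmaximallyisotropic} and \ref{prop:maxisocalculation}, and what remains is the local-to-global reduction, the reduction to the generator $\cO_{Y'}$, and this bookkeeping.
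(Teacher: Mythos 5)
Your proposal is correct and follows essentially the same route as the paper: show the unit of the adjunction with $F^R$ is an isomorphism, reduce by $\cO_{\P^6}$-linearity to a small affine $U$ and to the generator $\cO_{Y'}$, swap $\cO_{\Gamma|_{Y'}}$ for $\cO_\maxiso$ (up to the harmless twist and shift) via Proposition~\ref{prop.rank1vsmaximallyisotropic}, invoke Proposition~\ref{prop:maxisocalculation}, and conclude since the unit sends $1$ to the identity arrow and is hence nowhere vanishing. The paper's proof is exactly this, so no further comment is needed.
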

\begin{proof}
We will show that for any $\cE\in \D(Y_2)$, the unit of the adjunction
$$\cE\to F^RF\cE$$
 is an isomorphism. Then the composition $F^RF$ is naturally isomorphic to the identity functor, and so $F$ must be an embedding.

 This statement is local in $Y_2$, so we can restrict to an affine open subset $\opensetinP\subset \P^6$ and corresponding open set $\opensetinY = Y_2\cap\opensetinP$. Then it's enough to check the statement on the structure sheaf $\cO_{\opensetinY}$, since this generates $\D(\opensetinY)$. So the required statement is that
$$F\colon \cO_{\opensetinY} \to \pi_*\RDerived\hom_{X_2|_U}(\cO_{\Gamma_{\opensetinY}}, \cO_{\Gamma_{\opensetinY}}) $$
is a quasi-isomorphism. 

By making $\opensetinP$ smaller if necessary, we may assume that we have a bundle of maximally isotropics $L \subset V_{\opensetinY}$, and an associated  bundle $\maxiso = \Hom(S,L) \subset X_2|_{\opensetinY}$. Then  by Proposition~\ref{prop.rank1vsmaximallyisotropic} we may replace $\cO_{\Gamma_{\opensetinY}}$ with $\cO_\maxiso$, up to a shift and twisting by a line bundle. Then  
$$\pi_*\RDerived\hom_{X_2|_U}(\cO_{\Gamma_{\opensetinY}}, \cO_{\Gamma_{\opensetinY}}) \cong \cO_{\opensetinY}$$
by Proposition~\ref{prop:maxisocalculation}. Finally, $F$ must be an isomorphism on homology because it must send the constant section 1 to itself (it preserves identity arrows), and it is linear over sections of $\cO_{\P^6}$.
\end{proof}

\begin{thm}\label{thm:Fisanequivalence} The functor
$$F\colon \D(Y_2) \to \Br(X_2, W) $$
is an equivalence.
\end{thm}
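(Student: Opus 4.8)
The plan is to reduce the statement to the three facts already established — that $F$ takes values in $\Br(X_2,W)$ (Proposition~\ref{prop.Flandsinwindow}), that $F$ is fully faithful (Proposition~\ref{prop.Fanembedding}), and that $F$ has a right adjoint $F^R$ (Lemma~\ref{lem.adjointexists}) — together with one soft input about Calabi--Yau categories. Given these, the only thing left to prove is that $F\colon\D(Y_2)\to\Br(X_2,W)$ is essentially surjective.

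First I would note that, $F$ being fully faithful with a right adjoint, its essential image $\Im F$ is a right-admissible subcategory of $\Br(X_2,W)$, so that there is a semiorthogonal decomposition $\Br(X_2,W)=\langle\ker F^R,\,\Im F\rangle$, where $\ker F^R$ is the left orthogonal of $\Im F$. Then I would identify the ambient category: composing the equivalence $\Psi_1$ of Corollary~\ref{cor.psi1} with the equivalence $\Psi_2\colon\D(X_1,W)\isoto\Br(X_2,W)$ built in Section~\ref{sect:windows} yields a triangulated equivalence $\Br(X_2,W)\cong\D(Y_1)$. Since $Y_1$ is a smooth projective Calabi--Yau threefold its canonical bundle is trivial, and this forces $\D(Y_1)$ to admit no non-trivial semiorthogonal decomposition: Serre duality turns any such decomposition into an orthogonal one, and the derived category of a connected scheme is indecomposable. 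As $\D(Y_2)\neq 0$, we have $\Im F\neq 0$, so the displayed decomposition must be trivial, giving $\ker F^R=0$ and hence $\Im F=\Br(X_2,W)$. Combined with full faithfulness, this shows $F$ is an equivalence.

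I do not expect a genuine obstacle in this last step: essentially all the content of $\Psi_3$ lives in the preceding sections — the point-likeness computation for maximally isotropic subspaces (Proposition~\ref{prop:maxisocalculation}), its globalization through the object $\cO_\Gamma$ (Proposition~\ref{prop.rank1vsmaximallyisotropic}), and the full-faithfulness argument (Proposition~\ref{prop.Fanembedding}) — and once these are available, essential surjectivity is purely formal. The one thing I would double-check is that $F$, $F^R$ and the equivalences $\Psi_1,\Psi_2$ really are exact functors of triangulated categories (underlying dg-functors), so that the semiorthogonal-decomposition formalism applies; this has been arranged in the earlier constructions. If one preferred to avoid invoking the indecomposability of $\D(Y_1)$, the alternative would be to exhibit explicit preimages under $F$ for the window bundles $\iota_2^*T_{l,m}$ that generate $\Br(X_2,W)$; this route looks considerably more laborious and I would not follow it.
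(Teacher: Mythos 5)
Your proposal is correct and follows essentially the same route as the paper: the paper's proof likewise combines Proposition~\ref{prop.Flandsinwindow}, Proposition~\ref{prop.Fanembedding} and Lemma~\ref{lem.adjointexists} with the equivalence $\Br(X_2,W)\cong\D(Y_1)$ from $\Psi_1,\Psi_2$, and concludes because the connected Calabi--Yau $Y_1$ admits no non-trivial admissible subcategory. You merely spell out the Serre-duality/indecomposability argument that the paper cites in one line (and note the same dissatisfaction the authors record about having to appeal to $\Psi_1$ and $\Psi_2$).
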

\begin{proof}
By Proposition~\ref{prop.Fanembedding} and Proposition~\ref{prop.Flandsinwindow} we have that $F$ is an embedding from $\D(Y_2)$ into $\Br(X_2, W)$, and by Lemma~\ref{lem.adjointexists} it has a right adjoint. We also know that $\Br(X_2, W)$ is equivalent to $\D(Y_1)$, by composing our equivalences $\Psi_1$ and $\Psi_2$. However $Y_1$ is Calabi--Yau and connected, so $\Br(X_2, W)$ cannot have a non-trivial admissible subcategory, and we deduce the result.
\end{proof}

So the equivalence $\Psi_3$ holds. This last step of the argument is rather unsatisfactory in that we have to appeal to our other two equivalences, rather than giving a self-contained proof. But presumably it is possible to prove directly that $\Br(X_2, W)$  is Calabi--Yau and connected -- in particular the Calabi--Yau property should follow by an argument along the lines of \cite[\S 4]{LP}.

\begin{rem}\label{rem:pfaffianforotherdimensions}

We conclude with some remarks about how our results adapt when we change the dimensions of $S$ and $V$ to $r$ and $d$ respectively.
\begin{itemize}
\item $r=2$, $d=5$. In this case $Y_1$ is an elliptic curve and $Y_2$ is the dual elliptic curve. We have a very similar definition of $\Br(X_2, W)$ (see Remark~\ref{rem:windowsinotherdims}), we have equivalences $\D(Y_1) \cong \D(X_1,W) \cong \Br(X_2,W)$ as before, and the methods of this section can be used to show that $\D(Y_2) \cong \Br(X_2, W)$. In fact this case is rather easier than the $d=7$ case because it's very easy to show that we have a global maximally isotropic subbundle $L$ on $Y_2$, and so we don't need any alternative construction as in Section~\ref{sect:anotherconstruction}.

\item $r=2$, $d=6$. In this case $Y_1$ is a K3 surface and $Y_2$ is a Pfaffian cubic 4-fold. We can define $\Br(X_2, W)$ as in Remark~\ref{rem:windowsinotherdims}, and the arguments of Sections \ref{sect:knorrer} and \ref{sect:windows} show that $\D(Y_1)$ embeds into $\Br(X_2, W)$. The methods of this section apply essentially verbatim to show that $\D(Y_2)$ also embeds into $\Br(X_2, W)$; the only change is that the bundle $K$ over $Y_2$ now has rank 2, and so
$$ \Hom(S, K) / \GL(S)  \cong \Hom(\wedge^2 S, \wedge^2 K)$$
is still a vector bundle, but of rank 1. Fortunately $Y_2$ is now codimension 1 in $\P^5$, and so \eqref{eqn.transversederivativesofW} is still an isomorphism. Unfortunately we do not have a proof that this second embedding is actually an equivalence, \emph{i.e.}~ we have no analogue of Theorem \ref{thm:Fisanequivalence}. If we did, we would recover Kuznetsov's result that $D^b(Y_1)$ embeds into $\D(Y_2)$ \cite[Thm.~2]{KuznetsovHPDlines}.

\item $r=2$, $d=9$. In this case both $Y_1$ and $Y_2$ are smooth Calabi-Yau 5-folds, and we have two-thirds of a proof that they are derived equivalent. As before we have $\D(Y_1)\cong \D(X_1, W)\cong \Br(X_2, W)$, and Proposition \ref{prop:maxisocalculation} still holds, but we do not know how to construct a kernel globally over $Y_2$ (since $\cO_\Gamma$ only works for $d=6$ or $7$).

\item $r=2$, $d = 8$ or $d>9$.  We do have a category $\Br(X_2,W)$, but $Y_2$ is necessarily singular, so our calculations with maximally isotropic subspaces show that $\Br(X_2, W)$ is in some sense a non-commutative resolution of $\D(Y_2)$.  Indeed, we speculate that the homological projective dual to $\Gr(2,V)$ is the non-commutative resolution of the Pfaffian locus $\Pf \subset \P\Hom(\Wedge^2 V, \Wedge^2 S)$ constructed as follows: take the stack
\[ \quotstackBig{ \setcondsbig{(x,\omega) \in \Hom(S,V) \oplus \Hom(\wedge^2 V,\wedge^2 S)}{\omega \ne 0}}{\GL(S)} \]
with the superpotential
\[ W(x,\omega) = \omega \circ \wedge^2 x, \]
and take the subcategory of matrix factorizations built from the vector bundles
\[ \WindowSet{ \Sym^l S^\vee \otimes (\det S^\vee)^m}{\tfrac{1}{2}(d-1)}{\binom{d}{2}} \]
when $d$ is odd, or
\[ \WindowSet{ \Sym^l S^\vee \otimes (\det S^\vee)^m}{\tfrac{1}{2}d}{\binom{d}{2}} \]
when $d$ is even.  This line of inquiry is currently being pursued by Ballard et al.\ \cite{BDFIK}.  Of course one would like to begin by checking that this is equivalent to Kuznetsov's non-commutative resolution of $\Pf$ when $d = 6$ and 7 \cite{KuznetsovHPDlines}.

\item If $r > 2$ then it is not clear to us how to proceed.

\end{itemize}
\end{rem}

\end{document}